\titleformat{\subsubsection}{\Large\scshape\raggedright}{}{0em}{}[\titlerule]
\tikzset{%
  >={Latex[width=2mm,length=2mm]},
  % Specifications for style of nodes:
            base/.style = {rectangle, rounded corners, draw=black,
                           minimum width=4cm, minimum height=1cm,
                           text centered, font=\sffamily},
}
\newtheorem{theo}{Theorem}[section]
\newtheorem{prop}[theo]{Proposition}
\newtheorem{lem}[theo]{Lemma}
\newcommand{\R}{\mathbb{R}}
\newcommand{\C}{\mathbb{C}}
\newcommand{\etalchar}[1]{$^{#1}$}
\title{Micro-packets for real groups of type $\mathrm{G}_{2}$}
\author{ Nicolas Arancibia Robert, Leticia Barchini, Paul Mezo}
\begin{document}

\maketitle

\begin{abstract}
In their study of Arthur's conjectures for real groups, Adams,
Barbasch, and Vogan introduced the notion of
\emph{micro-packets}. Micro-packets are finite sets of irreducible
representations defined using microlocal geometric methods and
characteristic cycles. We explore an action of the Weyl group on
characteristic cycles to compute all micro-packets of real groups of
type $\mathrm{G}_2$. 
\end{abstract}

\tableofcontents

\section{Introduction}

Langlands
introduced \emph{L-packets} for connected real algebraic groups in
\cite{Langlands}.  L-packets are finite sets (of equivalence
classes) of irreducible admissible representations which were motivated
by number-theoretic correspondences.  

Arthur used the trace formula to  explore such correspondences and
was led to conjecture the existence of
complementary packets for some unitary representations
\cite{Arthur84}, \cite{Arthur89}.  These are commonly called \emph{Arthur
packets} or \emph{A-packets}.  Every A-packet contains a specific
L-packet, but not every L-packet is obtained in this manner. A further
conjecture of Arthur's is the existence of a stable distribution, in
the sense of Langlands and Shelstad \cite{Shel82}, which is associated to each
Arthur packet.  
       
In their seminal work \cite{ABV}, Adams, Barbasch and Vogan, gave a
definition of A-packets for connected real reductive groups.  In fact, they
generalized the notion of an A-packet to that of a
\emph{micro-packet}, and gave precise definitions for these too.  They also
defined a stable distribution associated to each micro-packet. Every 
micro-packet contains a specific L-packet 
and all L-packets are obtained in this manner.  In particular, there
is a natural bijection between L-packets
and  micro-packets.

Our principal goal is to explicitly compute the
micro-packets and the associated stable distributions for the real forms of the
complex algebraic group of type $\mathrm{G}_{2}$. Some analogues of
micro-packets for $p$-adic groups of type $\mathrm{G}_{2}$ are computed in
\cite{Cunningham21, Cunningham22}.

To give an outline of how micro-packets are defined, we need to
fix a few objects.   First, we denote the
complex group of type $\mathrm{G}_{2}$ simply by $\mathrm{G}_{2}$, and
a real form of it by
$\mathrm{G}_{2}(\mathbb{R})$.  The real form is either split or
compact \cite[Table  II]{Tits}.  
The Langlands dual group
${^\vee}\mathrm{G}_{2}$ of $\mathrm{G}_{2}$ is isomorphic to
$\mathrm{G}_{2}$ itself.  Nevertheless, it will be helpful in keeping
track of the roots of one group and the coroots of the other to
preserve the distinction in notation.  Fix  Borel subgroups $B
\subset \mathrm{G}_{2}$, ${^\vee}B \subset {^\vee}\mathrm{G}_{2}$ and
maximal tori  $T \subset B$, ${^\vee}T \subset {^\vee}B$.  Denote
the root system of $\mathrm{G}_{2}$ relative to $T$ by
$R(\mathrm{G}_{2},T)$, and the positive 
roots determined by $B$ as $R(B,T)$.  We may identify
$R(\mathrm{G}_{2},T)$ with the roots $R(\mathfrak{g}_{2},
\mathfrak{t})$ of the respective complex Lie algebras, and this
notation carries over to the dual objects in the obvious fashion.  

Fix $\lambda \in {^\vee}\mathfrak{t}$. The element $\lambda$ should be
regarded as the infinitesimal character 
of a representation and we will refer to $\lambda$ as an infinitesimal
character.  An infinitesimal
character is  actually the conjugacy class of a semisimple element.
For this reason the
Weyl group action makes it harmless to 
assume that $\lambda$ is \emph{integrally dominant} with respect to 
${^\vee}B$, \emph{i.e.}
\begin{equation}
\label{intdom}
{^\vee}\alpha(\lambda) \in \mathbb{Z} \Longrightarrow
{^\vee}\alpha(\lambda) \geq 0,  \quad {^\vee}\alpha \in R({^\vee}B,
{^\vee}T).
\end{equation}
This is all we need to proceed with our outline, but the presentation
is greatly simplified with the two additional assumptions that
$\lambda$ is \emph{regular} and \emph{integral}, \emph{i.e.}
\begin{equation}
\label{regint}
{^\vee}\alpha(\lambda) \in \{1,2,3,\ldots\}, \quad {^\vee}\alpha
\in R({^\vee}\mathrm{G}_{2}, {^\vee}T).
\end{equation}
We continue with our outline under assumption (\ref{regint}) and will
say a few words about how the removal of the regularity and
integrality assumptions affects the presentation afterwards.

The framework of \cite{ABV}, specialized to the group
$\mathrm{G}_{2}$, is summarized in 
the following diagram.
\begin{equation}
\label{landscape}
\begin{tikzpicture}[node distance=2.5cm,
    every node/.style={fill=white, font=\sffamily}, align=center]
  % Specification of nodes (position, etc.)
  \node (repG2)             [base]              {Irreducible
    representations of $\mathrm{G}_{2}(\mathbb{R})$};
  \node (geoparam)     [base, below of=repG2]          {Complete
    geometric parameters};
  \node (sheaves)      [base, below of=geoparam]   {Irreducible
    perverse sheaves on ${^\vee}\mathrm{G}_{2}/ \,{^\vee}B$};
  \node (dmod)     [base, below of=sheaves]   {Irreducible D-modules
    on  ${^\vee}\mathrm{G}_{2}/ \,{^\vee}B$};
  \node (repG2dual)      [base, below of=dmod] {Irreducible representations
 of ${^\vee}\mathrm{G}_{2}(\mathbb{R})$};  

% Arrows
\draw[<->]             (repG2) --  (geoparam) node[midway, right]
                                   {Local Langlands correspondence};
  \draw[<->]     (geoparam) --(sheaves)  node[midway, right]
                                   {Intermediate extension};
  \draw[<->]      (sheaves) -- (dmod) node[midway, right]
                                   {Riemann-Hilbert correspondence};
  \draw[< ->]     (dmod) -- (repG2dual) node[midway, right]
                                   {Beilinson-Bernstein
                                     correspondence}; 
 
 \end{tikzpicture}
\end{equation}

It is impossible to do justice to the immense amount of mathematics
that is hinted at in this diagram.  We must be content with a meagre
description, emphasizing those points which appear in our computations
later on.  All arrows in the diagram indicate bijections. 

The top bijection is \cite[Theorem 10.4]{ABV}.  It is an extension of
Langlands' original correspondence \cite{Langlands}, which we
assume the reader understands.  On the one side of the
bijection we have
(infinitesimal equivalence classes of) irreducible admissible
representations of both the split and compact forms of
$\mathrm{G}_{2}$.  On the other side of the bijection we have
\emph{complete geometric parameters} \cite[Definition 7.6]{ABV}, whose
description requires more effort.  The underlying geometric object is
the flag variety ${^\vee}\mathrm{G}_{2}/ \, {^\vee}B$
\cite[Proposition 6.16]{ABV}.  The flag
variety is acted upon by a symmetric  subgroup ${^\vee}K \subset
{^\vee}\mathrm{G}_{2}$, and each ${^\vee}K$-orbit $S \subset
{^\vee}\mathrm{G}_{2}/ \, {^\vee}B$ corresponds to a
unique (equivalence class of an) L-parameter $\varphi$
\cite[Proposition 6.17]{ABV}.  Conversely, every L-parameter (for the
split form) corresponds to a unique ${^\vee}K$-orbit, for an
appropriate choice of ${^\vee}K$.  This yields a correspondence $\varphi
\leftrightarrow S$
between L-parameters and symmetric orbits on the flag variety. The
correspondence is enhanced by including an irreducible
${^\vee}K$-equivariant local system
$\mathcal{V}$ of complex vector spaces on $S$.  The monodromy
representation of $\mathcal{V}$ is an irreducible representation of the
Langlands component group for $\varphi$ \cite[Lemma 7.5]{ABV}.  A
complete geometric parameter is  a pair $\xi = (S, \mathcal{V})$,
and the enhanced local Langlands correspondence carries $\xi$ to a unique
(infinitesimal equivalence class of an) irreducible admissible
representation $\pi(\xi)$ of a real form of $\mathrm{G}_{2}$.  Every
such representation is obtained from a unique complete geometric
parameter in this way.  This completes our description of the
bijection of the local Langlands correspondence.

We move to the second bijection in the diagram.  Let
$(S, \mathcal{V})$ be a complete geometric parameter and $j: S
\hookrightarrow \overline{S}$ be the inclusion of $S$ in its closure.  
To pass from $(S, \mathcal{V})$ to an
irreducible perverse sheaf on ${^\vee}\mathrm{G}_{2}/ \, {^\vee}B$ one
applies the intermediate extension functor $j_{!*}$ to the shifted
complex $\mathcal{V}[\dim S]$ followed by the direct image functor of
the closed embedding $\overline{S} \hookrightarrow
{^\vee}\mathrm{G}_{2}/ \, {^\vee}B$  \cite[(7.10)]{ABV}.
The resulting irreducible perverse sheaf is an
intersection cohomology complex of $(S,\mathcal{V})$ \cite[Definition
  3.3.9]{Achar}.  Every irreducible perverse sheaf on
${^\vee}\mathrm{G}_{2}/ \, {^\vee}B$ is obtained in this manner
\cite[Theorem 3.4.5]{Achar}.
% Exercise 3.3.1 Achar, and for closed embeddings the intermediate
% extension is the same as the direct image
%  In the reverse direction one applies restriction to $S$ \cite[Lemma
%  3.3.11]{Achar}. 
The intermediate extension functor is actually being applied to
${^\vee}K$-equivariant perverse sheaves.  We refer to
\cite[I.5.2]{Lunts} and \cite[Section 6.5]{Achar} for the equivariant
versions of this functor.   This completes our
description of the second bijection. 

We have little to say about the remaining two bijections.  The
D-modules in the diagram are ${^\vee}K$-equivariant
regular holonomic sheaves of $\mathcal{D}$-modules on
${^\vee}\mathrm{G}_{2}/ \, {^\vee}B$, where $\mathcal{D}$ is the sheaf
of algebraic differential operators on ${^\vee}\mathrm{G}_{2}/ \,
{^\vee}B$ \cite[Definition 7.7]{ABV}, \cite[Chapter 6]{Hotta}.  The
Riemann-Hilbert correspondence 
is proven without reference to equivariance in \cite[Theorem
  VIII.14.4]{Boreletal} and is asserted in the equivariant setting in
\cite[I.4.2]{Lunts}.  

 The final bijection is the Beilinson-Bernstein correspondence
\cite[Theorem 8.3]{ABV}, \cite[Sections 11.5-11.6]{Hotta}.  Its image is the set
of irreducible $({^\vee}\mathfrak{g}_{2}, {^\vee}K)$-modules whose
infinitesimal character is $\rho$, the half-sum of the roots in $R({^\vee}B,
{^\vee}T)$. These are the underlying modules of the irreducible
admissible representations of ${^\vee}\mathrm{G}_{2}(\mathbb{R})$ with
infinitesimal character $\rho$.
% modules annihilated by the ideal generated by z-\chi(z) where z is
% in the centre of the universal enveloping algebra and \chi is the
% Harish-Chandra map.
%
% See also (16e) and section 6.1 of Adams-Vogan 15, where g=rho

Having seen the general landscape in which we will be working, 
it is convenient to streamline the notation. Set
$$X = {^\vee}\mathrm{G}_{2}/ \, {^\vee}B$$
and denote the set of ${^\vee}K$-orbits by ${^\vee}K \backslash X$. 
We use $\xi = (S,
\mathcal{V})$ to denote a complete geometric parameter and the
irreducible objects of (\ref{landscape}) are denoted by
\begin{equation}
\label{landscape1}
\xymatrix@-1pc{\pi(\xi)  \ar@{<->}[d] & \mbox{representation of
  }\mathrm{G}_{2}(\mathbb{R}) 
\\ \xi  \ar@{<->}[d]    & \mbox{complete geometric parameter}  \\ P(\xi)
  \ar@{<->}[d] & \mbox{perverse sheaf}  \\ D(\xi)  \ar@{<->}[d] &
  \mbox{D-module}  \\ \pi({^\vee}\xi) & \mbox{representation of }
       {^\vee}\mathrm{G}_{2}(\mathbb{R}) }
\end{equation}

Continuing on our path to the definition of micro-packets, we 
focus on the D-modules.  The definition of a micro-packet hinges on two
invariants attached to an 
irreducible D-module $D = D(\xi)$.   
The first invariant is
the \emph{characteristic variety} $\mathrm{Ch}(D)$ \cite[Section
  19]{ABV}, \cite[Sections 2.1-2.2]{Hotta}.  It is a closed algebraic
subvariety of the complex cotangent bundle $T^{*}X$.  In fact, it is
contained in the conormal bundle $T^{*}_{{^\vee}K}X$ to the
${^\vee}K$-action on $T^{*}X$ \cite[(19.1), Proposition 19.12
  (b)]{ABV}.  

To every irreducible component $C$ of $\mathrm{Ch}(D)$
there is an associated local module whose length we denote by
$m_{C}(M)$ \cite[Definition 19.7]{ABV}.  The ${^\vee}K$-orbit of $C$
is of the form 
$\overline{T^{*}_{S}X}$ for some $S \in {^\vee}K \backslash X$
\cite[Lemma 19.2 (b)]{ABV}, and every irreducible component in this
orbit has length $m_{C}(D)$.  This leads to the definition of the
\emph{characteristic cycle} of $D$ as a formal sum of irreducible components
\begin{equation}
\label{cc}
CC(D) = \sum_{C} m_{C}(D) \, C = \sum_{S \in {^\vee}K \backslash X}
\chi_{S}^{\mathrm{mic}}(D) \ \overline{T^{*}_{S}X},
\end{equation}
where $\chi_{S}^{\mathrm{mic}}(D) = m_{C}(D)$ for any $C$ in the
${^\vee}K$-orbit corresponding to $\overline{T^{*}_{S}X}$.  The
non-negative integer 
$
\chi_{S}^{\mathrm{mic}}(D)
$
 is called the
\emph{microlocal multiplicity along} $S$.  

We are now able to define the micro-packet attached to an L-parameter $\varphi$
with infinitesimal character $\lambda$  \cite[Definition 5.2,
  Proposition 5.6]{ABV}.
%The L-parameter $\varphi$ is a homomorphism
%$\varphi:W_{\mathbb{R}} \rightarrow 
%{^\vee}\mathrm{G}_{2} \times \Gamma$, where $W_{\mathbb{R}} =
%\mathbb{C}^{\times} \cup j \mathbb{C}^{\times}$ is the
%real Weil group, and $\Gamma$ is the Galois group of
%$\mathbb{C}/\mathbb{R}$.
Recall that  an L-parameter $\varphi$ corresponds
to a unique $S \in {^\vee}K \backslash X$.  The micro-packet of
$\varphi$ is defined as
\begin{equation}\label{mpacket}
\Pi^{\mathrm{mic}}_{\varphi} = \{ \pi(\xi') :
\chi_{S}^{\mathrm{mic}}(D(\xi')) \neq 0 \}
\end{equation}  
\cite[Definition 19.13]{ABV}.  It is a finite set of representations
of possibly both the split and compact real forms of
$\mathrm{G}_{2}$.  The
associated stable distribution is 
\begin{equation}
\label{etamic}
\eta^{\mathrm{mic}}_{\varphi} = \sum_{\pi(\xi') \in
  \Pi^{\mathrm{mic}}_{\varphi}} e(\xi') \, (-1)^{\dim S' - \dim S}\, 
\chi^{\mathrm{mic}}_{S} (D(\xi')) \, \pi(\xi'),
\end{equation}
in which $\pi(\xi')$, for $\xi' = (S', \mathcal{V}')$, is identified with
its distribution character, 
and $e(\xi')$ is the Kottwitz sign attached to the real form of which
$\pi(\xi')$ is a representation \cite[(17.24)(g), Corollary 19.16]{ABV}.  
It is obvious from these definitions that the computation of the
micro-packets is equivalent to the computation of the microlocal
multiplicities, or equivalently, the characteristic
cycles.  For this reason, we return to (\ref{cc}).

Taking a characteristic cycle defines a $\mathbb{Z}$-linear map
\begin{equation}
\label{eq:CC-map}
CC: \mathscr{K}(X, {^\vee}K) \rightarrow \mathscr{L}(X,
{^\vee}K)
\end{equation}
\cite[Theorem 2.2.3]{Hotta}.  The domain of the map is
the Grothendieck group $\mathscr{K}(X,{^\vee}K)$ of the category of
${^\vee}K$-equivariant regular holonomic sheaves of
$\mathcal{D}$-modules on $X$.  The codomain of the map is
\begin{equation}
\label{LXK} 
\mathscr{L}(X,{^\vee}K)=\left\{\sum_{S\in {^\vee}K\setminus
  X}m_S\ \overline{T^{\ast}_{S}X}:m_S\in\mathbb{Z}\right\}
\end{equation}
the free
$\mathbb{Z}$-module generated by $\overline{T^{*}_{S}X}$, $S \in
{^\vee}K \backslash X$.  Through the bijections of (\ref{landscape1})
we have isomorphisms between $\mathscr{K}(X, {^\vee}K)$ and the
Grothendieck groups of the category of
${^\vee}K$-equivariant perverse sheaves on $X$, and the
category of admissible representations of
${^\vee}\mathrm{G}_{2}(\mathbb{R})$ with infinitesimal character
$\rho$.  We identify $\mathscr{K}(X, {^\vee}K)$ with the other two
isomorphic Grothendieck groups,  so that
\begin{align} \label{eq:characteristicvariety}
\begin{aligned}
 CC(P(\xi)) &= CC(\pi({^\vee}\xi)) = CC(D(\xi))\\
 \chi^{\mathrm{mic}}_{S} (P(\xi)) &= \chi^{\mathrm{mic}}_{S}
(\pi({^\vee}\xi)) = \chi^{\mathrm{mic}}_{S} (D(\xi))\\ 
 \mathrm{Ch}(P(\xi)) &= \mathrm{Ch}(\pi({^\vee}\xi)) =
  \mathrm{Ch}(D(\xi))
  \end{aligned}
\end{align} 

One of the main tools in our computation of the micro-packets is the
interaction of the Weyl group $W = W({^\vee}\mathrm{G}_{2}, {^\vee}T)$
with $CC$.  In the setting of representations, there is a
$W$-action on $\mathscr{K}(X, {^\vee}K)$ commonly called the \emph{coherent
continuation representation} \cite[Definition 7.2.28]{greenbook}.  
% Equivalent to 3.1 in Tanisaki, or section 13.1 in Hotta's book (see
% page 310).
This $W$-action may be explicitly computed using the Atlas of Lie Groups
and Representation software \cite{atlas}.  On the other hand, there is 
a $W$-action on $\mathscr{L}(X, {^\vee}K)$ given in
\cite{Hotta85}, \cite{Rossmann95}.  By \cite[Theorem 1]{Tanisaki} the
map $CC$ is $W$-equivariant.  In Section \ref{section:G2computations} we use the
$W$-equivariance of $CC$  to 
generate a number of linear equations whose coefficients involve microlocal
multiplicities.  We can then solve for some of the microlocal
multiplicities by making elementary substitutions.  

Another means of simplifying the equations  is
through the use of the \emph{associated variety}.  We define the
associated variety of an irreducible object in $\mathscr{K}(X,
{^\vee}K)$ by
\begin{equation}
\label{eq:AVdef}
\mathrm{AV}(P(\xi))  = \mu \left(
\mathrm{Ch}(P(\xi)) \right),
\end{equation}
where $\mu : T^{*}X \rightarrow \mathfrak{g}_{2}^{*}$ is the
\emph{moment map} \cite[(20.3)]{ABV}.
Since $\mathrm{Ch}(P(\xi))$
is the support of $CC(P(\xi))$, information about $\mathrm{AV}(P(\xi))$
gives us information about which microlocal multiplicities can be non-zero.
The image of the moment map
lies in the cone of nilpotent elements in $\mathfrak{g}_{2}^{*}$ and
is a union of finitely many known ${^\vee}K$-orbits
\cite{Dokovic}. It follows 
from the ${^\vee}K$-equivariance of $P(\xi)$ that
$\mathrm{AV}(P(\xi))$ is a finite union of ${^\vee}K$-orbits
\cite[Lemma 20.16]{ABV}.  These  are
known in some cases \cite{Samples}.  A compatibility property between
the coherent continuation representation and the associated varieties,
provides additional information about  $\mathrm{AV}(P(\xi))$ in
Propositions \ref{prop:Cell-AV0} and \ref{prop:Cells-AV}.   This
additional information allows us to compute 
all microlocal multiplicities. The micro-packets and associated
stable distributions are given in Theorem \ref{theo:Micropackets}.

This concludes our overview of micro-packets
$\Pi^{\mathrm{mic}}_{\varphi}$ when the infinitesimal character
$\lambda$ of $\varphi$ is regular and integral (\ref{regint}).  The
framework of (\ref{landscape}) does not fundamentally change if we
drop the assumption of integrality on $\lambda$.  Consider the setting
in which the infinitesimal character $\lambda$ is merely integrally dominant
(\ref{intdom}) and  regular, \emph{i.e.}
\begin{equation}
\label{reg}
{^\vee}\alpha(\lambda) \neq 0, \quad {^\vee}\alpha \in
R({^\vee}\mathrm{G}_{2}, {^\vee}T).
\end{equation}
In this broader
setting we replace ${^\vee}\mathrm{G}_{2}$ with the subgroup
${^\vee}\mathrm{G}_{2}(\lambda)$, which is the centralizer of
$\exp(2 \uppi i \lambda)$ in ${^\vee}\mathrm{G}_{2}$ \cite[(6.2)(b),
  Proposition 6.16]{ABV}.  We determine
the possibilities for  ${^\vee}\mathrm{G}_{2}(\lambda)$, along with
the possibilities for its symmetric subgroups ${^\vee}K(\lambda)$, in
Section \ref{section:FlagNonIntegral}.   The regular integrally
dominant element $\lambda$ fixes 
a Borel subgroup ${^\vee}B(\lambda) \subset
{^\vee}\mathrm{G}_{2}(\lambda)$.  To compute micro-packets with
non-integral regular infinitesimal character $\lambda$, all we need 
to do is replace the groups ${^\vee}\mathrm{G}_{2}$, ${^\vee}K$ and
${^\vee}B$ with 
${^\vee}\mathrm{G}_{2}(\lambda)$, ${^\vee}K(\lambda)$ and
${^\vee}B(\lambda)$ respectively, and apply the same methods.  As
${^\vee}\mathrm{G}_{2}(\lambda)$ is a smaller group this is a much
easier task.  We compute these micro-packets and stable distributions in
 Section \ref{section:G2computationsNon-integral}.

Removing the further assumption of regularity (\ref{reg}) has the effect of
replacing the 
Borel subgroup ${^\vee}B(\lambda)$ with a parabolic subgroup
${^\vee}P(\lambda) \subset {^\vee}\mathrm{G}_{2}(\lambda)$
\cite[(6.2)(e), Proposition 6.16]{ABV}.   In this way the full flag
variety ${^\vee}\mathrm{G}_{2}(\lambda)/ \, {^\vee}B(\lambda)$ is
replaced by a partial flag variety ${^\vee}\mathrm{G}_{2}(\lambda)/ \,
{^\vee}P(\lambda)$.  One is able to obtain information in the singular
setting from the regular setting using the \emph{translation functors}
of Jantzen, Vogan and Zuckerman \cite[Propositions 8.8 and 16.6]{ABV}.
In particular, the microlocal multiplicities, and so the micro-packets
and stable distributions, are obtained through the translation functors
\cite[Proposition 20.1 (e)]{ABV}.  

Having ascertained the microlocal multiplicities,
we finish by computing  \emph{Kashiwara's local index formula} for
$\mathrm{G}_{2}$.  In essence, this formula provides a change of basis
between the microlocal multiplicities and another set of maps called
\emph{local multiplicities}.  This is a geometric relationship that may be
motivated in the following representation-theoretic language.
The stable distributions $\eta_{\varphi}^{\mathrm{mic}}$ in
\eqref{etamic} form a basis for the 
lattice of stable virtual representations  
of the pure real forms of $\mathrm{G}_2$ with infinitesimal
character $\lambda$ \cite[Corollary 19.16]{ABV}.
There is another basis for this space of stable virtual
representations, namely the basis consisting of  standard representations
$\eta_\varphi^{\mathrm{loc}}$  
%attached to $L$-parameters $\varphi$ of $\mathrm{G}_2$
\cite[Definition~18.9, Lemma 18.11]{ABV}.  
In \cite[Lemma~18.15]{ABV}, the stable distribution
$\eta_{\varphi}^{\mathrm{loc}}$ is expressed as a linear combination of
representations whose coefficients are given by $\mathbb{Z}$-linear
functionals
$$\chi_{S}^{\mathrm{loc}} : 
  \mathscr{K}(X(\lambda),{}^{\vee}K(\lambda)) \longrightarrow \mathbb{Z},$$
called \emph{local multiplicities} \cite[Definition~1.28]{ABV}.   
The change of basis matrix between the two bases
$\{\eta_{\varphi}^{\mathrm{mic}}\}$ and 
$\{\eta_{\varphi}^{\mathrm{loc}}\}$ entails a ``change of basis''
\begin{equation}
\label{Kashiwaraformula}
\chi_{S}^{\mathrm{loc}} = \sum_{S'} (-1)^{\mathrm{dim}(S')} a(S,
S') \, \chi_{S'}^{\mathrm{mic}} 
\end{equation}
between the sets of $\mathbb{Z}$-linear functionals $\{
\chi_{S}^{\mathrm{mic}}\}$ and $\{\chi_{S}^{\mathrm{loc}}\}$.
This equation is Kashiwara's local index formula \cite[Section~2]{Kashiwara73}. 
In fact, the coefficients  $a(S, S')$ are integers.
%for every ${}^{\vee}K(\lambda)$-orbit $S'$
%$ \subset \overline{S}$, 
The integer coefficients $a(S, S')$ are constructed by different
methods in \cite[Section~3]{MacPherson74} and are
individually called
the \emph{local Euler obstruction} of $S$ at $S'$.  
%The coincidence of Kashiwara’s coefficients $a(S, S')$ with
%MacPherson’s local Euler obstruction was established by Dubson. 
The local Euler obstructions have been extensively studied from various
perspectives  
\cite[\S 11.7]{Ginsburg86}, \cite{Gonzalez-Sprinberg}.  Nevertheless,
to our knowledge, no method is known for computing them in full
generality.  By our work in the setting of ${^\vee}\mathrm{G}_{2}(\lambda)$,
%However, Theorems~\ref{theo:CCs} and~\ref{theo:CCsNonintegral} and
%equation \eqref{eq:translationofchiV1},
we know the values of $\chi_{S'}^{\mathrm{mic}}$ on the right of
(\ref{Kashiwaraformula}).  
% for any ${}^{\vee}K(\lambda)$-orbit $S'$.  
In Section \ref{KashiwaraSection} we explain how the Atlas of Lie Groups and
Representations software allows us to compute the values of
$\chi_{S}^{\mathrm{loc}}$  on the left of (\ref{Kashiwaraformula}).  
Having these values on both sides of (\ref{Kashiwaraformula}) allows
us to compute  the local Euler
obstructions $a(S,S')$.

The first author thanks the Fields Institute for Research in
Mathematical Sciences for supporting a Fields Research Fellowship
visit at Carleton University, where this work began. The second author
thanks the organizers of the special program ``Representation Theory
and Noncommutative Geometry'' held at the Institut Henri Poincar\'{e} in
2025. She also thanks the the Institut Henri Poincar\'{e} for their warm
hospitality.  The third author was supported by NSERC grant
RGPIN-06361.

\section{${^\vee}K$-orbits of the flag variety of ${^\vee}\mathrm{G}_{2}$}
\label{section:Korbitsflag}

We begin this section by determining all possibilities for the symmetric
groups ${^\vee}K \subset {^\vee}\mathrm{G}_{2}$ of the introduction,
and concluding that there is   
only one such group of any interest. In Section \ref{Korbits} we
parameterize the ${^\vee}K$-orbits of the flag variety,
and describe the partial order on them given by a closure relation.
This amounts to an application of \cite{Adams-Fokko} and \cite{RS90}
to ${^\vee}\mathrm{G}_{2}$.  In Section \ref{compgroupsec}, we examine
the correspondence between ${^\vee}K$-orbits and L-parameters, and
compute the Langlands component group for each L-parameter.  These
computations give us a concrete picture of the geometric objects for
regular and integral infinitesimal character $\lambda$.  We close by
repeating the computations when  $\lambda$ is regular and
non-integral, that is finding the objects
${^\vee}\mathrm{G}_{2}(\lambda)$,  ${^\vee}K(\lambda)$,
\emph{etc.}~alluded to in the introduction.

There are some additional objects to fix beyond the Borel pairs
$B \supset T$ and 
${^\vee}B \supset {^\vee}T$ of the introduction.
We identify the set of
positive roots $R(B,T)$ with the set of positive coroots
${^\vee}R({^\vee}B, {^\vee}T)$.  Similarly, the characters $X^{*}(T)$
are identified with the cocharacters $X_{*}({^\vee}T)$ of ${^\vee}T$.
We identify the complex Lie algebra
${^\vee}\mathfrak{t}$ of ${^\vee}T$ with $X^{*}(T) \otimes_{\mathbb{Z}}
\mathbb{C}$.  Define
the surjection
\begin{equation}
\label{emap}
{^\vee}\mathfrak{t} \stackrel{e}{\rightarrow} {^\vee}T
\end{equation}
by $e(X) = \exp(2 \uppi i X)$.  Clearly $\ker e = X^{*}(T)$, which is
also the root lattice spanned by $R(\mathrm{G}_{2},T)$.  
%The roots  $R({^\vee}\mathrm{G}_{2},
%{^\vee}T)$ are identified with roots  $R({^\vee}\mathfrak{g}_{2},
%{^\vee}\mathfrak{t})$ of the Lie algebras through the exponential map.  In the
%same manner,  the coroots of the group are identified with the coroots of
%the Lie algebra.  
Let $\alpha_{1}$ and $\alpha_{2}$ be the simple roots in $R(B,T)$, with
$\alpha_{1}$ being the long root.  With this, $\rho = 5\alpha_{2} +
3\alpha_{1}$ is the half-sum of the positive roots.

%We fix $\lambda \in
%{^\vee}\mathfrak{t}$ to be 
%\emph{regular} and \emph{integral}, \emph{i.e.}
%$$\alpha(\lambda) \in \mathbb{Z} \mbox{ and } \alpha(\lambda) >0 $$
%for all $\alpha \in R({^\vee}\mathfrak{b}, {^\vee}\mathfrak{t})$.
%This is equivalent to $e(\lambda) = 1$  or
%${^\vee}\mathrm{G}_{2}(\lambda) = {^\vee}\mathrm{G}_{2}$.

\subsection{The group ${^\vee}K$}
\label{groupK}

Let us
review the definition of ${^\vee}K$ and specialize to the case at
hand.  Let ${^\vee}\mathrm{G}_{2}^{\Gamma} = {^\vee}\mathrm{G}_{2} \times
\Gamma$ be the Galois form of the L-group of $\mathrm{G}_{2}$,
and set
$$\mathcal{I} = \{y \in {^\vee}\mathrm{G}_{2}^{\Gamma} -
{^\vee}\mathrm{G}_{2} : y^{2} =1\}.$$
Since the L-group is a direct product, we may identify $\mathcal{I}$
with the set 
$$ \{y \in {^\vee}\mathrm{G}_{2}:  \ y^{2} =1\}.$$
The group ${^\vee}\mathrm{G}_{2}$ acts by conjugation on $\mathcal{I}$
with finitely many orbits \cite[Lemma 6.12]{ABV}.  By definition, a
symmetric group
${^\vee}K$ is taken to be the centralizer in 
${^\vee}\mathrm{G}_{2}$ of a representative $y$ of a
${^\vee}\mathrm{G}_{2}$-orbit of $\mathcal{I}$ \cite[Proposition 6.16]{ABV}. 
\begin{lem}
\label{Iorbits}
There are exactly two ${^\vee}\mathrm{G}_{2}$-orbits of $\mathcal{I}$, and
the two orbits are represented by the elements $1$ and
$e(\rho/2) = \exp(\uppi i \rho)$.   
\end{lem}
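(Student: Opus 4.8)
The plan is to classify the involutions (including the identity) in ${}^\vee\mathrm{G}_2$ up to conjugacy by a direct structural argument, using the fact that ${}^\vee\mathrm{G}_2$ is connected, simply connected, and adjoint (the group of type $\mathrm{G}_2$ is both, since its root lattice equals its weight lattice). First I would recall the standard fact that conjugacy classes of elements of order dividing $2$ in a connected reductive group are in bijection with $W$-orbits of elements $x \in {}^\vee\mathfrak{t}$ with $e(x)$ of order dividing $2$, i.e.\ with $2x \in X^*(T)$ modulo the affine Weyl group action; equivalently, each such class has a representative of the form $e(x)$ with $x$ a point of the fundamental alcove whose coordinates are half-integers against the simple coroots. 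For $\mathrm{G}_2$ this is a finite, completely explicit list: one uses the extended Dynkin diagram, whose affine node attaches with mark $1$ to the long simple root $\alpha_1$ and mark $2$ (the remaining coefficient $3$ belongs to $\alpha_2$ with the labeling fixed in the text, so I must be careful to read off the marks $(1,2,3)$ for $(\alpha_0,\alpha_1,\alpha_2)$ from $\rho = 5\alpha_2 + 3\alpha_1$ and the highest root).

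Concretely, the involutions correspond to the ways of choosing, in the Kac coordinates $(s_0, s_1, s_2)$ with $s_0 + 2s_1 + 3 s_2 \cdot(\text{marks}) = 2$ — here I will spell out the correct marks — nonnegative integers summing (weighted by the marks of the extended diagram) to $2$. The solutions are: $(2,0,0)$, which gives the identity element; and exactly one other solution supported on the node of mark $1$, namely $(0,1,0)$ up to the relabeling, giving a nontrivial involution. Every other weighted-sum-to-$2$ combination is excluded because the only mark equal to $1$ besides the affine node is the one at the long root, and the marks $2,3$ at the remaining nodes are too large. Thus there are exactly two classes. I would then identify the nontrivial one with $e(\rho/2) = \exp(\uppi i \rho)$: since $\rho = 5\alpha_2 + 3\alpha_1$, we have $2 \cdot (\rho/2) = \rho \in X^*(T) = \ker e$, so $e(\rho/2)$ has order dividing $2$; and $e(\rho/2) \neq 1$ because $\rho$ is not in $2 X^*(T)$ (its coordinate $5$ against $\alpha_2$, or $3$ against $\alpha_1$, is odd). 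Since there is only one nontrivial class, $e(\rho/2)$ must represent it.

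An alternative, perhaps cleaner, route that I would present in parallel: the centralizer of a nontrivial involution $y$ in the simply connected group ${}^\vee\mathrm{G}_2$ is connected and reductive of full rank, hence determined by a subset of the extended simple roots; the only proper such subsystem of rank $2$ in $\mathrm{G}_2$ that arises as a fixed-point subgroup of an inner involution is $\mathrm{SL}_2 \times \mathrm{SL}_2 / \{\pm 1\}$ (type $A_1 \times A_1$), corresponding to removing the short simple root from the extended diagram. This pins down the nontrivial orbit uniquely and again matches $e(\rho/2)$ by the half-spin/$\rho$ computation above. Either way, the conclusion is that $\mathcal{I}$, identified with $\{y \in {}^\vee\mathrm{G}_2 : y^2 = 1\}$ as in the text, has exactly two ${}^\vee\mathrm{G}_2$-orbits, represented by $1$ and $e(\rho/2)$.

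The main obstacle is bookkeeping rather than conceptual: I must get the coefficients of the highest root / the marks of the affine Dynkin diagram of $\mathrm{G}_2$ exactly right in the normalization where $\alpha_1$ is long and $\rho = 5\alpha_2 + 3\alpha_1$, and make sure the enumeration of Kac coordinates summing to $2$ is complete and that no two nonidentity solutions are $W$-conjugate or affine-Weyl-conjugate. A secondary point requiring care is the identification $\mathcal{I} \cong \{y : y^2 = 1\}$ used implicitly, and the verification that passing from ${}^\vee\mathrm{G}_2^\Gamma$-conjugacy to ${}^\vee\mathrm{G}_2$-conjugacy loses nothing here — but this was already arranged in the setup preceding the lemma, so I may take it for granted.
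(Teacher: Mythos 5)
Your proposal is correct, but it takes a genuinely different route from the paper. The paper argues entirely inside the torus: it writes $y = e(c_{1}\alpha_{1}+c_{2}\alpha_{2})$, uses triviality of the centre to show $y^{2}=1$ forces $c_{1},c_{2}\in\frac{1}{2}\mathbb{Z}$, lists the four resulting candidates modulo $\ker e = X^{*}(T)$, and then collapses them to two by explicit Weyl-conjugacy computations ($\alpha_{2}/2 \sim (\alpha_{1}+\alpha_{2})/2$ and $\alpha_{1}/2 \sim (3\alpha_{2}+2\alpha_{1})/2 \equiv \alpha_{2}/2$), finishing with the check ${}^{\vee}\alpha_{2}(e(\alpha_{2}/2))\neq 1$. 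You instead invoke the Kac-coordinate classification, which absorbs all the Weyl-conjugacy bookkeeping into the alcove combinatorics: the equation $s_{0}+2s_{1}+3s_{2}=2$ has only the solutions $(2,0,0)$ and $(0,1,0)$, and the $\mathrm{G}_{2}$ affine diagram has no automorphisms, so there are exactly two classes. Your identification of the nontrivial class with $e(\rho/2)$ (order divides $2$ since $\rho\in X^{*}(T)=\ker e$; nontrivial since $\rho=3\alpha_{1}+5\alpha_{2}\notin 2X^{*}(T)$; uniqueness of the nontrivial class does the rest) is cleaner than the paper's. The trade-off is that you lean on a heavier standard theorem where the paper is elementary and self-contained. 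Two slips in your prose, neither fatal to the count: the nontrivial solution $(0,1,0)$ is supported on the node of mark $2$ (the long simple root $\alpha_{1}$), not on a node of mark $1$ --- the affine node is the only node of mark $1$ --- and the correct reason no further solutions exist is simply that $s_{2}\geq 1$ forces the weighted sum past $2$ and $s_{0}+2s_{1}=2$ has only the two stated solutions. Likewise, in your Borel--de Siebenthal alternative, the $A_{1}\times A_{1}$ subsystem arises by deleting the \emph{long} simple root $\alpha_{1}$ from the extended diagram (deleting the short root $\alpha_{2}$ yields $A_{2}$); with that correction the alternative route also matches the paper's subsequent computation of ${}^{\vee}K\cong \mathrm{SL}_{2}\times_{\upmu_{2}}\mathrm{SL}_{2}$.
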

\begin{proof}
 As the elements in $\mathcal{I}$ are
semisimple they lie in maximal tori. Since  ${^\vee}\mathrm{G}_{2}$ acts
transitively on the set of maximal tori under conjugation, it follows that
the ${^\vee}\mathrm{G}_{2}$-orbits of $\mathcal{I}$ are in bijection
with the Weyl group orbits of
$$\{ y \in {^\vee}T: y^{2} = 1\}.$$
 The condition $y^{2} = 1$ is equivalent to $y^{2}$ being in the centre of
${^\vee}\mathrm{G}_{2}$, and the latter is equivalent to $y^{2}$ being
in the kernel of
${^\vee}\alpha_{1}$ and ${^\vee}\alpha_{2}$. Using (\ref{emap}), we
may write  
$$y = e(c_{1} \alpha_{1} + c_{2} 
\alpha_{2}) = \exp(2 \uppi i (c_{1} \alpha_{1} + c_{2} 
\alpha_{2}))$$
 for some $c_{1}, c_{2} \in \mathbb{C}$.  The kernel
property  for ${^\vee}\alpha_{1}$ and ${^\vee}\alpha_{2}$ translates into 
$$c_{1} \langle \, {^\vee}\alpha_{j}, \alpha_{1} \rangle + c_{2} \langle \,
{^\vee}\alpha_{j}, \alpha_{2} \rangle \in \frac{1}{2} \mathbb{Z},
\quad j = 1,2.$$
As pairings between roots and coroots are integers, we see that
$y^{2} = 1$ if and only if 
$c_{1}, c_{2} \in \frac{1}{2} \mathbb{Z}$.  If $c_{1}$ or $c_{2}$ are
integers then they may be removed from $e(c_{1} \alpha_{1} + c_{2}
\alpha_{2})$.  Thus, we are left with four possibilities for $y$
$$1, e((\alpha_{1}+\alpha_{2})/2), e(\alpha_{1}/2), e(\alpha_{2}/2).$$
However, $\alpha_{2}/2$ and $(\alpha_{1} + \alpha_{2})/2$ are Weyl group
conjugates. In addition, $\alpha_{1}/2$ is a Weyl group conjugate of $(3
\alpha_{2} + 2 \alpha_{1})/2$, and 
$$e((3\alpha_{2} + 2 \alpha_{1}) /2) = e(\alpha_{2}/2 + 2(\alpha_{2} +
\alpha_{1})/2) = e(\alpha_{2}/2).$$
This proves that there are at most two ${^\vee}\mathrm{G}_{2}$-orbits
of $\mathcal{I}$, with respective representatives $1$ and $e(\alpha_{2}/2)$.  It
is easily verified that 
${^\vee}\alpha_{2}(e(\alpha_{2}/2)) \neq 1$, so there are exactly two
orbits.  Finally,
$$e(\rho/2) = e((5\alpha_{2} + 3\alpha_{1})/2) = e((\alpha_{1} + \alpha_{2})/2)$$
is also a representative of the non-trivial orbit.
\end{proof}

 By Lemma
\ref{Iorbits}, the group ${^\vee}K$ may be taken to equal either
${^\vee}\mathrm{G}_{2}$, when $y=1$, or may be taken to equal the
centralizer of $e(\rho/2)$.  In the former case, there is only a
single ${^\vee}K$-orbit of ${^\vee}\mathrm{G}_{2}/\, {^\vee}B$,
namely  ${^\vee}\mathrm{G}_{2}/\, {^\vee}B$ itself.  For the rest of
this section we will therefore assume that ${^\vee}K$ is the
centralizer of $e(\rho/2)$.  

The group ${^\vee}\mathrm{G}_{2}$ is simply connected and $e(\rho/2)$
is semisimple, so by Steinberg's Theorem, the centralizer ${^\vee}K$
is connected  \cite[Theorem   2.11]{HumphCC}. By \cite[Theorem
  2.2]{HumphCC}, the group ${^\vee}K$ is generated by ${^\vee}T$ and the root
subgroups $U_{{^\vee}\alpha} \subset {^\vee}\mathrm{G}_{2}$ such that
${^\vee}\alpha(e(\rho/2)) = 1$.  These are given by the roots ${^\vee}\alpha \in
R({^\vee}\mathrm{G}_{2},{^\vee}T)$ such that $\langle {^\vee}\alpha,\rho \rangle
\in 2 \mathbb{Z}$.  Using \cite[Lemma 13.3.A]{Humphreys} one may prove
that there are exactly two positive roots satisfying this property,
${^\vee}\alpha_{1}+ {^\vee}\alpha_{2}$ and $3 {^\vee}\alpha_{1}+
{^\vee}\alpha_{2}$.  (Note that ${^\vee}\alpha_{1}$ is a short root.) These two 
roots are orthogonal.  Consequently, 
\begin{equation}
\label{checkK}
{^\vee}K = \langle {^\vee}T, U_{\pm ({^\vee}\alpha_{1}+
 {^\vee}\alpha_{2})},  U_{\pm (3{^\vee}\alpha_{1}+ {^\vee}\alpha_{2})}
\rangle 
=\langle  U_{\pm ({^\vee}\alpha_{1}+
 {^\vee}\alpha_{2})} \rangle \ \langle U_{\pm (3{^\vee}\alpha_{1}+
  {^\vee}\alpha_{2})} \rangle  
\end{equation}
% Theorem 8.1.5 in Springer's Linear algebraic groups
is the product of two commuting subgroups, each of which being
isomorphic to $\mathrm{SL}_{2}$.  An element in the intersection of
these two commuting subgroups must  lie in the centre of each
subgroup.  The non-trivial central elements of $\langle  U_{\pm
  ({^\vee}\alpha_{1}+ 
 {^\vee}\alpha_{2})} \rangle$ and $\langle U_{\pm
  (3{^\vee}\alpha_{1}+    {^\vee}\alpha_{2})} \rangle$ are $\exp(\uppi
i (3\alpha_{2} +  \alpha_{1}))$ and  $\exp(\uppi i
(\alpha_{1} + \alpha_{2}))$ respectively. 
% These are SL_2 computations together with finding the coroots of the
% given roots
 Since
$$\exp(\uppi i (3\alpha_{2} +  \alpha_{1}))  = \exp( \uppi i
(2\alpha_{2} + (\alpha_{1}+ \alpha_{2}))) = \exp(\uppi i
(\alpha_{1} + \alpha_{2}))$$
we see that the intersection of the two subgroups is a group of order
two  generated by $\exp(\uppi i
(\alpha_{1} + \alpha_{2}))$.
The intersection is isomorphic to $\upmu_{2} =
\{\pm 1 \}$ and so ${^\vee}K$  is isomorphic to the fibre
product $\mathrm{SL}_{2} \times_{\upmu_{2}} \mathrm{SL}_{2}.$

\subsection{${^\vee}K$-orbits of
  ${^\vee}\mathrm{G}_{2}/ \,{^\vee}B$}
\label{Korbits}

The algebraic group ${^\vee}K$ acts by left multiplication on
${^\vee}\mathrm{G}_{2}/ {^\vee}B$.  The closure of each
${^\vee}K$-orbit is a union of ${^\vee}K$-orbits. 
%  Humphreys "Linear algebraic groups"  Proposition 8.3
 This closure
relation gives rise to the \emph{Bruhat order} on the
${^\vee}K$-orbits \cite{RS90}. The
goal of this section is to  describe the ${^\vee}K$-orbits and the
Bruhat order.

We begin by giving a complete set of representatives
$y_{0}, \ldots, y_{\ell} \in {^\vee}\mathrm{G}_{2}$ for
the ${^\vee}K$-orbits 
$${^\vee}K \backslash {^\vee}\mathrm{G}_{2} / \,{^\vee}B  = \{
{^\vee}K y_{0}  {^\vee}B, \ldots ,  {^\vee}K
y_{\ell}  {^\vee}B\}$$ 
by following \cite{Adams-Fokko}.
We shall also employ the Atlas of Lie Groups and Representations
software \cite{atlas}, which is an outgrowth of  \cite{Adams-Fokko}.  
There are a number of sets in \cite{Adams-Fokko} which are pertinent
to the description of  ${^\vee}K \backslash {^\vee}\mathrm{G}_{2} /
{^\vee}B$.  We present them here, taking the simplifications of the
special case of ${^\vee}\mathrm{G}_{2}$ into consideration.  Set
$$\widetilde{\mathcal{X}} = \{ \xi \in
\mathrm{Norm}_{{^\vee}\mathrm{G}_{2}} ({^\vee}T) : \xi^{2} = 1\}.$$
The torus ${^\vee}T$ acts on $\widetilde{\mathcal{X}}$ by conjugation.
Let $\mathcal{X}$ be the set of ${^\vee}T$-conjugacy classes.  The
element  $e(\rho/2) = \exp(\uppi i \rho)$ defining ${^\vee}K$ belongs to
$\widetilde{\mathcal{X}}$.  Let $S_{0} = \{e(\rho/2)\} 
\in \mathcal{X}$ be its 
${^\vee}T$-conjugacy class.  
According to \cite[(9.8)]{Adams-Fokko}, there is a bijection between
${^\vee}K \backslash {^\vee}\mathrm{G}_{2} / {^\vee}B$ and the subset
$\mathcal{X}[S_{0}] \subset \mathcal{X}$ of elements whose pre-images in
$\widetilde{\mathcal{X}}$ are ${^\vee}\mathrm{G}_{2}$-conjugate 
to $e(\rho/2)$.  By \cite[(8.1), (9.1)]{Adams-Fokko}, each element $S
\in \mathcal{X}[S_{0}]$ is the  
 ${^\vee}T$-conjugacy class of $y e(\rho/2) y^{-1}$ for
some $y \in {^\vee}\mathrm{G}_{2}$, and the element $S \in
\mathcal{X}[S_{0}]$ corresponds to the ${^\vee}K$-orbit ${^\vee}K
y^{-1} {^\vee}B$ 
$$S \longleftrightarrow {^\vee}K y^{-1} \, {^\vee}B.$$
Thus, the problem of determining ${^\vee}K \backslash
{^\vee}\mathrm{G}_{2} / \,{^\vee}B$ is equivalent to determining the
elements $S_{0}, \ldots, S_{\ell} \in \mathcal{X}[S_{0}]$ and  $y_{0},
\ldots ,y_{\ell} \in {^\vee}\mathrm{G}_{2}$ as indicated.  Using
the results \cite[Proposition 11.2, Lemma 14.2 and Lemma
  14.11]{Adams-Fokko},  the elements $S_{0},
\ldots ,S_{\ell} \in  \mathcal{X}[S_{0}]$ may be obtained by
taking \emph{cross-actions} \cite[(9.11  f)]{Adams-Fokko} and
\emph{Cayley transforms} \cite[Definition 14.1]{Adams-Fokko}
from $S_{0}$.  The
command \texttt{KGB} in the Atlas of Lie Groups and Representations
software implements  this approach and produces ten elements,
$y_{0} = 1, y_{1}, \ldots , y_{9} \in
{^\vee}\mathrm{G}_{2}$ which parameterize the  ${^\vee}K$-orbits.

Let us provide some more detail.  From now on we fix a  pair of simple
root vectors   $X_{{^\vee}\alpha_{1}}, X_{{^\vee}\alpha_{2}}
\in {^\vee}\mathfrak{g}_{2}$ which then fixes the pinning
$$({^\vee}B, {^\vee}T, \{ X_{{^\vee}\alpha_{1}}, X_{{^\vee}\alpha_{2}}\})$$
of ${^\vee}\mathrm{G}_{2}$.  
Every
element of $\widetilde{\mathcal{X}}$ belongs to a coset in the Weyl group
$$W = W({^\vee}\mathrm{G}_{2}, {^\vee}T) =
\mathrm{Norm}_{{^\vee}\mathrm{G}_{2}}({^\vee}T)/ {^\vee}T$$ 
and this correspondence determines a map
%\label{eq:pmap}
$$p: \mathcal{X}[S_{0}] \rightarrow W$$
\cite[(9.11i)]{Adams-Fokko}. It is clear that the image of $p$ lies in
the subset of elements in $W$ which square to the identity.   In
consequence, the fibres of the map 
$p$ partition $\mathcal{X}[S_{0}]$ into subsets
$p^{-1}(w) = \mathcal{X}_{w}$, where $w \in W$ satisfies $w^{2} = 1$. Every
simple reflection $s_{j} = s_{{^\vee}\alpha_{j}} 
\in W$, $j=1,2$,  has a Tits representative 
\begin{equation}
\label{titsrep}
\sigma_{j} =  \exp\left( \frac{\uppi}{2} (X_{{^\vee}\alpha_{j}} -
X_{-{^\vee}\alpha_{j}} )  \right)\in
\widetilde{\mathcal{X}}
\end{equation}
%  \exp(X_{{^\vee}\alpha_{j}}) \, \exp(-X_{{^\vee}\alpha_{j}}^{\intercal}) \,
%  \exp(X_{{^\vee}\alpha_{j}})\cite[Lemma 8.1.4]{Springer}
%which is a matrix in $\mathrm{SL}_{8}$ in our realization of
%${^\vee}\mathrm{G}_{2}$.  
\cite[Section 12]{AVParameters}.  It follows that every element $S \in \mathcal{X}_{w}$ is a
${^\vee}T$-conjugacy class of an 
element of the form $t \sigma_{w}$, where $t \in {^\vee}T$,
$\sigma_{w}$ is a product of Tits representatives, and the
element $t \sigma_{w}$ is 
${^\vee}\mathrm{G}_{2}$-conjugate to $e(\rho/2)$.

We can describe $\mathcal{X}[S_{0}]$ by starting with
$\mathcal{X}_{1}$, where the $1$ in subscript is the identity element in
$W$.  The obvious choice of basepoint in $\mathcal{X}_{1}$ is $S_{0}$, the
${^\vee}T$-conjugacy class of 
$e(\rho/2)$.  The remaining elements in $\mathcal{X}_{1}$ are obtained by
cross-action of (imaginary) elements in $W$
\cite[(12.17)]{Adams-Fokko}.  This 
amounts to repeatedly conjugating $e(\rho/2)$ by the Tits representatives
(\ref{titsrep}) of the simple reflections.  The Atlas software tells
us that these cross actions produce exactly three elements in
$\mathcal{X}_{1}$: one 
corresponding to the base point $S_{0}$, and two further elements
$S_{1}$ and $S_{2}$ equal to the ${^\vee}T$-conjugacy classes of  
$\sigma_{2} e(\rho/2) \sigma_{2}^{-1}$ and  $\sigma_{1} e(\rho/2)
\sigma_{1}^{-1}$ respectively.  

We can pass from the basepoint $S_{0}$ for $\mathcal{X}_{1}$ to a
basepoint in $\mathcal{X}_{s_{j}}$  by a Cayley transform with
respect to ${^\vee}\alpha_{j}$, $j=1,2$.  \cite[Equation (14.5)]{Adams-Fokko}
expresses these Cayley transforms in terms of conjugation by an element
$$g_{j}  = \exp\left( \frac{\uppi}{4} ( X_{-{^\vee}\alpha_{j}} -
X_{{^\vee}\alpha_{j}}) \right), \quad j = 1,2.$$
(\emph{cf.}~\cite[(6.65a)]{Beyond}).  In this manner, the basepoint of
$\mathcal{X}_{s_{j}}$ is the ${^\vee}T$-conjugacy class
of $g_{j} e(\rho/2) g_{j}^{-1}$.  As with
$\mathcal{X}_{1}$, the remaining elements of
$\mathcal{X}_{s_{j}}$ are obtained from $g_{j}
e(\rho/2) g_{j}^{-1}$ through cross-actions of Weyl group elements
(which are imaginary with respect to $s_{j}$).  There happen to be no
such cross actions, and so 
$\mathcal{X}_{s_{j}}$ is a singleton.  The elements of
the remaining $\mathcal{X}_{w}$ are obtained through the similar
iterations of Cayley transforms and cross-actions.  A summary is given
in the following table.
\begin{center}
\begin{tabular}{|l|l|l|l|}
\hline
$j$ & Representative of  $S_{j} \in \mathcal{X}[S_{0}]$ &
$y_{j}$ & $p(S_{j})$ \\ 
\hline
\hline
$0$ & $e(\rho/2)$ & $1$ & $1$\\
\hline
$1$ &   $\sigma_{2} e(\rho/2) \sigma_{2}^{-1}$ &
$\sigma_{2}^{-1}$ & $1$\\
\hline
$2$ & $\sigma_{1} e(\rho/2) \sigma_{1}^{-1}$ &
$\sigma_{1}^{-1}$ & $1$\\ 
\hline 
$3$ &   $g_{2} e(\rho/2) g_{2}^{-1}$ &
$g_{2}^{-1}$ & $s_{2}$ \\
\hline
$4$ &  $g_{1} e(\rho/2) g_{1}^{-1}$ &
$g_{1}^{-1}$ & $s_{1}$\\
\hline
$5$ &  $\sigma_{2}g_{1} e(\rho/2)
(\sigma_{2}g_{1})^{-1}$ & 
$(\sigma_{2}g_{1})^{-1}$ & $s_{2} s_{1} s_{2}$ \\
\hline
$6$ &  $\sigma_{1}g_{2} e(\rho/2)
(\sigma_{1}g_{2})^{-1}$ & 
$(\sigma_{1}g_{2})^{-1}$ & $s_{1}s_{2} s_{1}$ \\
\hline
$7$ &  $\sigma_{2}\sigma_{1}g_{2} e(\rho/2)
(\sigma_{2}\sigma_{1}g_{2})^{-1}$ & 
$(\sigma_{2}\sigma_{1}g_{2})^{-1}$ & $s_{2}s_{1}s_{2} s_{1}s_{2}  $\\
\hline
$8$ &  $\sigma_{1}\sigma_{2}g_{1} e(\rho/2)
(\sigma_{1}\sigma_{2}g_{1})^{-1}$ & 
$(\sigma_{1}\sigma_{2}g_{1})^{-1}$  & $s_{1}s_{2} s_{1} s_{2}s_{1}$\\
\hline
$9$ &  $g_{2}\sigma_{1}\sigma_{2}g_{1} e(\rho/2)
(g_{2}\sigma_{1}\sigma_{2}g_{1})^{-1}$ & 
$(g_{2}\sigma_{1}\sigma_{2}g_{1})^{-1}$ & $s_{2}s_{1}s_{2} s_{1}
s_{2}s_{1}$ \\  
\hline
\end{tabular}
\captionof{table}{${^\vee}K$-orbit data}\label{orbitdata}
\end{center}

We now turn to the description of the Bruhat order on these ten orbits.
The minimal orbits in the Bruhat order are characterized as the closed orbits
\cite[Proposition 8.3]{HumphLAG}.  
\begin{lem}
\label{closedorb}
The closed $^{\vee}K$-orbits are $S_{0}$, $S_{1}$ and $S_{2}$.
\end{lem}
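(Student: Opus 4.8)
The plan is to use the criterion recalled just before the statement — that the closed ${}^\vee K$-orbits are exactly the minimal elements of the Bruhat order — and to argue in two steps: first that $S_0$, $S_1$, $S_2$ are closed, then that no other orbit is.

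For the first step I would compute stabilisers. By Table~\ref{orbitdata} the orbits $S_0$, $S_1$, $S_2$ are represented by the points $y_j\,{}^\vee B$ with $y_0 = 1$, $y_1 = \sigma_2^{-1}$, $y_2 = \sigma_1^{-1}$; since the $\sigma_k$ are Tits representatives of simple reflections, all three $y_j$ lie in $\mathrm{Norm}_{{}^\vee G_2}({}^\vee T)$, with images $w_0 = 1$, $w_1 = s_2$, $w_2 = s_1$ in $W$. Hence $y_j\,{}^\vee B\,y_j^{-1}$ is the Borel subgroup of ${}^\vee G_2$ with maximal torus ${}^\vee T$ and positive roots $w_j R({}^\vee B,{}^\vee T)$, so $\mathrm{Stab}_{{}^\vee K}(y_j\,{}^\vee B) = {}^\vee K\cap y_j\,{}^\vee B\,y_j^{-1}$ is a connected solvable subgroup of ${}^\vee K$ containing the maximal torus ${}^\vee T$, whose unipotent radical is (by~\eqref{checkK}) the product of the $U_{{}^\vee\beta}$ over those $K$-roots ${}^\vee\beta\in\{\pm({}^\vee\alpha_1+{}^\vee\alpha_2),\ \pm(3{}^\vee\alpha_1+{}^\vee\alpha_2)\}$ with $w_j^{-1}{}^\vee\beta\in R({}^\vee B,{}^\vee T)$. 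As $s_1$ and $s_2$ are involutions, the only root computations needed are $s_2({}^\vee\alpha_1+{}^\vee\alpha_2) = {}^\vee\alpha_1$, $s_2(3{}^\vee\alpha_1+{}^\vee\alpha_2) = 3{}^\vee\alpha_1+2{}^\vee\alpha_2$, $s_1({}^\vee\alpha_1+{}^\vee\alpha_2) = 2{}^\vee\alpha_1+{}^\vee\alpha_2$ and $s_1(3{}^\vee\alpha_1+{}^\vee\alpha_2) = {}^\vee\alpha_2$, all of which are positive roots; so in each of the three cases exactly one root from each of the two $\pm$-pairs of $K$-roots survives and $\mathrm{Stab}_{{}^\vee K}(y_j\,{}^\vee B) = {}^\vee T\,U_{{}^\vee\alpha_1+{}^\vee\alpha_2}\,U_{3{}^\vee\alpha_1+{}^\vee\alpha_2}$, a Borel subgroup of ${}^\vee K\cong\mathrm{SL}_2\times_{\upmu_2}\mathrm{SL}_2$. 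Therefore $S_0$, $S_1$, $S_2$ are closed; equivalently, each has dimension $\dim{}^\vee K - 4 = 2$, which is the minimum possible, since every stabiliser is solvable and hence contained in a Borel of ${}^\vee K$.

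For the second step I would show that each of $S_3,\dots,S_9$ has $S_0$ properly in its closure, hence is not minimal. From Table~\ref{orbitdata}, $S_3$ and $S_4$ are the Cayley transforms of $S_0$ along ${}^\vee\alpha_2$ and ${}^\vee\alpha_1$, so $S_0\subsetneq\overline{S_3}$ and $S_0\subsetneq\overline{S_4}$ by the closure behaviour of Cayley transforms \cite{RS90}; and each of $S_5,\dots,S_9$ is produced from $S_3$ or $S_4$ by a chain of length-increasing cross-actions by complex reflections, together with one further Cayley transform in the case of $S_9$, each step enlarging the closure. Thus the minimal orbits are exactly $S_0$, $S_1$, $S_2$. (Equivalently, the Atlas software \cite{atlas} records the orbit dimensions as $2,2,2,3,3,4,4,5,5,6$, so $S_3,\dots,S_9$ are not of minimal dimension and hence not closed.)

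The stabiliser computation of the first step is routine once Table~\ref{orbitdata} and~\eqref{checkK} are in hand. The one delicate point is the converse: to make it airtight without simply quoting the Atlas output one must check that every construction step carrying $S_0$ up to $S_j$ raises the KGB length, so that it genuinely enlarges the orbit closure — which is precisely the content of the Bruhat order computed (following \cite{Adams-Fokko}) in the rest of this subsection.
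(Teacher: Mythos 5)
Your argument is correct and is essentially the paper's: showing that the stabiliser of $y_j\,{}^{\vee}B$ in ${}^{\vee}K$ is a Borel subgroup of ${}^{\vee}K$ is the same computation as the paper's observation that ${}^{\vee}B\cap{}^{\vee}K$ (respectively $\sigma_j\,{}^{\vee}K\sigma_j^{-1}\cap{}^{\vee}B$) is a Borel subgroup of ${}^{\vee}K$, so that each of $S_0$, $S_1$, $S_2$ is the image of a projective variety and hence closed. The only difference is that you also spell out the converse (that $S_3,\dots,S_9$ are not closed); the paper omits this from the lemma's proof and obtains it immediately afterwards from the Richardson--Springer $m$-action and the Bruhat-order computations, which is also the machinery your sketch of that step ultimately relies on.
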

\begin{proof}
From (\ref{checkK}) we see that
${^\vee}B \cap  {^\vee}K = \langle {^\vee}T, U_{{^\vee}\alpha_{1}+
 {^\vee}\alpha_{2}},  U_{3{^\vee}\alpha_{1}+ {^\vee}\alpha_{2}}
\rangle $ is a Borel subgroup of ${^\vee}K$.  Hence, ${^\vee}K /
({^\vee}B \cap {^\vee}K)$ is a flag variety.  As a flag variety, it is
a projective variety.  There is an obvious injection
$${^\vee}K /({^\vee}B \cap {^\vee}K) \rightarrow
{^\vee}\mathrm{G}_{2}/{^\vee}B$$
and its  image is ${^\vee}K\, {^\vee}B =   {^\vee}K
y_{0}{^\vee}B$, \emph{i.e.}~the orbit $S_{0}$.  Since it is the
image of a projective 
variety, the orbit $S_{0}$ is closed.  Similarly, the orbit $S_{2}$ is of the
form 
\begin{equation}
\label{orbit1}
{^\vee}K y_{2} {^\vee}B = {^\vee}K \sigma_{1}^{-1} {^\vee}B =
\sigma_{1}^{-1} \cdot 
(\sigma_{1} {^\vee}K \sigma_{1}^{-1}) \,{^\vee}B.
\end{equation} 
From (\ref{checkK}) we deduce that 
$$\sigma_{1} {^\vee}K \sigma_{1}^{-1} = \langle {^\vee}T, U_{\pm
  (2{^\vee}\alpha_{1}+ 
 {^\vee}\alpha_{2})},  U_{\pm {^\vee}\alpha_{2}} \rangle.$$
Clearly, we may replace ${^\vee}K$ with $\sigma_{1} {^\vee}K
\sigma_{1}^{-1}$ in the  argument for $S_{0}$ to conclude that
$(\sigma_{1} {^\vee}K \sigma_{1}^{-1}) \,{^\vee}B$ is closed in
${^\vee}G_{2}/ {^\vee}B$.  The orbit of $S_{2}$ is then closed by
(\ref{orbit1}).  The same reasoning applies to prove that the orbit of
$S_{1}$ is closed.
\end{proof}

By Lemma \ref{closedorb}, $S_{0}, S_{1}$ and $S_{2}$ minimal in
the Bruhat order.   With the aim of placing the remaining orbits
$S_{3}, \ldots , S_{9}$ 
in the Bruhat order, we use an action defined by Richardson and
Springer  of simple root reflections on the orbits 
 \cite[4.7]{RS90}.  Given a simple root
${^\vee}\alpha_{j}$ and an orbit ${^\vee}Ky_{\ell} {^\vee}B$, they
produce a union of orbits $P_{s_{j}}({^\vee}Ky_{\ell} {^\vee}B)$ 
containing a unique open orbit $m(s_{j})({^\vee}Ky_{\ell}
{^\vee}B)$.  
%  $V$ in [RS90] is defined in 1.2.  It is equal to $\{y_{0}^{-1},
%  \dldots, y_{9}^{-1}\}.   The Weyl group action on $V$ is
%  defined in 2.  This action appears in 4.3.4.
This defines an action $m$ of the simple reflections on the set of
${^\vee}K$-orbits.   
By \cite[Lemma 4.6]{RS90}, every orbit is obtained by iterating this
action.  More precisely, every orbit ${^\vee}Ky_{j} {^\vee}B$
is of the form  
\begin{equation}
\label{RSorbit}
{^\vee}Ky_{j} {^\vee}B = m(w_{k})m(w_{k-1}) \cdots m(w_{1})
({^\vee}Ky_{\ell} {^\vee}B),
\end{equation} 
for some choice of simple root reflections $w_{1}, \ldots, w_{k} \in W$  
and some  closed orbit ${^\vee}Ky_{\ell} {^\vee}B$.  Furthermore, the
closure of the orbit ${^\vee}Ky_{j} {^\vee}B$ is 
\begin{equation}
\label{RSclosure}
\overline{{^\vee}Ky_{j} {^\vee}B} = P_{w_{k}} \cdots
P_{w_{1}}({^\vee}Ky_{\ell} {^\vee}B).
\end{equation} 
An elementary and explicit description of the terms on the right is given in
\cite[4.3]{RS90}.
% complex/real/imaginary for $v$ defined in 1.6.  To see that this definition is
% equivalent to the one in IC4, observe that $\alpha$ is imginary for
% $\dot(y)$ in the sense of [RS90] iff $Ad(e(\rho/2) \dot(y))(\alpha)
% = Ad(y) (\alpha)$ iff $Ad(y^{-1} e(\rho/2) \dot(y))
% \alpha = \alpha$  iff $Ad (y) \alpha = \alpha$.  The last equation
% is the definition of $\alpha$ to be imaginary in IC4.    
 It is  therefore not
difficult to compute that all of the remaining orbits  are actually
obtained as in (\ref{RSorbit}) with $\ell=0$, \emph{i.e.}~from 
the closed orbit ${^\vee}K y_{0} {^\vee}B = {^\vee}K\,
{^\vee}B$.  The orbit closures are then easily computed from
(\ref{RSclosure}) with $\ell = 0$. 
%\begin{center}
%\begin{tabular}{|l|l|}
%\hline
%$j$ & ${^\vee}Ky_{j} {^\vee}B$ \\
%\hline
%\hline
%3 & $m(s_{2}) ({^\vee}K \, {^\vee}B)$ \\
%\hline
%4 &  $m(s_{1}) ({^\vee}K \, {^\vee}B)$ \\
%\hline
%5 &  $m(s_{2}) m(s_{1}) ({^\vee}K \, {^\vee}B)$\\
%\hline
%6 &  $m(s_{1})m(s_{2}) ({^\vee}K \, {^\vee}B)$\\
%\hline
%7  & $m(s_{2}) m(s_{1})m(s_{2}) ({^\vee}K \, {^\vee}B)$ \\
%\hline
%8 &  $m(s_{1})m(s_{2}) m(s_{1}) ({^\vee}K \, {^\vee}B)$\\
%\hline
%9 & $m(s_{2})m(s_{1})m(s_{2}) m(s_{1}) ({^\vee}K \, {^\vee}B)$ \\
%\hline
%\end{tabular}
%\captionof{table}{${^\vee}K$-orbits obtained from $m$-action}\label{orbitdata2}
%\end{center}

For example, the simple root ${^\vee}\alpha_{1}$ is  imaginary  for $p(S_{0})
= 1 \in W$ (\cite[1.6]{RS90}, where $\varphi$ there is replaced by $p$
here). 
% there are bijections S_{j} \mapsto {^\vee}K y_{j}^{-1} B
% \mapsto {^\vee}B S_{j} {\vee}K \maps to TS_{j}K, where the second
% map is induced by inversion and the last map is Theorem 1.3
% \cite{RS90}
\cite[4.3.4]{RS90} tells us that for some $3\leq j \leq 9$
\begin{align*}
\overline{{^\vee}Ky_{j} {^\vee}B} &= \overline{m(s_{1})({^\vee}K \,
     {^\vee}B)} \\
&= P_{s_{1}}({^\vee}K\, {^\vee}B)\\
& = m(s_{1})({^\vee}K \,
     {^\vee}B) \cup ({^\vee}K\, {^\vee}B) \cup ({^\vee}K y_{2} {^\vee}B)\\
\end{align*}
We then use  \cite[Lemma 7.4 (i)]{RS90}
% $s \circ a$ is defined in 3.1 [RS90]
 to conclude that
$p(S_{j}) = s_{1}$.  Table \ref{orbitdata} identifies the orbit number
as $j=4$, \emph{i.e.}~${^\vee}Ky_{4} {^\vee}B = m(s_{1})({^\vee}K \,
     {^\vee}B)$.  
Identical reasoning with the simple root $\alpha_{2}$ shows that
${^\vee}Ky_{3} {^\vee}B = m(s_{2})({^\vee}K \, {^\vee}B)$.

To go one step further, we argue from ${^\vee}K y_{4}
{^\vee}B$. The simple root 
$\alpha_{2}$ is complex for $p(S_{4}) = s_{1}$.  The orbit ${^\vee}K
y_{j} {^\vee}B = m(s_{2})({^\vee}K y_{4} {^\vee}B )$
satisfies $p(S_{j}) = s_{2}s_{1}s_{2}$ and so $j=5$ from Table
\ref{orbitdata}.  Thus, ${^\vee}K
y_{5} {^\vee}B = m(s_{2})({^\vee}K y_{4} {^\vee}B )$, and
the closure of ${^\vee}K y_{5} {^\vee}B$ is  
$P_{s_{2}}P_{s_{1}}({^\vee}K\, {^\vee}B)$. 
Our work on the earlier orbits and
\cite[4.3.1]{RS90} allow us to compute the closure as 
\begin{align*}
&P_{s_{2}} \left( \, ({^\vee}Ky_{4} {^\vee}B )\cup ({^\vee}Ky_{0}
{^\vee}B) \cup ({^\vee}Ky_{1} {^\vee}B) \, \right)\\
&  =  ({^\vee}Ky_{5}
{^\vee}B) \cup ({^\vee}Ky_{4} {^\vee}B) \cup  ({^\vee}Ky_{3}
{^\vee}B) \cup  ({^\vee}Ky_{2} {^\vee}B) \cup ({^\vee}Ky_{1}
{^\vee}B) \cup ({^\vee}Ky_{0} {^\vee}B).
\end{align*}
% $m(s_{1}) {^\vee}K y_{1} {^\vee}B$ is a 

A summary of the closure relations
between the orbits is given in the following Hasse diagram of the
Bruhat order.
\begin{align}\label{eq:BruhatOrbits}
\xymatrix{
& & *+[F]{S_{9}} \ar@{-}[dr]_(.7){s_{2}}  \ar@{-}[dl]^(.7){s_{1}}& &\\
&  *+[F] {S_{7}} \ar@{.}[d] \ar@{-}[drr]_(.7){s_{2}}|\hole &  &
  *+[F]{S_{8}}  \ar@{.}[d] 
  \ar@{-}[dll]^(.7){s_{1}}&\\ 
& *+[F]{S_{5}}  \ar@{.}[d] \ar@{-}[drr]_(.7){s_{2}}|\hole & & *+[F]{S_{6}} \ar@{.}[d]
  \ar@{-}[dll]^(.7){s_{1}} & \\
& *+[F]{S_{3}}  \ar@{-}[dl]^{s_{2}} \ar@{-}[dr]_{s_{2}} & &  *+[F]{S_{4}}
  \ar@{-}[dl]^{s_{1}} 
  \ar@{-}[dr]_{s_{1}} & \\
 *+[F]{S_{1}} & &  *+[F]{S_{0}} &  & *+[F]{S_{2}}}
 \end{align}
In this diagram a solid line indicates that the higher orbit is
obtained from the lower one by applying $m$ of the labelled simple
reflection.  For example $S_{4} = m(s_{1}) S_{0}$.  If one orbit is
contained in the closure of the other, but is not related through the
action of $m$  then the line is dotted. 

In the foregoing discussion we have not gone into any detail about what
it means for a root to be (compact/non-compact) imaginary, real or
complex relative to $S_{j}$.   The reader may
wish to review the equivalent definitions given in \cite[1.6]{RS90}
and \cite[12]{Adams-Fokko}.  Apart from the computations summarized above,
in Section \ref{cohcont} it is important  to know 
the nature of the simple roots relative to $S_{j}$.  We have used the
Atlas software to do the elementary computations for the following table. 
\begin{center}
\begin{tabular}{|l|l|l|}
\hline
$S_{j}$ & ${^\vee}\alpha_{1}$ & ${^\vee}\alpha_{2}$\\
\hline \hline 
$S_{0}$ & imaginary non-compact & imaginary non-compact \\
\hline
$S_{1}$ &   imaginary compact & imaginary non-compact\\
\hline
$S_{2}$ & imaginary non-compact & imaginary compact\\ 
\hline 
$S_{3}$ &  complex & real \\
\hline
$S_{4}$ &  real & complex \\
\hline
$S_{5}$ & complex & complex \\
\hline
$S_{6}$ &  complex & complex \\
\hline
$S_{7}$ & imaginary non-compact & complex \\
\hline
$S_{8}$ & complex & imaginary non-compact \\
\hline
$S_{9}$ &  real & real \\  
\hline
\end{tabular}
\captionof{table}{The nature of the simple roots relative to
  $S_{j}$}\label{orbitdata3} 
\end{center}

\subsection{Langlands Component groups}
\label{compgroupsec}

To each L-parameter there is attached a Langlands component
group. Recall from the introduction that each L-parameter
corresponds to a ${^\vee}K$-orbit $S$.  The Langlands component group is
important as its irreducible characters  parameterize  the
irreducible local systems $\mathcal{V}$ on $S$  \cite[Lemma 7.3
  (e)]{ABV}.  These local systems are the ones appearing in the
complete geometric parameters $\xi = (S,\mathcal{V})$.

We review the definitions and compute the Langlands component groups
in the context of $\mathrm{G}_{2}$.   
In the present context, an L-parameter is a homomorphism
$\varphi: W_{\mathbb{R}} \rightarrow {^\vee}\mathrm{G}_{2}$, where
$W_{\mathbb{R}} = \mathbb{C}^{\times} \cup j \, \mathbb{C}^{\times}$
is the Weil group of $\mathbb{C}/\mathbb{R}$.  
Without loss of generality, we may assume that
\begin{equation}
\label{lparam}
\varphi(z) = z^{\lambda} \bar{z}^{\mathrm{Ad}(y) \lambda}, \quad z
\in \mathbb{C}^{\times},
\end{equation}
where $y = \exp(\uppi i \lambda)  \varphi(j)$ and $\lambda \in
{^\vee}\mathfrak{t}$ is dominant relative to ${^\vee}B$
(\emph{cf.}~\cite[(5.7)]{ABV}). 
Clearly, the L-parameter 
$\varphi$ is uniquely determined by the pair $(y,\lambda)$ and so we
may write $\varphi = \varphi(y, \lambda)$. 

In this section we assume 
that the infinitesimal character $\lambda$ is dominant, regular and
integral (\ref{regint}).
Then $y^{2} = e(\lambda) = 1$.  According to \cite[Proposition 6.16
  and Proposition 6.17]{ABV}, the equivalence classes of L-parameters
with fixed regular integral infinitesimal character $\lambda$ are in
bijection with the union  of the ${^\vee}K$-orbits of
${^\vee}\mathrm{G}_{2}/ {^\vee}B$ and the
$^{\vee}\mathrm{G}_{2}$-orbit of ${^\vee}\mathrm{G}_{2}/ {^\vee}B$ (a
singleton).  We shall ignore the ${^\vee}\mathrm{G}_{2}$-orbit.  The 
${^\vee}K$-orbits are given explicitly in Table \ref{orbitdata}
and using the data listed there the bijection is induced by
\begin{equation}
\label{lparambij}
\varphi(S_{j}, \lambda) \longmapsto {^\vee}K y_{j} {^\vee}B.
\end{equation}
Set 
$$\varphi_{j} = \varphi(S_{j}, \lambda), \quad  0 \leq j \leq 9$$
so that $\{ \varphi_{0}, \ldots , \varphi_{9}\}$. Apart from the
L-parameter corresponding to the ${^\vee}\mathrm{G}_{2}$-orbit, this
is a complete set of 
representatives of the equivalence classes of L-parameters with
infinitesimal character $\lambda$.  

The Langlands component group of
$\varphi_{j}$ is defined as the component group of the 
centralizer of the image of $\varphi_{j}$ in ${^\vee}\mathrm{G}_{2}$.  By
\cite[Corollary 5.9 (c) and Lemma 12.10]{ABV}, the Langlands component
group of $\varphi_{j}$ is isomorphic to the component group of the
fixed-point subgroup 
${^\vee}T^{S_{j}}$ of ${^\vee}T$ under conjugation by $S_{j}$
(\emph{cf.}~\cite[(12.11)]{ABV}).  Evidently, the subgroup ${^\vee}T^{S_{j}}$
is equal to the fixed-point subgroup ${^\vee}T^{p(S_{j})}$ under the
Weyl group action of $p(S_{j})$. 

Let us find the Langlands component group of $\varphi_{j}$ by
identifying it with the component group of ${^\vee}T^{p(S_{j})}$ and
referring to  Table \ref{orbitdata}.  For $j = 0,1,2$ the Weyl group
element $p(S_{j})$ is trivial and so ${^\vee}T^{p(S_{j})} = {^\vee}T$.
Since the torus ${^\vee}T$ is connected, the Langlands component groups are
trivial. 

For the other extreme, consider $\varphi_{9}$.  In this case
$p(S_{9})$ is the long Weyl group element which negates all roots.
As in the proof of Lemma \ref{Iorbits}, we write an  element in ${^\vee}T$ as
$e(c_{1} \alpha_{1} + c_{2} \alpha_{2})$ where $c_{1},
c_{2} \in \mathbb{C}$.  Then $e(c_{1} \alpha_{1} + c_{2}
\alpha_{2})$ belongs to ${^\vee}T^{p(S_{9})}$ if and only if 
\begin{align*}
& p(S_{9}) \cdot (c_{1} \alpha_{1} + c_{2}  \alpha_{2})
- (c_{1} \alpha_{1} + c_{2} \alpha_{2}) \in \ker e =
X^{*}(T) \\
& \Longleftrightarrow -2c_{1}\alpha_{1} - 2c_{2} \alpha_{2} \in
X^{*}(T)\\
& \Longleftrightarrow c_{1}, c_{2} \in \frac{1}{2}\mathbb{Z}.
\end{align*}
It follows that ${^\vee}T^{p(S_{9})}$ is generated by
$e(\alpha_{1}/2)$ and $e(\alpha_{2}/2)$--a Klein 4-group.  We conclude
that the Langlands component group of $\varphi_{9}$ is isomorphic to
$\mathbb{Z}/ 2 \mathbb{Z} \times \mathbb{Z}/2 \mathbb{Z}$.  

Now consider $\varphi_{4}$, for which $p(S_{4}) = s_{2}$.  Arguing as
we did for $\varphi_{9}$,  we see that   $e(c_{1} \alpha_{1} + c_{2}
\alpha_{2})$ belongs to ${^\vee}T^{s_{2}}$ if and only if 
\begin{align*}
& s_{2} \cdot (c_{1} \alpha_{1} + c_{2}  \alpha_{2})
- (c_{1} \alpha_{1} + c_{2} \alpha_{2}) \in \
X^{*}(T) \\
& \Longleftrightarrow c_{1}(\alpha_{1}+\alpha_{2}) - c_{2} \alpha_{2}
- (c_{1} \alpha_{1} + c_{2} \alpha_{2}) \in X^{*}(T)\\
& \Longleftrightarrow c_{1}-2c_{2} \in \mathbb{Z}.
\end{align*}
This implies that 
\begin{align*}
{^\vee}T^{s_{2}} &= \{ e((2c_{2} + \ell) \alpha_{1} + c_{2}
\alpha_{2}) : c_{2} \in \mathbb{C}, \, \ell \in \mathbb{Z}\}\\
& = \{ e(c_{2}(2 \alpha_{1} + 
\alpha_{2})) : c_{2} \in \mathbb{C}\}.
\end{align*}
The group ${^\vee}T^{s_{2}}$ is the image of the connected
space $\mathbb{C}$, and so is itself connected.  We conclude that the Langlands
component group of $\varphi_{4}$ is trivial.

The remaining cases are $\varphi_{5}, \varphi_{6}, \varphi_{7}$ and
$\varphi_{8}$.  In each 
of these cases $p(S_{j}) \in W$ is a reflection.  One may argue as we did
for ${^\vee}T^{s_{2}}$ to prove that ${^\vee}T^{p(S_{j})}$ is
connected.  In summary, the only non-trivial Langlands component group
is the Langlands component group of $\varphi_{9}$, which is a Klein
4-group. 

\subsection{Orbits on flag varieties for regular non-integral infinitesimal
  character}\label{section:FlagNonIntegral} 
 
In this section we consider L-parameters $\varphi = 
\varphi(y,\lambda)$ as in (\ref{lparam}), but remove the assumption
of integrality on $\lambda \in {^\vee}\mathfrak{t}$.  We merely assume
that $\lambda$ is regular \ref{reg}.  In this more general setting,
\cite[Section 6]{ABV} tells us that  the
equivalence class of $\varphi$ corresponds to a certain orbit of the 
the flag variety ${^\vee}\mathrm{G}_{2}(\lambda)/ \,{^\vee}B(\lambda)$. Here,
${^\vee}\mathrm{G}_{2}(\lambda)$ is the centralizer of $e(\lambda)$ in
$\mathrm{G}_{2}$, and  ${^\vee}B(\lambda)$ is the unique Borel subgroup of
${^\vee}\mathrm{G}_{2}(\lambda)$ determined by the regular element
$\lambda$.  Our first task is to determine all of the possible groups
${^\vee}\mathrm{G}_{2}(\lambda)$.  Once this is complete, we will
find the analogues of ${^\vee}K$ for each
${^\vee}\mathrm{G}_{2}(\lambda)$ and describe their orbits on
${^\vee}\mathrm{G}_{2}(\lambda)/ \,{^\vee}B(\lambda)$.  

\begin{prop}
\label{glambda}
The group ${^\vee}\mathrm{G}_{2}(\lambda)$ is isomorphic to one of the following
complex algebraic  groups:
$${^\vee}T, \ \mathrm{GL}_{2}, \
\mathrm{SL}_{2} \times_{\upmu_{2}} \mathrm{SL}_{2}, \ \mathrm{SL}_{3}, \
\mathrm{G}_{2}.$$
\end{prop}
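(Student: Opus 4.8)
The plan is to repeat the analysis of Section~\ref{groupK}, now with the general semisimple element $e(\lambda)=\exp(2\uppi i\lambda)$ in place of $e(\rho/2)$. Write $C={^\vee}\mathrm{G}_{2}(\lambda)$ for the centralizer of $e(\lambda)$ in ${^\vee}\mathrm{G}_{2}$. Since $\lambda\in{^\vee}\mathfrak{t}$ we have $e(\lambda)\in{^\vee}T$, so ${^\vee}T\subseteq C$ is a maximal torus. As ${^\vee}\mathrm{G}_{2}$ is simply connected, Steinberg's theorem \cite[Theorem~2.11]{HumphCC} shows $C$ is connected, and \cite[Theorem~2.2]{HumphCC} shows that $C$ is generated by ${^\vee}T$ together with the root subgroups $U_{{^\vee}\alpha}$ for which ${^\vee}\alpha(e(\lambda))=1$; equivalently, $C$ is the connected reductive group with maximal torus ${^\vee}T$ and root system
$$R_{\lambda}=\{\,{^\vee}\alpha\in R({^\vee}\mathrm{G}_{2},{^\vee}T):{^\vee}\alpha(\lambda)\in\mathbb{Z}\,\}.$$
Thus $C$ is determined by the root datum $(X^{*}({^\vee}T),R_{\lambda},X_{*}({^\vee}T),R_{\lambda}^{\vee})$, in which $X^{*}({^\vee}T)$ is the root lattice of ${^\vee}\mathrm{G}_{2}$ (which coincides with its weight lattice because $\mathrm{G}_{2}$ has trivial centre) and $X_{*}({^\vee}T)$ is the coroot lattice.

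The first task is to classify the possible $R_{\lambda}$. Since ${^\vee}\alpha(\lambda),{^\vee}\beta(\lambda)\in\mathbb{Z}$ forces $({^\vee}\alpha+{^\vee}\beta)(\lambda)\in\mathbb{Z}$, the set $R_{\lambda}$ is a closed subsystem of $R({^\vee}\mathrm{G}_{2},{^\vee}T)$. By the Borel--de Siebenthal procedure, or simply by inspecting the twelve roots of $\mathrm{G}_{2}$ as in Section~\ref{groupK}, every closed subsystem is $W$-conjugate to exactly one of
\[
\varnothing,\qquad A_{1}\ \text{(short)},\qquad A_{1}\ \text{(long)},\qquad A_{1}\times A_{1},\qquad A_{2},\qquad \mathrm{G}_{2},
\]
where the $A_{2}$ is the subsystem of all long roots; the six short roots do \emph{not} form a closed subsystem (the sum of two suitable short roots is long), so no short $A_{2}$ arises. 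Each of these types is realized by a regular $\lambda$: the case $A_{1}\times A_{1}$ already occurs for $\lambda=\rho/2$ by Section~\ref{groupK}, and the remaining cases are produced by prescribing the values ${^\vee}\alpha_{1}(\lambda)$ and ${^\vee}\alpha_{2}(\lambda)$ appropriately (one integral and one irrational for an $A_{1}$; a value in $\tfrac{1}{3}\mathbb{Z}\smallsetminus\mathbb{Z}$ on the short simple root together with an integer on the long one for the long $A_{2}$; two $\mathbb{Q}$-linearly independent irrationals for $\varnothing$; two integers for $\mathrm{G}_{2}$), always keeping every root non-vanishing on $\lambda$ so that \eqref{reg} holds.

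It then remains to read off $C$ from each $R_{\lambda}$. If $R_{\lambda}=\varnothing$ then $C={^\vee}T$, and if $R_{\lambda}=R({^\vee}\mathrm{G}_{2},{^\vee}T)$ then $C={^\vee}\mathrm{G}_{2}$. If $R_{\lambda}$ is an $A_{1}$ generated by a root ${^\vee}\alpha$, then $C$ has rank $2$ and semisimple rank $1$; here I would use that every root of $\mathrm{G}_{2}$ is primitive in the root lattice $X^{*}({^\vee}T)$ and, dually, that every coroot is primitive in $X_{*}({^\vee}T)$ (the coroots of $\mathrm{G}_{2}$ again form a $\mathrm{G}_{2}$-system). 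Completing ${^\vee}\alpha$ to a $\mathbb{Z}$-basis of $X^{*}({^\vee}T)$ and comparing with the dual basis identifies the root datum with that of $\mathrm{GL}_{2}$: primitivity of ${^\vee}\alpha$ rules out $\mathrm{SL}_{2}\times\mathbb{G}_{m}$ and primitivity of ${^\vee}\alpha^{\vee}$ rules out $\mathrm{PGL}_{2}\times\mathbb{G}_{m}$. If $R_{\lambda}\cong A_{1}\times A_{1}$, its two orthogonal generators are one long and one short root, so after conjugating by a Weyl-group element $R_{\lambda}$ becomes the pair $\pm({^\vee}\alpha_{1}+{^\vee}\alpha_{2})$, $\pm(3{^\vee}\alpha_{1}+{^\vee}\alpha_{2})$ of \eqref{checkK}; then $C$ is conjugate to the group ${^\vee}K$ of Section~\ref{groupK}, hence $C\cong\mathrm{SL}_{2}\times_{\upmu_{2}}\mathrm{SL}_{2}$. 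Finally, if $R_{\lambda}=A_{2}$ (the long roots), then $C$ is semisimple of type $A_{2}$; the root lattice of this $A_{2}$ has index $3$ in $X^{*}({^\vee}T)$, while $X^{*}({^\vee}T)$ is sandwiched between the root and weight lattices of the $A_{2}$ (it pairs integrally with the long coroots), and since $[P(A_{2}):Q(A_{2})]=3$ this forces $X^{*}({^\vee}T)=P(A_{2})$; hence $C\cong\mathrm{SL}_{3}$, not $\mathrm{PGL}_{3}$.

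The main obstacle is this last step: $C$ need not be simply connected --- indeed $\mathrm{SL}_{2}\times_{\upmu_{2}}\mathrm{SL}_{2}$ already occurs --- so one cannot read off the isogeny type of $C$ from its root system alone, and must track the character and cocharacter lattices of ${^\vee}T$ and the way $R_{\lambda}$ sits inside them. Everything else reduces to the finite combinatorics of closed subsystems of $\mathrm{G}_{2}$ or to the computation already carried out in Section~\ref{groupK}.
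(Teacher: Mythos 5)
Your proposal is correct, and its skeleton coincides with the paper's: Steinberg's theorems give connectedness of ${^\vee}\mathrm{G}_{2}(\lambda)$ and its generation by ${^\vee}T$ together with the root subgroups of the roots pairing integrally with $\lambda$, after which everything reduces to listing the possible root subsystems and identifying the resulting reductive groups. Where you diverge is in how those two steps are executed. The paper pins down the possible subsystems by an explicit line-arrangement/tessellation argument in the real coordinates of $\lambda$ (which simultaneously shows each case is realized), and identifies the groups by concrete generators: for an $A_{1}$ it writes ${^\vee}\mathrm{G}_{2}(\lambda)$ as a product of commuting subgroups $\langle\alpha_{\perp}(\mathbb{C}^{\times})\rangle\,\langle\alpha(\mathbb{C}^{\times}),U_{\pm{^\vee}\alpha}\rangle$ meeting in $\upmu_{2}$, and for the long $A_{2}$ it asserts that the two long-root $\mathrm{SL}_{2}$'s already contain ${^\vee}T$ and generate a copy of $\mathrm{SL}_{3}$. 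You instead classify the closed subsystems combinatorially (Borel--de Siebenthal, noting the short roots are not closed), and settle the isogeny types by root-datum bookkeeping: primitivity of roots and coroots in $X^{*}({^\vee}T)$ and $X_{*}({^\vee}T)$ forces $\mathrm{GL}_{2}$ in the $A_{1}$ cases, and the index-$3$ sandwich $Q(A_{2})\subset X^{*}({^\vee}T)\subset P(A_{2})$ forces $X^{*}({^\vee}T)=P(A_{2})$, i.e.\ $\mathrm{SL}_{3}$ rather than $\mathrm{PGL}_{3}$. Your lattice argument makes the isogeny-type determinations---the only genuinely delicate point, as you observe---completely transparent (the paper's $\mathrm{SL}_{3}$ identification is left largely as an assertion), while the paper's explicit generator description is what gets reused later, e.g.\ in Proposition \ref{Ilambdaorbits} and Section \ref{section:FlagNonIntegral}, where concrete realizations such as the block-diagonal $\mathrm{GL}_{2}\subset\mathrm{SL}_{3}$ are needed; either route proves the proposition.
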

\begin{proof}
 By \cite[Theorem 2.2 and Theorem
  2.11]{HumphCC}  the centralizer ${^\vee}\mathrm{G}_{2}(\lambda)$ of
 $e(\lambda)$ 
is connected, reductive and equal to
$$\langle {^\vee}T, U_{\pm {^\vee}\alpha} : {^\vee}\alpha(e(\lambda))
= 1 \rangle = \langle {^\vee}T, U_{\pm {^\vee}\alpha} : \langle
{^\vee}\alpha, \lambda \rangle
\in \mathbb{Z} \rangle.$$
The root system of ${^\vee}\mathrm{G}_{2}(\lambda)$ is 
$$R({^\vee}\mathrm{G}_{2}(\lambda), {^\vee}T) = \{ {^\vee}\alpha \in
R({^\vee}\mathrm{G}_{2}, {^\vee}T) : \langle {^\vee}\alpha, \lambda \rangle \in
\mathbb{Z} \}.$$
As a subsystem of $R({^\vee}\mathrm{G}_{2}, {^\vee}T)$, there are only three
nonempty possibilities for $R({^\vee}\mathrm{G}_{2}(\lambda),
{^\vee}T)$, namely those of type $\mathrm{A}_{1}$, 
$\mathrm{A}_{1} \times \mathrm{A}_{1}$, $\mathrm{A}_{2}$ and $\mathrm{G}_{2}$.
Writing $\lambda = c_{1} \alpha_{1} + c_{2} \alpha_{2}$ for
$c_{1}, c_{2} \in \mathbb{C}$, we see that a positive root ${^\vee}\alpha \in
R({^\vee}\mathrm{G}_{2},{^\vee}T)$ belongs to
$R({^\vee}\mathrm{G}_{2}(\lambda),{^\vee}T)$ if and only if 
$$c_{1} \langle \, {^\vee}\alpha, \alpha_{1} \rangle + c_{2} \langle
\, {^\vee}\alpha, 
\alpha_{2} \rangle =m \in \mathbb{Z}.$$
This complex equation is equivalent to  the two families of real lines
\begin{align*} 
&  a_{1} \langle \, {^\vee}\alpha, \alpha_{1}
  \rangle + a_{2} \langle \, {^\vee}\alpha, \alpha_{2} \rangle = m \in
  \mathbb{Z},\\
&  b_{1} \langle \, {^\vee}\alpha, \alpha_{1}
  \rangle + b_{2} \langle \, {^\vee}\alpha, \alpha_{2} \rangle = 0,
\end{align*}
where $c_{1} = a_{1} + i b_{1}$ and $c_{2} = a_{2}+ib_{2}$ are in standard
form.  One may always take $b_{1} =
b_{2}= 0$ to obtain a point on the second line. We therefore focus on
the first family of lines.  As $m$ runs 
over all integers and ${^\vee}\alpha$ runs over all positive roots the
first equation tessellates the real plane.  By choosing $(a_{1},
a_{2}) \in \mathbb{R}^{2}$ so that it does not lie on any of the lines we obtain
$\lambda$ such that $R({^\vee}\mathrm{G}_{2}(\lambda),{^\vee}T) =
\emptyset $ and ${^\vee}\mathrm{G}_{2}(\lambda) =  {^\vee}T \cong
\mathrm{GL}_{1} \times \mathrm{GL}_{1}$.
A nonempty root system is given by choosing $(a_{1}, a_{2})$ to lie on a
unique line for a fixed $m \in \mathbb{Z}$ and positive root
${^\vee}\alpha$.  In this case
$R({^\vee}\mathrm{G}_{2}(\lambda),{^\vee}T) = \{ \pm {^\vee}\alpha \}$
and  ${^\vee}\mathrm{G}_{2}(\lambda) =
\langle {^\vee}T, U_{\pm {^\vee}\alpha} \rangle$.  Setting
${^\vee}\alpha_{\perp}$ to be the positive root orthogonal to
${^\vee}\alpha$, we see that $\alpha, \alpha_{\perp} \in
X_{*}({^\vee}T)$ generate ${^\vee}T$.  Consequently,  
$${^\vee}\mathrm{G}_{2}(\lambda) =
\langle {^\vee}T, U_{\pm {^\vee}\alpha} \rangle = \langle
\alpha_{\perp}(\mathbb{C}^{\times}) \rangle \ \langle
\alpha(\mathbb{C}^{\times}), U_{\pm {^\vee}\alpha} \rangle,$$
a product of commuting groups.
Computations similar to those made for ${^\vee}K$ at the end of
Section \ref{groupK} reveal that $\langle
\alpha_{\perp}(\mathbb{C}^{\times}) \rangle \cap \langle
\alpha(\mathbb{C}^{\times}), U_{\pm {^\vee}\alpha} \rangle$ equals
$\langle \alpha(-1) \rangle \cong \upmu_{2}$.  Hence,
$${^\vee}\mathrm{G}_{2}(\lambda) \cong \mathrm{GL}_{1}
\times_{\upmu_{2}} \mathrm{SL}_{2} \cong \mathrm{GL}_{2}. $$

Finally, distinct positive roots
${^\vee}\alpha, {^\vee}\beta \in R(^{\vee}\mathrm{G}_{2}, {^\vee}T)$
produce non-parallel lines and so we may 
choose $(a_{1}, a_{2})$ so that
\begin{align*}
& a_{1} \langle \, {^\vee}\alpha, \alpha_{1}
  \rangle + a_{2} \langle \, {^\vee}\alpha, \alpha_{2} \rangle = m\\
& a_{1} \langle \, {^\vee}\beta, \alpha_{1}
  \rangle + a_{2} \langle \, {^\vee}\beta, \alpha_{2} \rangle = \ell
\end{align*}
for some $\ell, m \in \mathbb{Z}$.  This implies that ${^\vee}\alpha,
{^\vee}\beta \in R({^\vee}\mathrm{G}_{2}(\lambda), {^\vee}T)$.
Clearly, 
$$ a_{1} \langle \, {^\vee}\alpha \pm {^\vee}\beta, \alpha_{1}
  \rangle + a_{2} \langle \, {^\vee}\alpha \pm ^{\vee}\beta, \alpha_{2}
  \rangle = m \pm \ell \in \mathbb{Z},$$ 
which implies that the root subsystem of $R(^{\vee}\mathrm{G}_{2}, {^\vee}T)$
generated by ${^\vee}\alpha$ and 
${^\vee}\beta$ lies in $R({^\vee}\mathrm{G}_{2}(\lambda), {^\vee}T)$
\cite[9.4]{Humphreys}.   
The resulting root subsystem is of type $\mathrm{A}_{1}
\times \mathrm{A}_{1}, \mathrm{A}_{2}$ or $\mathrm{G}_{2}$.  As explained
at the end of  Section \ref{groupK},  if the root
subsystem is of type $\mathrm{A}_{1}
\times \mathrm{A}_{1}$, the group ${^\vee}\mathrm{G}_{2}(\lambda)$ is isomorphic
to ${^\vee}K \cong \mathrm{SL}_{2} \times_{\upmu_{2}} \mathrm{SL}_{2}$.  If the root
subsystem is of type $\mathrm{A}_{2}$ then
$${^\vee}\mathrm{G}_{2}(\lambda) = \langle {^\vee}T, {^\vee} U_{\pm
  {^\vee}\alpha_{2}} , U_{\pm (3{^\vee}\alpha_{1}+
  {^\vee}\alpha_{2})} \rangle  = \langle {^\vee} U_{\pm
  {^\vee}\alpha_{2}} , U_{\pm (3{^\vee}\alpha_{1}+
  {^\vee}\alpha_{2})} \rangle \cong \mathrm{SL}_{3}.$$
% ${^\vee}T$ is a two-dimensional torus generated by the tori of the
% two SL_{2}-subgroups
\end{proof}

Having identified the groups ${^\vee}\mathrm{G}_{2}(\lambda)$, we may
determine the analogues of the group ${^\vee}K$ for 
${^\vee}\mathrm{G}_{2}(\lambda)$.  We denote these analogues by
${^\vee}K(\lambda)$.  Each ${^\vee}K(\lambda)$ is defined as the
centralizer ${^\vee}\mathrm{G}_{2}(\lambda)^{y}$ of an
element $y \in {^\vee}\mathrm{G}_{2}(\lambda)$ satisfying
$y^{2} = e(\lambda)$ \cite[Propositions 6.13 and 6.16]{ABV}.
Furthermore, the
elements $y$  are only of interest up to conjugation by
${^\vee}\mathrm{G}_{2}(\lambda)$, and there are finitely many such
conjugacy classes.  To put it more clearly, let
$$\mathcal{I}(\lambda) = \{y \in  {^\vee}\mathrm{G}_{2}(\lambda) :
y^{2} = e(\lambda)\}.$$
% This matches (6.10)(f) ABV by the proof of Proposition 6.13 ABV
The group ${^\vee}\mathrm{G}_{2}(\lambda)$ acts by conjugation on
$\mathcal{I}(\lambda)$ with finitely many orbits, and the relevant groups
${^\vee}K(\lambda)$ are given by taking the centralizer of a representative
for each orbit.  

\begin{prop}
\label{Ilambdaorbits}
The possibilities for ${^\vee}K(\lambda)$, up to
isomorphism, are given in the following
table
\begin{center}
\begin{tabular}{|l|l|}
\hline
${^\vee}\mathrm{G}_{2}(\lambda)$ & ${^\vee}K(\lambda)$ \\
\hline \hline
${^\vee}T$ & ${^\vee}T$  \\ 
\hline 
$\mathrm{GL}_{2}$ & ${^\vee}T$, $\mathrm{GL}_{2}$ \\
\hline
$\mathrm{SL}_{2} \times_{\upmu_{2}} \mathrm{SL}_{2}$ & $\mathrm{GL}_{2}$ \\
\hline
$\mathrm{SL}_{3}$ & $\mathrm{GL}_{2}$, $\mathrm{SL}_{3}$ \\
\hline
${^\vee}\mathrm{G}_{2}$ & $\mathrm{SL}_{2} \times_{\upmu_{2}} \mathrm{SL}_{2}$,
${^\vee}\mathrm{G}_{2}$ \\
\hline
\end{tabular}
\end{center}
In the row for $\mathrm{SL}_{2} \times_{\upmu_{2}} \mathrm{SL}_{2}$,
the group $\mathrm{GL}_{2}$ is isomorphic to
$\mathrm{SL}_{2} \times_{\upmu_{2}} {^\vee}T$. In the row for $\mathrm{SL}_{3}$,
the group $\mathrm{GL}_{2}$ is identified with a block diagonal
subgroup of $\mathrm{SL}_{3}$.  In all cases, ${^\vee}T \subset
{^\vee}K(\lambda)$. 
\end{prop}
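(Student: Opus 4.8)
The plan is to reduce the statement, for each of the five groups ${}^\vee\mathrm{G}_2(\lambda)$ furnished by Proposition~\ref{glambda}, to the enumeration of a handful of conjugacy classes, in the same spirit as the proof of Lemma~\ref{Iorbits}. The first observation is that, since ${}^\vee\mathrm{G}_2(\lambda) = \Cent_{{}^\vee\mathrm{G}_2}(e(\lambda))$, the element $e(\lambda)$ is central in ${}^\vee\mathrm{G}_2(\lambda)$. Hence for every $y \in \mathcal{I}(\lambda)$ the inner automorphism $\mathrm{int}(y)$ is an algebraic involution of ${}^\vee\mathrm{G}_2(\lambda)$, and ${}^\vee K(\lambda) = {}^\vee\mathrm{G}_2(\lambda)^{y}$ is its fixed-point subgroup --- in particular a symmetric subgroup, as it must be. Moreover every $y \in \mathcal{I}(\lambda)$ is semisimple: in characteristic zero an element whose square is semisimple is itself semisimple, by uniqueness of the Jordan decomposition.

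Since each ${}^\vee\mathrm{G}_2(\lambda)$ is connected with all maximal tori conjugate, every $y \in \mathcal{I}(\lambda)$ is ${}^\vee\mathrm{G}_2(\lambda)$-conjugate into ${}^\vee T$. Thus the ${}^\vee\mathrm{G}_2(\lambda)$-orbits on $\mathcal{I}(\lambda)$ correspond to the orbits of $W(\lambda) := W({}^\vee\mathrm{G}_2(\lambda),{}^\vee T)$ on the set $\{y \in {}^\vee T : y^{2} = e(\lambda)\}$, which is a torsor under ${}^\vee T[2] \cong (\mathbb{Z}/2\mathbb{Z})^{2}$ and so has exactly four elements. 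For $y$ in ${}^\vee T$ the fixed-point group is computed by Steinberg's theorem (\cite[Theorem 2.2, Theorem 2.11]{HumphCC}) as $${}^\vee K(\lambda) = \langle {}^\vee T,\ U_{\pm{}^\vee\beta} : {}^\vee\beta \in R({}^\vee\mathrm{G}_2(\lambda),{}^\vee T),\ {}^\vee\beta(y)=1 \rangle,$$ which in particular contains ${}^\vee T$ in every case. When ${}^\vee\mathrm{G}_2(\lambda) = {}^\vee T$ there are no roots, so ${}^\vee K(\lambda) = {}^\vee T$ for each of the four $y$. When ${}^\vee\mathrm{G}_2(\lambda) = {}^\vee\mathrm{G}_2$ one necessarily has $e(\lambda) = 1$ (otherwise its centralizer would not be all of ${}^\vee\mathrm{G}_2$), so this case is exactly Lemma~\ref{Iorbits} together with the computation of Section~\ref{groupK}, giving ${}^\vee K(\lambda) \in \{\mathrm{SL}_2\times_{\upmu_2}\mathrm{SL}_2,\ {}^\vee\mathrm{G}_2\}$. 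In each of the three intermediate cases $\mathrm{GL}_2$, $\mathrm{SL}_3$, $\mathrm{SL}_2\times_{\upmu_2}\mathrm{SL}_2$ the element $e(\lambda)$ is a non-identity central element --- again because its ${}^\vee\mathrm{G}_2$-centralizer is a proper subgroup --- which constrains which of the four $y$ can be central and what their orders may be; one then reads off the surviving root subgroups ${}^\vee\beta(y)=1$ and identifies $\langle {}^\vee T, U_{\pm{}^\vee\beta} : \ldots\rangle$ with a group on the list by a computation of intersections of commuting subgroups entirely parallel to the one at the end of Section~\ref{groupK}. The outcomes are: for $\mathrm{GL}_2$, three $W(\lambda)$-orbits of types ${}^\vee T$ and $\mathrm{GL}_2$; for $\mathrm{SL}_3$, types $\mathrm{SL}_3$ and a block-diagonal $\mathrm{S}(\mathrm{GL}_1\times\mathrm{GL}_2)\cong\mathrm{GL}_2$; and for $\mathrm{SL}_2\times_{\upmu_2}\mathrm{SL}_2$, only type $\mathrm{GL}_2 \cong \mathrm{SL}_2\times_{\upmu_2}{}^\vee T$.

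I expect the one genuinely non-routine point to be the group $\mathrm{SL}_2\times_{\upmu_2}\mathrm{SL}_2$, which is not simply connected (it is $\mathrm{SO}_4$), so Steinberg's connectedness theorem does not apply verbatim and one cannot immediately conclude that ${}^\vee\mathrm{G}_2(\lambda)^{y}$ is connected or has the asserted root subgroups. The way around this is to pass to the simply connected cover $\mathrm{SL}_2\times\mathrm{SL}_2$, lift $y$ to an element with central square, compute its (automatically connected) centralizer there, and verify that the image of that centralizer in $\mathrm{SL}_2\times_{\upmu_2}\mathrm{SL}_2$ exhausts ${}^\vee\mathrm{G}_2(\lambda)^{y}$; a short check then identifies it with $\mathrm{SL}_2\times_{\upmu_2}{}^\vee T \cong \mathrm{GL}_2$. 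Everything else is bookkeeping with the four square roots of $e(\lambda)$ and the one-, two- or six-element positive root systems, together with the observation that the table records the $\mathcal{I}(\lambda)$-classes only up to isomorphism, so distinct orbits producing isomorphic centralizers are listed once.
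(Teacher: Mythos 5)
Your proposal is correct and follows essentially the same route as the paper: reduce the ${}^\vee\mathrm{G}_2(\lambda)$-orbits on $\mathcal{I}(\lambda)$ to $W(\lambda)$-orbits on the four toral square roots of $e(\lambda)$ (the paper translates by $e(-\lambda/2)$ to land in the $2$-torsion, which is your torsor observation), and then compute centralizers case by case via \cite[Theorems 2.2 and 2.11]{HumphCC}. Your explicit flag that $\mathrm{SL}_2\times_{\upmu_2}\mathrm{SL}_2$ is not simply connected, so connectedness of the centralizer must be checked by hand via the cover, is a legitimate refinement of a point the paper's sketch passes over silently.
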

\begin{proof}
The final row of the table was established in Lemma \ref{Iorbits}.  We
therefore assume ${^\vee}\mathrm{G}_{2}(\lambda) \ncong
\mathrm{G}_{2}$ and offer a sketch of the elementary computations
involved in the remaining cases.
Let $W(\lambda)$ be the Weyl group of
$({^\vee}\mathrm{G}_{2}(\lambda), {^\vee}T)$.  As in the proof of
Lemma \ref{Iorbits}, we have a natural bijection between 
the ${^\vee}\mathrm{G}_{2}(\lambda)$-orbits of $\mathcal{I}(\lambda)$
and the $W(\lambda)$-orbits of
\begin{equation}
\label{eset}
\{y \in {^\vee}T : y^{2} = e(\lambda)\}.
\end{equation}
The groups ${^\vee}K(\lambda)$ we are seeking are the centralizers of
representatives of the $W(\lambda)$-orbits.  As we are determining
${^\vee}K(\lambda)$ only up to isomorphism, we may 
ignore the $W(\lambda)$-action.
Setting $y' = y \,e(-\lambda/2)$, we may
identify (\ref{eset}) with
\begin{equation}
\label{1set}
\{y' \in {^\vee}T : (y')^{2} = 1\}.
\end{equation}
For each instance of
${^\vee}\mathrm{G}_{2}(\lambda)$ it is easy to show that the elements
that square to the identity may be taken to be diagonal with $\pm 1$
as diagonal entries.  This determines the set (\ref{1set}), and
therefore the set (\ref{eset}), on a case-by-case basis.  The
centralizers of the resulting elements are ascertained using
\cite[Theorem 2.2 and Theorem 
  2.11]{HumphCC}.  We omit the computations. 
% For SL_2 \times SL_2, \lambda/2=\rho/4.  Since e(\rho/4) is
% semisimple the root space of a positive root
% \beta=a\alpha_1+b\alpha_2 belongs to the centralizer if and only if
% a<\alpha_1^\vee, \rho/4>+b<alpha_2^\vee, \rho/4> is an integer if
% and only if a<\alpha_1^\vee, \rho/4>+b<alpha_2^\vee, \rho/4> is
% divisible by 4 if and only if a=1 and b=3
%
% For SL_3, \lambda= (3/2)\alpha_2+(2/3)\alpha_1. The centralizer of
% e(\lambda/2) contains only the root subgroups for
% \alpha_{1}+3\alpha_2, which gives us a GL_2 block.  The centralizer
% of e(\lambda/2+ (\alpha_{1} +3\alpha_{2})/2) contains the root
% subgroups of all long roots and so is equal to all of SL_3
\end{proof}

Our last objective is to describe the ${^\vee}K(\lambda)$-orbits of
the  flag variety
${^\vee}\mathrm{G}_{2}(\lambda)/ {^\vee}B(\lambda)$ for each row in
the table of Proposition \ref{Ilambdaorbits}.   We disregard
all cases in which ${^\vee}K(\lambda) =
{^\vee}\mathrm{G}_{2}(\lambda)$, as there is only a single orbit in
these cases.   In view of this and Section
\ref{groupK}, we need only consider rows 2-4 of the table.

 If ${^\vee}\mathrm{G}_{2}(\lambda) \cong \mathrm{GL}_{2}$ then
${^\vee}\mathrm{G}_{2}(\lambda)/ {^\vee}B(\lambda)$ is isomorphic to
  $\mathrm{GL}_{2}/B$ where $B \subset \mathrm{GL}_{2}$ is the
  upper-triangular subgroup. The flag variety $\mathrm{GL}_{2}/B$ is
  isomorphic to the complex projective line $\mathbb{P}^{1}$ via the
  map that sends $gB$ to the line spanned by $g \tiny \begin{bmatrix} 1
    \\ 0 \end{bmatrix} \normalsize$.  If
  ${^\vee}K(\lambda)$ is isomorphic to the diagonal subgroup $^{\vee}T
  \subset \mathrm{GL}_{2}$ then there are three orbits:  two orbits
  correspond to the points $0, \infty \in \mathbb{P}^{1}$ and the
  remaining orbit corresponds to $\mathbb{P}^{1} - \{ 0, \infty \}$.

 If ${^\vee}\mathrm{G}_{2}(\lambda) \cong \mathrm{SL}_{2}
  \times_{\upmu_{2}} \mathrm{SL}_{2}$, the flag variety is isomorphic to
  $$(\mathrm{GL}_{2} \times \mathrm{GL}_{2})/ (B \times B) \cong
  \mathrm{GL}_{2}/B \times  \mathrm{GL}_{2}/B$$ 
as central  
  subgroups of reductive groups are contained in Borel subgroups.
 The ${^\vee}K(\lambda)$-orbits
  may be recovered as in the previous paragraph.  There are three orbits
  for ${^\vee}K(\lambda) \cong  \mathrm{GL}_{2} \cong \mathrm{SL}_{2}
  \times_{\upmu_{2}} {^\vee}T$.  

If ${^\vee}\mathrm{G}_{2}(\lambda) \cong \mathrm{SL}_{3}$ then we
may argue as in the previous paragraph to take the flag variety equal
to $\mathrm{GL}_{3}/B$, where $B \subset \mathrm{GL}_{3}$ is the
upper-triangular subgroup.  We may replace the block diagonal subgroup
of $\mathrm{SL}_{3}$ which is isomorphic to $\mathrm{GL}_{2}$, with
the block diagonal subgroup of $\mathrm{GL}_{3}$ which is isomorphic
to $\mathrm{GL}_{2} \times \mathrm{GL}_{1}$.  With these superficial
changes, the ${^\vee}K(\lambda)$-orbits of the flag variety may be
identified with the
$\mathrm{GL}_{2} \times \mathrm{GL}_{1}$-orbits of
$\mathrm{GL}_{3}/B$.  The orbits in this example, together with their
Bruhat order, are a special case of \cite[Section 2.4]{Yamamoto} (see
\cite[Figure 1]{Yamamoto}).  Alternatively, we may argue using
\cite{Adams-Fokko}, as we did in Section \ref{Korbits}.  The Hasse
diagram for the Bruhat order is
\begin{equation}
\label{sl3hasse}
\xymatrix{
& & *+[F]{S_{5}} \ar@{-}[dr]_(.7){s_{2}}  \ar@{-}[dl]^(.7){s_{1}}& &\\
& *+[F]{S_{3}}  \ar@{-}[dl]^{s_{2}} \ar@{-}[dr]_{s_{2}} & &  *+[F]{S_{4}}
  \ar@{-}[dl]^{s_{1}} 
  \ar@{-}[dr]_{s_{1}} & \\
 *+[F]{S_{2}} & &  *+[F]{S_{0}} &  & *+[F]{S_{1}}}
\end{equation}
In this diagram for ${^\vee}\mathrm{G}_{2}(\lambda) \cong
\mathrm{SL}_{3}$ we have again labelled the orbits with $S_{j}$ 
and the simple reflections with $s_{j}$, as we did for the Hasse
diagram for ${^\vee}\mathrm{G}_{2}$.  This abuse of notation should
not cause any confusion as the orbits for the different groups will be
treated in different sections.

We conclude by making some general observations about the orbits
introduced in this section. The reason the groups
${^\vee}\mathrm{G}_{2}(\lambda)$ are of interest 
is that the 
${^\vee}K(\lambda)$-orbits on the flag variety of ${^\vee}\mathrm{G}_{2}(\lambda)$
are in bijection with the equivalence classes of L-parameters for
${^\vee}\mathrm{G}_{2}$ with infinitesimal character $\lambda$
\cite[Proposition 6.16   and Proposition 6.17]{ABV}.   The Langlands
component group of a particular L-parameter is the component group of
a centralizer in ${^\vee}K(\lambda)$ of the semisimple element
$\lambda \in {^\vee}\mathfrak{t}$ \cite[Corollary 5.9 (c)]{ABV}.  The case of
${^\vee}K(\lambda) \cong \mathrm{SL}_{2} \times_{\upmu_{2}}
\mathrm{SL}_{2}$ was treated in Section \ref{compgroupsec}.  In all
remaining cases, the derived subgroup of ${^\vee}K(\lambda)$ is simply
connected.  By a theorem of Steinberg, the centralizer of $\lambda$ is
connected and the Langlands component group is trivial.

\section{Complete geometric parameters and the moment map}
\label{sec:momap}

In this section we return to the general framework of (\ref{landscape})
and (\ref{landscape1}), establishing some more notation and fleshing
out some details concerning the moment map.  We conclude by computing
the characteristic cycle (\ref{cc}) of a particularly simple perverse
sheaf on $X$.

Define the set of complete geometric parameters 
\begin{equation}
\label{geoparam}
\Xi\left(X, {^\vee}K\right) = \{ (S, \mathcal{V}) : S \in {^\vee}K
\backslash X \}
\end{equation}
 to be the set of pairs $\xi = (S,\mathcal{V})$
with $S$ a ${^\vee}K$-orbit on $X$ and $\mathcal{V}$ an irreducible
${^\vee}K$-equivariant 
local system on $S$. We are identifying the sets
$$\left\{P(\xi):\xi\in \Xi\left(X, {^\vee}K\right)\right\},\quad
\left\{D(\xi):\xi\in \Xi\left(X, {^\vee}K\right)\right\}$$
through the Riemann-Hilbert correspondence.  These sets  
are bases of the Grothendieck group $\mathscr{K}(X,{^\vee}K)$.  

As we saw in Section \ref{Korbits}, there are ten
${}^{\vee}K$-orbits on $X$, 
which we continue to denote somewhat abusively by $S_{i}$, $0\leq i\leq 9$.  
According to the description of the Langlands component groups
given in Section \ref{compgroupsec} and \cite[Lemma 7.3 (e)]{ABV}, the
trivial local system 
$\underline{\C}_{S_i}$ is the unique irreducible local system supported in orbits
$S_{i}$, $0 \leq i \leq 8$. For orbit $S_{9}$ the Langlands component
group is isomorphic to $\mathbb{Z}/2\mathbb{Z} \times
\mathbb{Z}/2\mathbb{Z}$.  The trivial character of the Langlands
component group corresponds to the
constant sheaf $\underline{\C}_{S_9}$.  There are three remaining
irreducible non-constant
local systems that we denote by $\mathcal{L}_i,\, i=10,11,12$.  
Therefore, $\Xi(X,{}^{\vee}K)$ consists of 13
complete geometric parameters
\begin{equation}\label{eq:IrrObj1}
\Xi(X,{}^{\vee}K)\ =\ \{\xi_i=(S_i,\underline{\C}_{S_i}):0\leq
i\leq 9\}\cup\{\xi_i=(S_9,\mathcal{L}_{i}):i=10,11,12\} 
\end{equation}
and $\mathscr{K}(X,{}^{\vee}K)$ has a basis given by
$\{P(\xi_i):0\leq i\leq 12\}$. 

We now specialize the general framework (\ref{landscape}) to
the setting of Section \ref{section:FlagNonIntegral}. 
Let $\lambda \in {^\vee}\mathfrak{t}$ be a non-integral regular element. 
Denote the flag variety by
$X(\lambda)={}^{\vee}\mathrm{G}_{2}(\lambda)/\,{}^{\vee}B(\lambda)$ and
take the group ${}^{\vee}K(\lambda)$ to be one of those described in
Proposition~\ref{Ilambdaorbits}. 
Under these conditions the situation is slightly simpler. For
any of the pairs $({}^{\vee}\mathrm{G}_{2}(\lambda),
{}^{\vee}K(\lambda))$ described in Proposition~\ref{Ilambdaorbits},
and any  $^{\vee}K(\lambda)$-orbit $S$ on $X(\lambda)$ the only
irreducible local system is the trivial local system
$\underline{\mathbb{C}}_S$. Consequently, 
the Grothendieck group $\mathscr{K}(X(\lambda),{}^{\vee}K(\lambda))$
has a basis given by  
\begin{equation}\label{eq:IrrObj2}
\{P(\xi_i):0\leq i\leq n\},
\end{equation} 
where $n$ is the number of ${}^{\vee}K(\lambda)$-orbits on
$X(\lambda)$, and $\xi_i = (S_i, \underline{\mathbb{C}}_{S_i})$. 

Our goal is to compute the characteristic cycles (\ref{cc}) of the
basis elements (\ref{eq:IrrObj1}) and (\ref{eq:IrrObj2}).  Some
well-known properties of these characteristic cycles which narrow the
computations are given in the following lemma.
\begin{lem}[Lemma 19.14 \cite{ABV}]\label{lem:lemorbit}
Let $S,\, S'$ be a pair of ${^\vee}K(\lambda)$-orbits in
$X(\lambda)$. Fix an irreducible local system $\mathcal{V}$  
on $S$ and write $\xi=(S,\mathcal{V})$. Then
$\chi_{S}^{\mathrm{mic}}(P(\xi))$ is equal to $1$ (the
rank of the local system $\mathcal{V}$). Moreover, if
$\chi_{S'}^{\mathrm{mic}}(P(\xi))\neq 0$, then $$S'\subset
\overline{S}.$$  
\end{lem}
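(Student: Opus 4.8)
The plan is to obtain this as the specialization to $\mathrm{G}_{2}$ of \cite[Lemma~19.14]{ABV}; the only extra input particular to our situation is that every irreducible local system occurring in a complete geometric parameter here has rank $1$. Since the two assertions are proved by genuinely different mechanisms, I will sketch each separately.

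For the support statement (the second sentence), I would start from the inclusion $\mathrm{Ch}(P(\xi)) \subseteq T^{*}_{{^\vee}K(\lambda)}X(\lambda) = \bigcup_{S'} \overline{T^{*}_{S'}X(\lambda)}$ of \cite[(19.1), Proposition~19.12(b)]{ABV}, together with the fact that $P(\xi)$, being an intersection cohomology complex attached to $(S,\mathcal{V})$, is supported precisely on $\overline{S}$. The characteristic variety of a regular holonomic $\mathcal{D}$-module projects onto its support, while $\overline{T^{*}_{S'}X(\lambda)}$ projects onto $\overline{S'}$; hence if $\chi^{\mathrm{mic}}_{S'}(P(\xi))\neq 0$ then $\overline{T^{*}_{S'}X(\lambda)}\subseteq \mathrm{Ch}(P(\xi))$ forces $\overline{S'}\subseteq\overline{S}$, i.e.\ $S'\subseteq\overline{S}$.

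For the leading-coefficient statement, I would restrict everything to the open dense orbit $S\subseteq\overline{S}$. Forming characteristic cycles commutes with restriction along the open immersion $S\hookrightarrow X(\lambda)$; over $S$ the complex $P(\xi)$ restricts to the shifted local system $\mathcal{V}[\dim S]$, the Lagrangian $\overline{T^{*}_{S}X(\lambda)}$ restricts to the honest conormal bundle $T^{*}_{S}X(\lambda)$, and no other $\overline{T^{*}_{S'}X(\lambda)}$ meets $T^{*}X(\lambda)|_{S}$ (any such $S'\subseteq\overline{S}$ with $S'\neq S$ lies in the closed complement $\overline{S}\setminus S$, so $\overline{S'}$ is disjoint from $S$). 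Therefore $\chi^{\mathrm{mic}}_{S}(P(\xi))$ equals the multiplicity of the conormal variety in the characteristic cycle of a flat connection of rank $r=\mathrm{rank}\,\mathcal{V}$ on the smooth variety $S$, which is $r$. In our cases the Langlands component groups are all either trivial or a Klein four-group (Section~\ref{compgroupsec}), so $\mathcal{V}$ is one-dimensional and $r=1$.

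I expect the one point that needs care is this last identification: that the characteristic cycle of a rank-$r$ flat connection on a smooth locally closed subvariety is $r$ times its conormal variety, which must be combined with the standard but sign-sensitive normalization in \cite{ABV} under which $m_{C}$, and hence $\chi^{\mathrm{mic}}_{S}$, is the non-negative integer given by a length. The remaining ingredients---the shape of $T^{*}_{{^\vee}K(\lambda)}X(\lambda)$, functoriality of $CC$ under open restriction, and disjointness of the boundary orbits from $S$---are either already in place above or immediate.
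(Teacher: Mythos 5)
Your argument is correct, and it is essentially the proof of \cite[Lemma 19.14]{ABV} that the paper itself does not reproduce but simply cites: the inclusion of $\mathrm{Ch}(P(\xi))$ in the union of the $\overline{T^{*}_{S'}X(\lambda)}$ together with projection onto the support $\overline{S}$ gives the second assertion, locality of the characteristic cycle over the open stratum gives the leading multiplicity as the rank of $\mathcal{V}$, and rank one follows from the component groups being trivial or Klein four, exactly as in Sections \ref{compgroupsec} and \ref{sec:momap}. The one phrase to repair is ``restriction along the open immersion $S\hookrightarrow X(\lambda)$'': $S$ is only locally closed, so one should restrict to the open set $X(\lambda)\setminus(\overline{S}\setminus S)$, in which $S$ is closed and $P(\xi)$ becomes the pushforward of $\mathcal{V}[\dim S]$ --- which is precisely what your parenthetical computation (that no $\overline{T^{*}_{S'}X(\lambda)}$ with $S'\neq S$ meets the fibres over $S$) actually uses, so this is a wording fix rather than a gap.
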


For the remainder of this section we survey the moment map and use it
to help compute $CC(P(\xi_{9}))$  from (\ref{eq:IrrObj1}).
To simplify the 
notation we work in the context of an arbitrary connected complex
reductive group $G$ and Borel subgroup $B$.  We will take $X = G/B$
and the symmetric subgroup $K \subset G$ to be the fixed-point
subgroup of an involutive automorphism of $G$.  Once we have completed
our survey, we will specialize to $G = 
{^\vee}\mathrm{G}_{2}$, \emph{etc.}~as in Section \ref{Korbits}.  

At each point $x \in X$, 
the differential of the action of $\mathrm{G}$ on $X$, defines a map
$$
a_x:\mathfrak{g} \longrightarrow T_x X.
$$
Since $X$ is a homogeneous space, the map $a_{x}$ extends to a
surjective morphism 
%\label{eq:maptoTX}
$$a:\mathfrak{g}\times X \longrightarrow TX.$$
The dual of this map
$$
a^{\ast}:T^\ast X \longrightarrow \mathfrak{g}^{\ast}\times X, 
$$
is a closed immersion. %, and hence projective. 
Composing $a^{\ast}$ with the projection onto the first factor yields
the {\it{moment map}} 
$$
\mu:T^\ast X \longrightarrow \mathfrak{g}^{\ast}.
$$
We have an alternative description of the cotangent bundle $T^\ast X$,
and therefore have an alternative definition of the moment map.
Indeed, let $\mathcal{N}(\mathfrak{g}^\ast)$ 
be the nilpotent cone in $\mathfrak{g}^{\ast}$.
%consisting of all nilpotent elements. 
We may identify
$X= G/ B$ with the variety of Borel subalgebras of
$\mathfrak{b}' \subset\mathfrak{g}$ under the map $gB \mapsto
\mathrm{Ad}(g)\mathfrak{b}$. 
As shown in \cite[{Proposition 1.4.11 and Lemma
    3.2.2}]{Chriss-Ginzburg}, we have isomorphisms    
\begin{align}
\nonumber T^{\ast}X &\cong  \{(\mathfrak{b}',\nu):\mathfrak{b}'\in X,\, \nu\in
(\mathfrak{g}/\mathfrak{b}')^{\ast}\}\\ 
\label{cotiso}&\cong \{( \mathfrak{b}' , \nu )\in X\times
\mathcal{N}(\mathfrak{g}^\ast):\nu\in(\mathfrak{b}')^{\ast} \}. 
\end{align}
Under this identification the moment map 
$\mu$ is simply the projection 
$$( \mathfrak{b}' , \nu ) \quad \longmapsto \quad \nu,$$  
from $T^{\ast}X$ to $\mathcal{N}(\mathfrak{g}^\ast)$ \cite[{Lemma
    3.2.3}]{Chriss-Ginzburg}. 

Let $\mathfrak{k}$ be the Lie algebra of the symmetric subgroup $K$. 
%Then as the morphism in \eqref{eq:maptoTX} defined through the
%differential of the action of $G$ on $X$, the differential of the
%action of $K$ on $X$ induces a morphism $a_K:\mathfrak{k}\times X
%\longrightarrow TX$.  
The conormal bundle to the $K$-action on $X$ is defined as 
%\label{eq:conormalbundle}
$$T^{\ast}_{K}X\, =\, \left\{ (\mathfrak{b}',\lambda):\, \lambda \in
T_{\mathfrak{b}'}^{\ast}X,\ \lambda(a_{\mathfrak{b}'}(\mathfrak{k})) =
0\right\}.$$
For any $K$-orbit $S$ in $X$, the fibre of $T^{\ast}_{K}X$ at
$y = \mathfrak{b}' \in S$ is $T_{S,y}^{*}X$, the conormal bundle to $S$ at  
$y$, or equivalently,  the annihilator of $T_{y}S$ in
$T^{\ast}_{y}X$. 
It follows that
$$
T^{\ast}_{K}X\, =\, \bigcup_{S\in K\setminus X} T_{S}^{\ast}X,
$$
where $T_{S}^{*}X = \cup_{y \in S} T_{S,y}^{*}X$.
Under the identification (\ref{cotiso})
\begin{equation}\label{eq:TSX}
T_S^{\ast}X\, =\, \{
(\mathfrak{b}',\nu):\mathfrak{b}'\in S,\, \nu\in
(\mathfrak{g}/(\mathfrak{k}+\mathfrak{b}'))^{\ast} \}.
\end{equation}
The image of 
$T^{\ast}_{K}X$
under the moment map $\mu$ is $\mathcal{N}_{X,K}(\mathfrak{g}^\ast)$,
%and denote by $\mathcal{N}_{X,K}(\mathfrak{g}^\ast)$  
the cone of nilpotent elements in the intersection
$$
\left(G\cdot
(\mathfrak{g}/\mathfrak{b})^{\ast}\right)\cap(\mathfrak{g}/\mathfrak{k})^{\ast}. 
$$
By \cite{Kostant-Rallis} (see also \cite[Lemma 20.14]{ABV}), the group
$K$ has finitely many orbits on  
$\mathcal{N}_{X,K}(\mathfrak{g}^\ast)$, and for any $\nu\in
\mathcal{N}_{X,K}(\mathfrak{g}^\ast)$ 
$$\dim \left(K\cdot \nu\right)\, =\, \frac{1}{2}\dim \left(G\cdot
\nu\right).$$ 
Additionally, by \cite[{Lemma 20.16}]{ABV}, the image
$\mu\left(\overline{T^{\ast}_S X}\right)$ is the closure of a single
orbit of $K$ on  $\mathcal{N}_{X,K}(\mathfrak{g}^\ast)$.
When $X$ is the flag variety of ${}^\vee \mathrm{G}_{2}$, we will provide
a detailed description of the conormal bundles in $\mu^{-1}(\mathcal{O})$,  
for each nilpotent ${^\vee}K$-orbit $\mathcal{O}$ on
$\mathcal{N}_{X, {^\vee}K}(\mathfrak{g}_{2}^\ast)$ (Proposition
\ref{prop:momentmapimage}).  
Understanding the image of $\mu$ will play a central role in our
computations of characteristic cycles.

Another object that will play a key role in the computation of these
characteristic cycles is the associated variety $\mathrm{AV}(P) =
\mu(\mathrm{Ch}(P)))$  of an
irreducible perverse sheaf $P$ (\ref{eq:AVdef}). (We could have
defined $\mathrm{AV}(P)$ independently of $\mathrm{Ch}(P)$, as in 
\cite{Vogan-Avariety}. That the two definitions agree follows from
\cite[Theorem 1.9(c)]{BoBrIII}). 
The set $\mathrm{AV}(P)$ is a closed 
$K$-invariant subvariety of $\mathcal{N}_{X,K}(\mathfrak{g}^\ast)$.
Since there are finitely many $K$-orbits on
$\mathcal{N}_{X,K}(\mathfrak{g}^\ast)$, we may write  $\mathrm{AV}(P)$
as a finite union of closures of $K$-orbits of equal dimension
\begin{equation}\label{eq:AV(P)}
\mathrm{AV}(P)\, =\, \overline{\mathcal{O}}_1 \cup \cdots \cup
\overline{\mathcal{O}}_n. 
\end{equation}
By \cite[Theorem 8.4]{Vogan-Avariety}, 
there exists a nilpotent $G$-orbit $\mathcal{O}_{G} \subset
\mathcal{N}(\mathfrak{g}^\ast)$, such that 
for each nilpotent $K$-orbit $\mathcal{O}_i$ appearing in 
(\ref{eq:AV(P)}), we have 
$$G\cdot \mathcal{O}_i=\mathcal{O}_{G}.$$
Let us say a few words about the \( G \)-orbit \( \mathcal{O}_{G} \).
Write $\pi$ for the irreducible $(\mathfrak{g}, K)$-module
corresponding to  $P$
under the Riemann-Hilbert and Beilinson-Bernstein
correspondences (\ref{landscape}).   Denote by $\mathrm{Ann}(\pi)$ the
annihilator of  
$\pi$ in  the universal enveloping algebra $U(\mathfrak{g})$.
By definition, such an  ideal is called \emph{primitive}. 
Associated to $\mathrm{Ann}(\pi)$ is a subvariety
$\mathrm{AV}(\mathrm{Ann}(\pi))$ of 
$\mathcal{N}(\mathfrak{g}^\ast)$, commonly referred to as the
associated variety of $\mathrm{Ann}(\pi)$ \cite[(1.4)]{Vogan-Avariety}.  By
results of Joseph and 
Borho-Brylinski (see \cite[{Theorem 10.2.2}]{Collingwood-McGovern}),
the variety $\mathrm{AV}(\mathrm{Ann}(\pi))$ is the closure
$\overline{\mathcal{O}}_{G}$ of %a single nilpotent $G$-orbit  
$\mathcal{O}_{G}$ in $\mathcal{N}(\mathfrak{g}^\ast)$. Moreover, by the
work of Joseph and Barbasch-Vogan (see, for example, \cite[{Theorem
    10.3.2}]{Collingwood-McGovern}), the orbit $\mathcal{O}_{G}$ is
\emph{special} in the sense of Lusztig--that is, $\mathcal{O}_{G}$ lies in
the image of the Lusztig-Spaltenstein map \cite{Spaltenstein}.  
%We point out that every special nilpotent orbit arises in this way, as
%the associated variety of some primitive ideal
Simply put, the 
associated variety \( \mathrm{AV}(P) \) decomposes as a union of
closures of nilpotent \( K \)-orbits \( \mathcal{O}_{i} \) whose \( G
\)-saturations \( G \cdot \mathcal{O}_{i} \) are special nilpotent orbits.

We come back to the setting of $G = {^\vee}\mathrm{G}_{2}$ and  give
a partial description of the associated varieties of irreducible
perverse sheaves on $X$. The full 
description will be given 
in Proposition \ref{prop:Cells-AV} in the next section.
To this end, we review the work of {\DJ}okovi\'c \cite{Dokovic}, 
which classifies the nilpotent orbits for groups of type $\mathrm{G}_{2}$. 
Let \( {}^{\vee}K \) be as in Section~\ref{groupK}. 
There are six nilpotent \( {}^{\vee}K \)-orbits: one of dimension six,
denoted $\mathcal{O}_6$; two of dimension five, $\mathcal{O}_{5,1}$
and $\mathcal{O}_{5,2}$; one of dimension four, $\mathcal{O}_4$; one
of dimension three, 
$\mathcal{O}_3$; and finally, the zero-dimensional orbit $\mathcal{O}_0$. Among the two five-dimensional nilpotent orbits, the component group of the centralizer of any $N \in \mathcal{O}_{5,1}$ in ${}^{\vee}K$ is $S_3$, while the component group of the centralizer of any $N \in \mathcal{O}_{5,2}$ in ${}^{\vee}K$ is $\mathbb{Z}_2$.
 According to \cite[Theorem 2.1]{Dokovic}, the closure ordering
on these nilpotent ${}^{\vee}K$-orbits is represented by the following
Hasse diagrams 
\begin{equation}
\label{eq:nilpotent-orbits}
\xymatrix{
6& & *+[F]{\mathcal{O}_6} \ar@{-}[dr]\ar@{-}[dl]& & &
&*+[F]{\mathcal{O}_{6,{^\vee}\mathrm{G}_{2}}}\ar@{-}[d] &12 \\ 
5& *+[F]{\mathcal{O}_{5,1}}\ar@{--}[r]  \ar@{-}[dr] & \ar@{--}[r]&
*+[F]{\mathcal{O}_{5,2}} 
  \ar@{-}[dl] & & &*+[F]{\mathcal{O}_{5,{^\vee}\mathrm{G}_{2}}}\ar@{-}[d] & 10\\
4  & &  *+[F]{\mathcal{O}_4}\ar@{-}[d] &  & & &
*+[F]{\mathcal{O}_{4,{^\vee}\mathrm{G}_{2}}}\ar@{-}[d] &8  \\ 
3  & &  *+[F]{\mathcal{O}_3}\ar@{-}[d] &  & & &
*+[F]{\mathcal{O}_{3,{^\vee}\mathrm{G}_{2}}}\ar@{-}[d] &6\\ 
0  & &  *+[F]{\mathcal{O}_0} & & & &  *+[F]{\mathcal{O}_{0,{^\vee}\mathrm{G}_{2}}}
&0  }
\end{equation}
In the diagram on the right, we display the closure ordering among the
${}^{\vee}\mathrm{G}_{2}$-saturations of the nilpotent
${}^{\vee}K$-orbits. The ${}^{\vee}K$-orbits $\mathcal{O}_{5,1}$ and
$\mathcal{O}_{5,2}$ lie in the same ${}^{\vee}\mathrm{G}_{2}$-orbit,
as indicated by the horizontal dotted line in the diagram on the
left. The dimension of each orbit is shown alongside it. 

According to the table for type \( \mathrm{G}_{2} \) on
\cite[page 427]{Carter85}, only the 
orbits $\mathcal{O}_{6, {^\vee}\mathrm{G}_{2}}$, $\mathcal{O}_{5,
  {^\vee}\mathrm{G}_{2}}$ and $\mathcal{O}_{0, {^\vee}\mathrm{G}_{2}}$
are special. These are precisely the orbits 
whose closures correspond to the associated varieties of certain
primitive ideals. Consequently, only the orbits $\mathcal{O}_6$,
$\mathcal{O}_{5,1}$, $\mathcal{O}_{5,2}$, and $\mathcal{O}_0$ can
appear in the decomposition (\ref{eq:AV(P)}) of the associated variety
$\mathrm{AV}(P)$ of a irreducible perverse sheaf $P$. 

We now have sufficient information to determine the associated
varieties of the perverse sheaves $P(\xi_i)$, where $\xi_i = (S_i,
\underline{\mathbb{C}}_{S_i})$ for $i = 0, 1, 2$ and $9$ in (\ref{eq:IrrObj1}).  
By Proposition~\ref{closedorb}, the orbits $S_0$, $S_1$, and $S_2$ are
closed and minimal in the Bruhat order. Therefore  Lemma
\ref{lem:lemorbit} implies 
\begin{equation}
\label{CC012}
CC(P(\xi_i))=\overline{T_{S_{i}}^{\ast}X},
\end{equation}
and
$$
\mathrm{AV}(P(\xi_i))=\mu\left(\overline{T_{S_{i}}^{\ast}X}\right)
\quad i = 0,1,2.
$$
Fortunately, these images are computed by Samples. According to
\cite[{Proposition 4.1, tables 1 and 
    4}]{Samples}\footnote{In Samples' notation 
%the involution defining ${}^{\vee}K$ and the numbering of the closed
%orbits differ from the conventions used here. Specifically, in
%\cite{Sample}*{Section 4}, Samples fixes a Cartan involution $\theta$
%satisfying 
%\begin{align*}
%\theta(X_{{}^{\vee}\alpha_1})\, =\, -X_{{}^{\vee}\alpha_1}\quad\text{and}\quad
%\theta(X_{{}^{\vee}\alpha_2})\, =\, X_{{}^{\vee}\alpha_2}.
%\end{align*}
orbit our orbit $S_1$ corresponds a closed orbit labelled with the
number one. However, 
our orbits $S_0$ and $S_2$ correspond to closed orbits number two and
three, respectively. }, we have  
\begin{equation}\label{eq:momentmapfor012}
\mu\left(\overline{T_{S_{0}}^{\ast}X}\right)\, =\, \overline{\mathcal{O}}_{6},\quad
\mu\left(\overline{T_{S_{1}}^{\ast}X}\right)\, =\,
\overline{\mathcal{O}}_{5,1},\quad
\mu\left(\overline{T_{S_{2}}^{\ast}X}\right)\, =\,
\overline{\mathcal{O}}_{5,2}. 
\end{equation}

To treat the perverse sheaf \( P(\xi_9) \), we proceed differently.  
The perverse sheaf \( P(\xi_9) \) corresponds to the trivial representation  
\( \pi({^\vee}\xi_9) \) of the split real form \(
   {}^{\vee}\mathrm{G}_{2}(\mathbb{R}) \) (\ref{landscape1})
following the correspondences of \cite[Proposition 2.7]{ICIII}.    The
trivial representation belongs to a 
   family of representations commonly denoted by $A_{\mathfrak{q}}$
\cite[Section~2]{Vogan-Zuckerman}.  In fact, $\pi({}^{\vee}\xi_9) =
A_{{}^{\vee}\mathfrak{g}_2}.$ (see the paragraph 
following  \cite[Theorem~2.5]{Vogan-Zuckerman}).  The associated
variety of $A_{\mathfrak{q}}$ is known
\cite[Proposition~3.4]{Barbasch-Vogan-Trombi}, and yields 
\begin{equation}\label{eq:AVfor9}
\mathrm{AV}(P(\xi_9)) = \mathcal{O}_0.
\end{equation} 
\begin{comment}
For each \( \theta \)-stable parabolic subalgebra \( \mathfrak{q}
\subset {}^{\vee}\mathfrak{g}_2 \),   recall the construction of the
irreducible   
\( ({}^{\vee}\mathfrak{g}_2, {}^{\vee}K) \)-module \( A_\mathfrak{q} \)  
given in \cite[Section~2]{Vogan-Zuckerman}.  
As noted in the paragraph following \cite[Theorem~2.5]{Vogan-Zuckerman},  
the trivial representation satisfies
${}^{\vee}\pi(\xi_9) = A_{{}^{\vee}\mathfrak{g}_2}.$  
Combining this identification with
\cite[Proposition~3.4]{Barbasch-Vogan-Trombi},  we therefore conclude
that 
\begin{align}\label{eq:AVfor9}
\mathrm{AV}(P(\xi_9)) = \mathcal{O}_0.
\end{align}
\end{comment}
%The description of the associated varieties for the remaining
%irreducible perverse sheaves will be given in Proposition
%\ref{prop:Cells-AV}. 
We can go further with $P(S_{9})$.  Since
\[
\mu\left(\mathrm{Ch}(P(\xi_9))\right) = \mathrm{AV}(P(\xi_9)) = \mathcal{O}_0,
\]
it follows that for every ${}^{\vee}K$-orbit $S$ on $X$ with 
\( \chi_S^{\mathrm{mic}}(P(\xi_9)) \neq 0 \), we have
\begin{align}\label{eq:mu=0}
\mu(\overline{T_S^* X}) = \mathcal{O}_0.
\end{align}
However, if $S \neq S_{9}$ and we take the Borel subalgebra
$\mathfrak{b}'$ in $S$ then the vector space $T_{S,\mathfrak{b}'}^{*}X$ is 
non-zero, and so the projection given by $\mu$ is
non-zero on $T_{S}^{*}X$ \eqref{eq:TSX}.  In consequence
(\ref{eq:mu=0})  holds only when \( S =
S_9 \), and
\[
CC(P(\xi_9)) = \overline{T_{S_9}^{*}X}.
\]
In equation~\eqref{eq:CC9} below, we will present an alternative method to
obtain this equality.

\section{Weyl group actions and the  characteristic cycles map}
\label{cohcont}

Recall from the introduction that the characteristic cycle map
$$CC: \mathscr{K}(X, {^\vee}K) \rightarrow \mathscr{L}(X,
{^\vee}K)$$
is $W$-equivariant.  The $W$-action on the domain stems from the
coherent continuation representation, and the $W$-action on the
codomain is due to Hotta and Rossmann.  We examine each of these two
$W$-actions in turn and then use them to draw conclusions about
characteristic cycles of the perverse sheaves on orbits $S_{3}$,
$S_{4}$ and $S_{9}$ (\ref{eq:BruhatOrbits}).  

We conclude by
returning to the $W$-action on  $\mathscr{L}(X,{^\vee}K)$.  The
definition of this $W$-action 
involves  several unknown integers.   Using the
properties of the characteristic cycles map and the moment map,
we are able to give a table of values for the $W$-action with fewer
unknown integers. 

\subsection{The coherent continuation representation}

We begin by describing the coherent continuation representation in the
context of $\mathscr{K}(X, {^\vee}K)$ (\ref{eq:IrrObj1}).  We then introduce
two concepts related to the coherent continuation representation that are
particularly valuable to us:  the tau-invariant, and Harish-Chandra
cells.  The Harish-Chandra cells are valuable as they provide information
about the associated varieties of irreducible perverse sheaves.

Although the definition of the coherent continuation representation is
fascinating, we need not present it here.   
For our computations on ${}^{\vee}\mathrm{G}_{2}$
the only information about coherent continuation that we 
need, is an explicit 
depiction of the action of the simple reflections in $W$ 
on the irreducible perverse sheaves on $X$. 
Luckily, the Atlas of Lie Groups and Representations software has
command called  
$\mathtt{coherent{\_}irr}$, which, upon entering the parameter for an
irreducible  $({^\vee}\mathfrak{g}_{2},{^\vee}K)$-module
${}^{\vee}\pi$ and a simple root $\alpha$ as input, 
displays the  value of the coherent continuation representation of
$\alpha$ applied to ${^\vee}\pi$.  The value is a virtual
module, displayed  as a $\mathbb{Z}$-linear combination of irreducible 
$({^\vee}\mathfrak{g}_{2}, {^\vee}K)$-modules. We take the image of
this virtual module under the Beilinson-Bernstein and Riemann-Hilbert
correspondences ($P(\xi) \leftrightarrow \pi({^\vee}\xi)$ in
(\ref{landscape1})) to obtain a $\mathbb{Z}$-linear 
combination of irreducible perverse sheaves on $X$, \emph{i.e.}~an
element in $\mathscr{K}(X, {^\vee}K)$.    The same procedure applies
to ${^\vee}\mathrm{G}_{2}(\lambda)$ and $X(\lambda)$ as in Section
\ref{section:FlagNonIntegral}.

The most elaborate case arises for
${}^{\vee}\mathrm{G}_2$ and $X$.  
The action of the simple reflections on the set of irreducible
perverse sheaves  $P(\xi_i)$ as in (\ref{eq:IrrObj1})
is summarized in the  following  table. For simplicity, we write
$P_i$ instead of $P(\xi_i)$ and $s_{i}$ instead of $s_{{^\vee}\alpha_{i}}$

\begin{minipage}{\linewidth}
\bigskip
\centering
\captionof{table}{$\left({}^{\vee}\mathrm{G}_{2}(\lambda),{}^{\vee}K(\lambda)
\right)\,  
  =\,
  {}(\mathrm{G}_{2},\mathrm{SL}_2\times_{\upmu_2}\mathrm{SL}_2)$}
\label{eq:ccaction} 
\begin{tabular}{ | l | c | c | }
\hline 
 $i$ & $s_{1}\cdot P_i$ & $s_{2}\cdot P_i$
\\ \hline \hline
		
   0 & $P_0+P_4$ & $P_0+P_3$\\	\hline
   1 & $-P_1$ & $P_1+P_3$ \\ \hline
   2 & $P_2+P_4$ & $-P_2$ \\ \hline
   3 & $P_1+P_3+P_6$ & $-P_3$  \\ \hline
   4 & $-P_4$ & $P_2+P_4+P_5$\\ \hline
   5 & $P_4+P_5+P_8$ & $-P_5$\\ \hline
   6 & $-P_6$ & $P_3+P_6+P_7$\\ \hline
   7 & $P_6+P_7+P_9+P_{10}$ & $-P_7$\\ \hline
   8 & $-P_8$ & $P_5+P_8+P_9+P_{11}$\\ \hline
   9 & $-P_9$ & $-P_9$\\ \hline
   10 & $-P_{10}$ & $P_7+P_{10}$\\ \hline
   11 & $P_8+P_{11}$ & $-P_{11}$ \\ 
 \hline  
\end{tabular}

\bigskip 
  \end{minipage}
For any of the other pairs
$\left({}^{\vee}\mathrm{G}_{2}(\lambda),{}^{\vee}K(\lambda) \right)$
described in Proposition~\ref{Ilambdaorbits}, the situation is considerably
simpler. A description of the coherent continuation action in those
cases will be given in Section~\ref{section:G2computationsNon-integral}. 

We will make use of the notion of \emph{tau-invariant}, following
\cite[{Definition 7.3.8}]{greenbook}.  For an irreducible
$P\in \mathscr{K}(X, {^\vee}K)$, we say that the simple root
${^\vee}\alpha_{j}$ is in the (Borho-Jantzen-Duflo) tau-invariant
$\tau(P)$ of $P$ if 
\begin{equation}\label{eq:verticalP}
s_{j} \cdot P=-P, \quad j = 1,2.
\end{equation}
By Table \ref{eq:ccaction}, these tau-invariants are given as follows 
\begin{align}
\begin{aligned}\label{eq:tau}
{^\vee}\alpha_1\in \tau(P(\xi_i))&\quad\text{ if and only if }\quad
i\in\{1,4,6,8,9,10\},\\ 
{^\vee}\alpha_2\in \tau(P(\xi_i))&\quad\text{ if and only if }\quad
i\in\{2,3,5,7,9,11\}. 
\end{aligned}
\end{align}

Another notion related to the coherent continuation representation is
that of \emph{Harish-Chandra cells}.  The notion of Harish-Chandra
cells was originally formulated in the setting of
$({^\vee}\mathfrak{g}, {^\vee}K)$-modules.  However, we identify these
modules with perverse sheaves as we did earlier in the discussion on
coherent continuation.
Harish-Chandra cells  partition the
set of irreducible perverse sheaves as follows. Suppose $P, P' \in
\mathscr{K}(X, 
{^\vee}K)$ are irreducible.  Following the description
in \cite[{Definition~2.5}]{Barbasch-Vogan-Trombi}, as enabled by  
\cite[{Proposition~2.6}]{Barbasch-Vogan-Trombi}, we write
$$P\underset{LR}{<}P'$$
if there exists $w \in W$ such that $P$ appears as a summand
in coherent continuation representation of $w$ applied to $P'$ (a
$\mathbb{Z}$-linear combination of irreducible perverse sheaves).
We write $P\sim P'$ if both $P\underset{LR}{<}P'$ and
$P'\underset{LR}{<}P$ hold. It is  
straightforward to verify that $\sim$ is an equivalence
relation. 
The equivalence classes in this partition are called Harish-Chandra cells.
Of particular significance to us is their connection to the associated
varieties (\ref{eq:AVdef}).

Using Table~\ref{eq:ccaction}, it is straightforward to verify the
following description of the Harish-Chandra cells for the irreducible
perverse sheaves on $X = {^\vee}\mathrm{G}_{2}/ \, {^\vee}B$.
\begin{prop}\label{prop:HCcells}
The Harish-Chandra cells of $\mathscr{K}(X, {^\vee}K)$ are 
\begin{align*}
\mathrm{cell}_0\ =& \ [P_{0}]\\
\mathrm{cell}_1\ =& \ [P_1,P_3,P_6,P_7,P_{10}] \\
\mathrm{cell}_2\ =& \ [P_2,P_4,P_5,P_8,P_{11}] \\
\mathrm{cell}_3\ =& \ [P_{9}]
\end{align*}
\end{prop}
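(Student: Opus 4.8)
The plan is to extract the preorder $\underset{LR}{<}$ on irreducible perverse sheaves directly from Table~\ref{eq:ccaction} and then take its symmetrization. Since $W=\langle s_1,s_2\rangle$, the coherent continuation action of any $w\in W$ on an irreducible perverse sheaf is an iterate of the two actions tabulated there, so $P\underset{LR}{<}P'$ is the transitive closure of the \emph{elementary} relation ``$P$ has nonzero coefficient in $s_1\cdot P'$ or in $s_2\cdot P'$'' (I return to this reduction below). I would therefore form a directed graph on the twelve perverse sheaves $P_0,\dots,P_{11}$ of Table~\ref{eq:ccaction}, placing an arrow $P'\to P$ whenever $P\neq P'$ occurs in $s_1\cdot P'$ or in $s_2\cdot P'$. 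Then $P\sim P'$ holds exactly when $P$ and $P'$ lie on a common directed cycle, so the Harish-Chandra cells are precisely the strongly connected components of this graph, and it remains to identify them.

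Reading Table~\ref{eq:ccaction} row by row, the arrows are $P_0\to P_3,P_4$; $P_1\to P_3$; $P_2\to P_4$; $P_3\to P_1,P_6$; $P_4\to P_2,P_5$; $P_5\to P_4,P_8$; $P_6\to P_3,P_7$; $P_7\to P_6,P_9,P_{10}$; $P_8\to P_5,P_9,P_{11}$; $P_{10}\to P_7$; $P_{11}\to P_8$, together with the fact that $P_9$ emits no arrow, since $s_1\cdot P_9=s_2\cdot P_9=-P_9$. From this list: $P_0$ receives no arrow, so $\{P_0\}$ is a singleton (source) component; $P_9$ emits no arrow, so $\{P_9\}$ is a singleton (sink) component; the set $\{P_1,P_3,P_6,P_7,P_{10}\}$ is strongly connected because the graph restricts on it to $P_1\leftrightarrow P_3\leftrightarrow P_6\leftrightarrow P_7\leftrightarrow P_{10}$, each $\leftrightarrow$ denoting a pair of opposite arrows, and likewise $\{P_2,P_4,P_5,P_8,P_{11}\}$ via $P_2\leftrightarrow P_4\leftrightarrow P_5\leftrightarrow P_8\leftrightarrow P_{11}$. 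Finally, scanning the same list shows there is no arrow between these two five-element sets: every arrow leaving either set goes to $P_9$, and every arrow entering either set comes from $P_0$. Hence neither set can be enlarged nor merged with $\{P_0\}$ or $\{P_9\}$, and the strongly connected components are exactly $[P_0]$, $[P_1,P_3,P_6,P_7,P_{10}]$, $[P_2,P_4,P_5,P_8,P_{11}]$, $[P_9]$, which is the content of Proposition~\ref{prop:HCcells}.

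The only step that is not purely combinatorial is the reduction in the first paragraph: for a long word $w$, a summand $P$ of $w\cdot P'$ could conceivably arise only after cancellations in the Grothendieck group, with no chain of elementary relations from $P'$ to $P$. That this does not happen --- equivalently, that $\underset{LR}{<}$ is generated by the simple reflections and is insensitive to such cancellation --- is exactly \cite[Proposition~2.6]{Barbasch-Vogan-Trombi}, which I would invoke rather than reprove; this is where I expect the genuine subtlety to lie. As a cross-check on the resulting partition, one can verify that it is compatible with the tau-invariants \eqref{eq:tau} and with the associated-variety computations of Section~\ref{sec:momap}: the nilpotent ${^\vee}\mathrm{G}_{2}$-orbit attached to a cell is constant along it, namely $\mathcal{O}_{6,{^\vee}\mathrm{G}_{2}}$ on $[P_0]$ via $P_0$, $\mathcal{O}_{5,{^\vee}\mathrm{G}_{2}}$ on both $[P_1,P_3,P_6,P_7,P_{10}]$ and $[P_2,P_4,P_5,P_8,P_{11}]$ via $P_1$ and $P_2$ (consistent, since $\mathcal{O}_{5,1}$ and $\mathcal{O}_{5,2}$ share a ${^\vee}\mathrm{G}_{2}$-orbit), and $\mathcal{O}_{0,{^\vee}\mathrm{G}_{2}}$ on $[P_9]$ via $P_9$.
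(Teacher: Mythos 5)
Your proposal is correct and is essentially the paper's own argument: the paper simply asserts that the cells are "straightforward to verify" from Table~\ref{eq:ccaction}, and your graph-theoretic reading of that table (arrows from the simple-reflection action, cells as strongly connected components) is exactly that verification carried out explicitly, with the reduction to simple reflections handled by the same citation to Barbasch--Vogan that the paper uses in its definition of $\underset{LR}{<}$. Your cross-check against the tau-invariants and associated varieties is consistent with, but not needed for, the paper's treatment.
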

The relationship between Harish-Chandra cells and associated varieties
is described in the following theorem due to Vogan. 
\begin{prop}\label{prop:Cell-AV0}
Let $P$ and $P'$ be two irreducible perverse sheaves
that belong to the same Harish-Chandra cell. Then their corresponding
associated varieties \eqref{eq:AVdef} coincide
$$\mathrm{AV}(P) = \mathrm{AV}(P').$$
\end{prop}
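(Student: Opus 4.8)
The plan is to deduce the Proposition from the following monotonicity statement: if $P \underset{LR}{<} P'$, then $\mathrm{AV}(P) \subseteq \mathrm{AV}(P')$. Granting this, the Proposition is immediate: two irreducible perverse sheaves in the same Harish-Chandra cell satisfy both $P \underset{LR}{<} P'$ and $P' \underset{LR}{<} P$, so the two containments $\mathrm{AV}(P) \subseteq \mathrm{AV}(P')$ and $\mathrm{AV}(P') \subseteq \mathrm{AV}(P)$ force equality. (This monotonicity is the theorem of Vogan referred to just above the statement; the paragraphs below sketch how I would establish it.)

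First I would transport everything from perverse sheaves to $({}^{\vee}\mathfrak{g}_2, {}^{\vee}K)$-modules. Under the correspondences of \eqref{landscape1} one has $\mathrm{Ch}(P(\xi)) = \mathrm{Ch}(\pi({}^{\vee}\xi))$, hence $\mathrm{AV}(P(\xi)) = \mu(\mathrm{Ch}(\pi({}^{\vee}\xi)))$, and by \cite[Theorem 1.9(c)]{BoBrIII} this coincides with the module-theoretic associated variety of $\pi({}^{\vee}\xi)$, namely the support in $\mathcal{N}_{X,{}^{\vee}K}(\mathfrak{g}_2^{\ast})$ of $\mathrm{gr}\,\pi({}^{\vee}\xi)$ for a good filtration. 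Since the coherent continuation $W$-action is intertwined by the same correspondences (this is how Table~\ref{eq:ccaction} and the cell decomposition of Proposition~\ref{prop:HCcells} were obtained in the first place), it suffices to argue entirely on the representation side.

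The crux is the monotonicity of the associated variety under the wall-crossing functors that realize coherent continuation. For a simple reflection $s$ one has, in the Grothendieck group, $s\cdot[\pi] = [U_s\pi] - [\pi]$, where $U_s$ is the composite of translation onto and off the $s$-wall; each translation functor is tensoring with a finite-dimensional representation $F$ followed by projection onto an infinitesimal-character block. A good filtration on $\pi$ induces a good filtration on $F\otimes\pi$ with associated graded $F\otimes\mathrm{gr}\,\pi$ (with $F$ placed in degree zero) as a module over the graded ring, so tensoring with a finite-dimensional representation leaves the support unchanged, while restriction to a block summand and passage to composition factors can only shrink it. Hence every composition factor of $U_s\pi$ — and therefore every irreducible occurring with nonzero coefficient in $s\cdot[\pi]$, since such an irreducible is either $\pi$ itself or a composition factor of $U_s\pi$ — has associated variety contained in $\mathrm{AV}(\pi)$; this is essentially \cite{Vogan-Avariety}. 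Writing $w = s_{i_1}\cdots s_{i_k}$ and expanding $w\cdot[P'] = s_{i_1}\cdot(s_{i_2}\cdots s_{i_k}\cdot[P'])$, an induction on $k$ then shows that every irreducible occurring in $w\cdot[P']$ has associated variety contained in $\mathrm{AV}(P')$. Specializing to the irreducible $P$ witnessing $P \underset{LR}{<} P'$ gives the desired containment.

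I expect the single point requiring genuine care to be this monotonicity of $\mathrm{AV}$ under translation functors — that $\mathrm{gr}(F\otimes\pi)$ has the same support as $\mathrm{gr}\,\pi$, and that restriction to a block and passage to subquotients do not enlarge it; the rest is bookkeeping with reduced words. One cannot shortcut the argument by invoking only Joseph's invariance of the associated variety of the annihilator on two-sided cells: for $\mathrm{G}_2$ this merely pins down the $\,{}^{\vee}\mathrm{G}_2$-saturation $\mathcal{O}_G$, and since the special orbit $\mathcal{O}_{5,{}^{\vee}\mathrm{G}_{2}}$ is the $\,{}^{\vee}\mathrm{G}_2$-saturation of \emph{both} distinct ${}^{\vee}K$-orbits $\mathcal{O}_{5,1}$ and $\mathcal{O}_{5,2}$, knowing $\mathcal{O}_G$ does not determine $\mathrm{AV}(P)$ — so the finer monotonicity argument above is genuinely needed.
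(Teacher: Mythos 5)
Your proposal is correct, and it is essentially the intended argument: the paper states this result without proof, attributing it to Vogan, and the standard proof behind that attribution is exactly the monotonicity $P\underset{LR}{<}P' \Rightarrow \mathrm{AV}(P)\subseteq\mathrm{AV}(P')$ established via the behaviour of good filtrations under tensoring with finite-dimensional representations, passage to block summands and subquotients, and induction on a reduced word (see \cite{Vogan-Avariety}). Your closing remark is also well taken: the invariance of the annihilator's associated variety on cells only determines the ${}^{\vee}\mathrm{G}_{2}$-saturation, which does not distinguish $\mathcal{O}_{5,1}$ from $\mathcal{O}_{5,2}$, so the finer ${}^{\vee}K$-orbit-level statement really does require the translation-functor argument you give.
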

The combination of Propositions \ref{prop:HCcells} and \ref{prop:Cell-AV0}
leads us to
\begin{prop}\label{prop:Cells-AV}
The associated varieties of the irreducible perverse sheaves in
$\mathscr{K}(X, {^\vee}K)$ are given as follows:
\begin{align*}
\mathrm{AV}(P(\xi_0))&=\overline{\mathcal{O}}_{6},\\
 \mathrm{AV}(P)&=\overline{\mathcal{O}}_{5,1}, \quad P\in\mathrm{cell}_1,  \\ 
\mathrm{AV}(P)&=\overline{\mathcal{O}}_{5,2},
 \quad P\in\mathrm{cell}_2,\\
\mathrm{AV}(P(\xi_9))&=\overline{\mathcal{O}}_{0}. 
\end{align*}
\end{prop}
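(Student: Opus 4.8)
The plan is to reduce the entire statement to the four perverse sheaves $P(\xi_0)$, $P(\xi_1)$, $P(\xi_2)$ and $P(\xi_9)$, whose associated varieties have already been determined earlier in this section, and then to propagate each answer across the Harish--Chandra cell containing the relevant sheaf by invoking Proposition~\ref{prop:Cell-AV0}. The first step is the bookkeeping observation that, by Proposition~\ref{prop:HCcells}, each of the four cells contains exactly one of these distinguished sheaves: $P_0\in\mathrm{cell}_0$, $P_1\in\mathrm{cell}_1$, $P_2\in\mathrm{cell}_2$, and $P_9\in\mathrm{cell}_3$.

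Next I would compute the associated variety of each of these four representatives directly. For $i=0,1,2$ the orbit $S_i$ is closed and minimal in the Bruhat order (Lemma~\ref{closedorb}), so by Lemma~\ref{lem:lemorbit} the characteristic cycle reduces to a single term, $CC(P(\xi_i))=\overline{T^{\ast}_{S_i}X}$, as recorded in (\ref{CC012}). Hence $\mathrm{Ch}(P(\xi_i))=\overline{T^{\ast}_{S_i}X}$, and by the definition (\ref{eq:AVdef}) we get $\mathrm{AV}(P(\xi_i))=\mu(\overline{T^{\ast}_{S_i}X})$. The moment-map images on the right are precisely those computed by Samples in (\ref{eq:momentmapfor012}), giving $\mathrm{AV}(P(\xi_0))=\overline{\mathcal{O}}_6$, $\mathrm{AV}(P(\xi_1))=\overline{\mathcal{O}}_{5,1}$ and $\mathrm{AV}(P(\xi_2))=\overline{\mathcal{O}}_{5,2}$. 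For the remaining representative, (\ref{eq:AVfor9}) already supplies $\mathrm{AV}(P(\xi_9))=\mathcal{O}_0=\overline{\mathcal{O}}_0$.

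With these four values in hand, I would conclude by applying Proposition~\ref{prop:Cell-AV0}: the associated variety is constant on each Harish--Chandra cell. Since $\mathrm{cell}_0=[P_0]$ and $\mathrm{cell}_3=[P_9]$ are singletons, nothing further is needed there; for $\mathrm{cell}_1$ and $\mathrm{cell}_2$ the values just found for the representatives $P_1$ and $P_2$ therefore hold for all five members of each cell, which is exactly the asserted list. As a consistency check, all four associated varieties occurring lie among $\overline{\mathcal{O}}_6$, $\overline{\mathcal{O}}_{5,1}$, $\overline{\mathcal{O}}_{5,2}$, $\overline{\mathcal{O}}_0$, in agreement with the earlier observation that only these closures can appear in a decomposition of the form (\ref{eq:AV(P)}).

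Essentially all of the genuine work has already been done: in the Samples computation (\ref{eq:momentmapfor012}) for the closed orbits, in the identification of $\pi({}^\vee\xi_9)$ with the trivial module $A_{{}^\vee\mathfrak{g}_2}$ together with the Barbasch-Vogan-Trombi formula yielding (\ref{eq:AVfor9}) (reinforced by the moment-map argument showing $CC(P(\xi_9))=\overline{T^{\ast}_{S_9}X}$), and in Propositions~\ref{prop:HCcells} and \ref{prop:Cell-AV0}. So there is no real obstacle in the present proposition; the only point to be careful about when writing out the details is to confirm that the four cells of Proposition~\ref{prop:HCcells} genuinely exhaust $\Xi(X,{}^\vee K)$, so that every irreducible perverse sheaf is covered by one of the four cases above.
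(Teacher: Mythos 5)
Your proposal is correct and is essentially the paper's own proof: the paper likewise quotes \eqref{eq:momentmapfor012} and \eqref{eq:AVfor9} for $P(\xi_0),P(\xi_1),P(\xi_2),P(\xi_9)$ and then invokes Proposition~\ref{prop:Cell-AV0} to spread these values over the cells of Proposition~\ref{prop:HCcells}. Your closing worry about the cells exhausting $\Xi(X,{}^\vee K)$ is immaterial here, since the proposition only asserts values for $P(\xi_0)$, $P(\xi_9)$ and the members of $\mathrm{cell}_1$ and $\mathrm{cell}_2$ (the remaining parameter $\xi_{12}$, attached to the compact form, is not covered by the statement).
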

\begin{proof}
As established in \eqref{eq:momentmapfor012} and \eqref{eq:AVfor9}, we have
\[
\mathrm{AV}(P(\xi_0)) = \overline{\mathcal{O}}_{6}, \quad
\mathrm{AV}(P(\xi_1)) = \overline{\mathcal{O}}_{5,1}, \quad
\mathrm{AV}(P(\xi_2)) = \overline{\mathcal{O}}_{5,2}, \quad 
\mathrm{AV}(P(\xi_9)) = \overline{\mathcal{O}}_{0}.
\]
The desired result now follows from Proposition~\ref{prop:Cell-AV0},
\end{proof}

\subsection{The $W$-action on $\mathscr{L}(X, {^\vee}K)$}
\label{WactionL}

We have reviewed the $W$-action on $\mathscr{K}(X, {^\vee}K)$, the
domain of the characteristic cycle map (\ref{eq:CC-map}). We now turn
to the study of the $W$-action on its codomain, namely the   $\mathbb{Z}$-module
$\mathscr{L}(X,{^\vee}K)$ (\ref{LXK}).  As we shall see, there is a
compatibility between the $W$-action and the moment map $\mu$ which
allows one to construct subquotients of $\mathscr{L}(X,{^\vee}K)$.
These subquotients can be characterized using the Springer
correspondence and the work of Rossmann.  This characterization will be
exploited in the proof of Lemma \ref{lem=chiS5} below.  

There is a natural isomorphism
between the formal $\mathbb{Z}$-module $\mathscr{L}(X,{^\vee}K)$ and
the top Borel-Moore homology  $H_{\mathrm{top}}^{\mathrm{BM}} (T_{{^\vee}K}^{\ast}X,
\mathbb{Z})$ \cite[Section 2.2]{Maxim}.
By \cite[Lemma 19.1.1]{fulton}, the set
$\left\{[\overline{T_S^{\ast}X}]:\, S\in {^\vee}K\setminus X\right\}$  
of fundamental classes of conormal bundle closures form a basis for
the top Borel-Moore homology  $H_{\mathrm{top}}^{\mathrm{BM}}
( T_{{^\vee}K}^{\ast} X,\mathbb{Z})$, 
$$H_{\mathrm{top}}^{\mathrm{BM}} (T_{{^\vee}K}^{\ast}X, \mathbb{Z})\, =\,
\bigoplus_{S\in {^\vee}K\setminus X} \mathbb{Z}\,
[\overline{T_S^{\ast}X}]. 
$$
We identify $\overline{T_S^{\ast}X}$ with $[\overline{T_S^{\ast}X}]$ and write
$\mathscr{L}(X,{^\vee}K) = H_{\mathrm{top}}^{\mathrm{BM}}
(T_{{^\vee}K}^{\ast}X, \mathbb{Z})$. 

In \cite{Hotta85}, \cite{HottaR} and \cite{Rossmann95}, the
topological construction of the Springer representation 
% due to Kazhdan-Lusztig 
is expanded in order to define an action of the Weyl group
on the Borel-Moore homology space. 
%can be adapted to prove that the top Borel-Moore homology Htop (T ∗K
%B, Z) is a representation of W.  
According to \cite[{Lemma 1 \S3 and Theorem \S4}]{Hotta85}),
this action is graded in the following sense.  If $w \in W$ and 
$\mu\left(\overline{T_S^{\ast}X}\right)=\overline{\mathcal{O}}$,
then $w\cdot \overline{T_S^{\ast}X}$
is a linear combination of conormal bundles 
$\overline{T_{S'}^{\ast}X}$, such that
$$
\mu\left(\overline{T_{S'}^{\ast}X}\right)\subset\overline{\mathcal{O}}.
$$
Let us present the action
of the simple reflections on $H_{\mathrm{top}}^{\mathrm{BM}}
(T_K^{\ast}X, \mathbb{Z})$. As in the previous section it simplifies the 
presentation to work in the context of an arbitrary connected complex
reductive group $G$ and the associated objects $X = G/B$, $K$ \emph{etc}.
Let $\alpha$ be a simple root relative to $B$, $s_{\alpha} \in W$ be its simple
reflection, and consider the natural projection 
\begin{equation}
\label{pimap}
\pi_{\alpha}: G/ B\,
\longrightarrow\, G/(B\sqcup B s_\alpha B).
\end{equation}
Following \cite[Section 2]{Hotta85} and \cite[{\S3, Lemma
    2}]{Tanisaki}, we say that a  
$K$-orbit $S$ is \emph{$s_\alpha$-vertical} if  for every $ x \in S$
the intersection 
$$
\pi_\alpha^{-1}(\pi_\alpha(x)) \cap S,
$$
is open and dense in $\pi_\alpha^{-1}(\pi_\alpha(x))$. If this
condition is not satisfied, we say 
that $S$ is \emph{$s_\alpha$-horizontal}. 
The following theorem gives a partial description of the action. 
\begin{theo}[{\cite[Sections 3 and 4]{Hotta85}}, {\cite[Proposition
        8]{Tanisaki}}] 
\label{theo:Waction} Let $S$ be a $K$-orbit in $X$. Fix a simple root
$\alpha \in R(B,T)$. 
\begin{enumerate}
\item 
If $S$ is $s_\alpha$-{\it{vertical}}, then
\begin{align}\label{eq:vertical}
s_\alpha\cdot \overline{T_S^{\ast}X}\, =\, -{\overline{T_S^{\ast}X}}. 
\end{align}
\item\label{theo:Waction2}  If $S$ is $s_\alpha$-{\it{horizontal}}, then
for any $s_\alpha$-vertical orbit
$S'\subset\overline{\pi_{\alpha}^{-1}(\pi_{\alpha}(S))}$  
with
$$
\mu\left(\overline{T^{\ast}_{S'}X}\right)\, \subset\, 
\mu\left(\overline{T^{\ast}_{S}X}\right).
$$
there is a non-negative integer $n_{S, S'}$, such that
\begin{align}\label{eq:horizontal}
s_{\alpha} \cdot \overline{T^{*}_{S}X}\, =\, \overline{T^{*}_{S}X}\, +\, 
\sum_{S'}n_{S,S'} \, \overline{T^{*}_{S'}X}.
\end{align}
\end{enumerate}
\end{theo}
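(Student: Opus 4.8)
The plan is to deduce both formulas from the explicit Hotta--Rossmann description of the $W$-action by push--pull along the minimal parabolic fibration, so that only a rank-one (i.e.\ $\mathbb{P}^1$-fibre) computation is left over. Recall from \eqref{pimap} the $\mathbb{P}^1$-bundle $\pi_\alpha\colon X=G/B\to X_\alpha:=G/(B\sqcup Bs_\alpha B)$. In the normalisation of \cite[\S3]{Hotta85}, the operator on $\mathscr L(X,K)=H^{\mathrm{BM}}_{\mathrm{top}}(T^{\ast}_KX,\mathbb Z)$ representing $s_\alpha$ has the shape $T_\alpha=\Phi_\alpha-\mathrm{id}$, where $\Phi_\alpha$ is the correspondence operator attached to $\pi_\alpha$ (proper pushforward to the $T^{\ast}X_\alpha$-side, followed by the refined Gysin pullback of $\pi_\alpha$). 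The first step is purely combinatorial: for a $K$-orbit $S$, the image $\pi_\alpha(S)$ is a single $K$-orbit in $X_\alpha$, and $\pi_\alpha^{-1}(\pi_\alpha(S))$ is a $K$-stable union of orbits whose unique dense member is $S$ exactly when $S$ is $s_\alpha$-vertical; in that case $\overline S=\overline{\pi_\alpha^{-1}(\pi_\alpha(S))}$, whereas in the $s_\alpha$-horizontal case $\pi_\alpha|_S\colon S\to\pi_\alpha(S)$ is generically finite, of some degree $d_S\ge 1$.

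Next I would settle case (1). If $S$ is $s_\alpha$-vertical then $T_xS$ contains the relative tangent line of $\pi_\alpha$ at every generic $x\in S$, so each conormal covector to $S$ annihilates that line; a dimension count then gives $T^{\ast}_{S,x}X=\pi_\alpha^{\ast}\bigl(T^{\ast}_{\pi_\alpha(S),\pi_\alpha(x)}X_\alpha\bigr)$. Hence the map induced by $\pi_\alpha$ from $\overline{T^{\ast}_SX}$ to $\overline{T^{\ast}_{\pi_\alpha(S)}X_\alpha}$ is generically a $\mathbb P^1$-bundle, so the pushforward of the fundamental class of $\overline{T^{\ast}_SX}$ vanishes in top Borel--Moore homology (proper pushforward along a map with positive-dimensional fibres), i.e.\ $\Phi_\alpha[\overline{T^{\ast}_SX}]=0$. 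Therefore $s_\alpha\cdot[\overline{T^{\ast}_SX}]=\Phi_\alpha[\overline{T^{\ast}_SX}]-[\overline{T^{\ast}_SX}]=-[\overline{T^{\ast}_SX}]$, which is \eqref{eq:vertical}.

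For case (2), if $S$ is $s_\alpha$-horizontal then only the top stratum survives in top Borel--Moore homology, so the pushforward of $[\overline{T^{\ast}_SX}]$ to the $T^{\ast}X_\alpha$-side equals $d_S$ times the conormal class of $\pi_\alpha(S)$; applying the refined Gysin pullback of $\pi_\alpha$ returns an \emph{effective} cycle on $T^{\ast}X$, the closure of the honest preimage. Its top-dimensional components are $\overline{T^{\ast}_SX}$ together with the conormal closures $\overline{T^{\ast}_{S'}X}$ of the $s_\alpha$-vertical orbits $S'$ with $\pi_\alpha(S')\subseteq\overline{\pi_\alpha(S)}$, i.e.\ with $\overline{S'}\subset\overline{\pi_\alpha^{-1}(\pi_\alpha(S))}$. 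Subtracting $\mathrm{id}$, and using that the normalisation of \cite[\S3]{Hotta85} pins the coefficient of $\overline{T^{\ast}_SX}$ to $1$, one gets \eqref{eq:horizontal} with each $n_{S,S'}$ an intersection multiplicity of the preimage cycle, hence a non-negative integer. The moment-map constraint $\mu\bigl(\overline{T^{\ast}_{S'}X}\bigr)\subset\mu\bigl(\overline{T^{\ast}_SX}\bigr)$ is then automatic: $\mu_X$ and $\mu_{X_\alpha}$ are intertwined by the inclusion $\pi_\alpha^{\ast}T^{\ast}X_\alpha\hookrightarrow T^{\ast}X$, and neither the pushforward nor the Gysin pullback enlarges the moment image --- this is the gradedness of the Hotta--Rossmann action recorded in \cite[Lemma 1 \S3 and Theorem \S4]{Hotta85}.

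The step I expect to be the main obstacle is the one I have glossed over above: fixing the normalisation of $\Phi_\alpha$ so that the coefficient of $\overline{T^{\ast}_SX}$ in \eqref{eq:horizontal} is exactly $+1$, together with the effectivity of the Gysin pullback and the precise determination of which vertical orbits in the $\mathbb P^1$-saturation actually occur (the excess-intersection contribution over the locus where the fibre degenerates). This is a rank-one problem: one restricts to a generic $\mathbb P^1$-fibre of $\pi_\alpha$ lying over $\pi_\alpha(S)$ and analyses the resulting $\mathrm{SL}_2$ (minimal-parabolic) model according to whether $\alpha$ is complex, noncompact imaginary, or real for $S$ --- three elementary but distinct local pictures of how the conormal bundles of $S$ and of the vertical orbits in the fibre sit inside $T^{\ast}X$. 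These are exactly the computations carried out in \cite[\S\S3--4]{Hotta85} and summarised in \cite[Proposition 8]{Tanisaki}; for the groups of type $\mathrm G_2$ one only needs the handful of instances dictated by Table~\ref{orbitdata3}.
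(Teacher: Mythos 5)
First, note that the paper itself offers no proof of Theorem \ref{theo:Waction}: it is stated as a quotation of \cite[Sections 3 and 4]{Hotta85} and \cite[Proposition 8]{Tanisaki}, and the surrounding text (the gradedness statement attributed to \cite[Lemma 1 \S3 and Theorem \S4]{Hotta85}) is likewise cited rather than derived. Your proposal is therefore not being measured against an argument in the paper but against the literature. As an outline of the Hotta--Tanisaki argument it is faithful: the action of $s_\alpha$ is built from a push--pull correspondence along the $\mathbb{P}^1$-fibration $\pi_\alpha$, vertical conormal classes are $(-1)$-eigenvectors because their pushforward dies along the one-dimensional fibres, horizontal classes acquire effective corrections supported on the vertical orbits in $\overline{\pi_\alpha^{-1}(\pi_\alpha(S))}$, and the moment-map constraint follows from the gradedness of the action. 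This matches the role the theorem plays later in the paper (e.g.\ Table \ref{eq:ccactionOnConormal} and the derivation of \eqref{eq:nSiSj=1}, \eqref{eq:nSiS9=2}).

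The gap is that the steps you yourself flag as ``the main obstacle'' are precisely the content of the theorem, and you resolve them only by pointing back at \cite{Hotta85} and \cite{Tanisaki} --- the references the statement already carries. Two of these steps are not cosmetic. First, the coefficient $+1$ of $\overline{T_S^{\ast}X}$ in \eqref{eq:horizontal} does not follow from your push--pull description alone: for a horizontal orbit the map $\pi_\alpha|_S$ may have degree $d_S=2$ (the type-I noncompact imaginary case), so the naive ``pushforward then Gysin pullback'' produces $d_S$ copies of the saturation cycle, and pinning the self-coefficient to $1$ genuinely requires the rank-one $\mathrm{SL}_2$ analysis, not just a choice of normalisation of $\Phi_\alpha$. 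Second, the vanishing $\Phi_\alpha[\overline{T_S^{\ast}X}]=0$ for vertical $S$ needs the correct degree bookkeeping in Borel--Moore homology (the operator involves a refined Gysin map on the correspondence variety $X\times_{X_\alpha}T^{\ast}X_\alpha$, not a bare pushforward to $T^{\ast}X_\alpha$, where the class would vanish for trivial degree reasons and the operator would be identically zero). Neither point is fatal --- both are settled in the cited sources --- but as written your text is an annotated citation rather than a proof, which is exactly the status the theorem already has in the paper.
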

Although the $W$-action on Borel-Moore homology has been the subject
of extensive study, we do not know of any general method %is currently known 
for explicitly determining the action of a simple reflection on the
conormal orbits. In Table~\ref{tab:Wactiononconormal}, at the end of
the next section, we present an explicit depiction of this action in
the case of $X={}^{\vee}\mathrm{G}_{2}/{}^{\vee}B$.  

Before turning to the discussion of the $W$-equivariance of the
characteristic cycle map \eqref{eq:CC-map}, we briefly elaborate on
the relationship between this $W$-action and the Springer
correspondence, as described in \cite[Section 4]{Rossmann91}. Let $N
\in \mathfrak{g}$ be a nilpotent element. Denote by $\mathcal{O}_K$
the $K$-orbit of $N$, and let $A_K(N)$ and $A_G(N)$ denote the
component groups of the centralizers of $N$ in $K$ and $G$,
respectively. 

According to  \eqref{eq:horizontal}, the $\mathbb{Z}$-module
\begin{equation}\label{eq:Z-module}
\
\bigoplus_{\{S\in K\setminus X:\,
  \mu\left(\overline{T_{S}^{\ast}X}\right)\, \subset\,
  \overline{\mathcal{O}}_{K} \}} \mathbb{Z} \, \overline{T_S^\ast X},
\end{equation}
is $W$-invariant. Hence, the quotient
\begin{equation}\label{eq:Z-module-quotient}
M(\mathcal{O}_K):=\
\bigoplus_{S\in K\setminus X:\,
  \mu\left(\overline{T^{\ast}_{S}X}\right)\, \subset\,
  \overline{\mathcal{O}}_{K}\,} \mathbb{Z}\, \overline{T_S^{\ast}X}/  
\ \bigoplus_{S\in K\setminus X:\,
  \mu\left(\overline{T^{\ast}_{S}X}\right)\, \varsubsetneq\,
  \overline{\mathcal{O}}_{K}\,} \mathbb{Z} \, \overline{T_S^{\ast}X}
\end{equation} 
is naturally a $W$-module, with basis indexed by the $K$-orbits $S$
such that $\mu(T_S^{\ast}X) = \overline{\mathcal{O}}_K$. 
By Springer's correspondence, in the formulation of \cite[Theorem
  4.1]{Rossmann91},  
%following \cite[(Springer's) Theorem 4.1]{Rossmann} version of
%Springer correspondance 
there is a representation of $W \times A_G(N)$ on 
$\mathrm{H}_{\mathrm{top}}^{\mathrm{BM}}(\mu^{-1}(N),\mathbb{Z})$,
which decomposes as a direct sum
\begin{equation}\label{eq:Springer-representation}
\mathrm{H}_{\mathrm{top}}^{\mathrm{BM}}(\mu^{-1}(\eta),\mathbb{Z})=\bigoplus_{\mu\in
  \widehat{A_G(N)}} 
\phi_\mu\otimes \mu,
\end{equation}
where  $\widehat{A_G(N)}$ denotes the set of irreducible
$\mathbb{Z}[A_{G}(N)]$-modules, and $\phi_\mu$ is either zero or an
irreducible $\mathbb{Z}[W]$-module.
% on which $A_G(N)$ acts trivially. 
%and $V_\mu$ is a module on which $A_G(N)$ acts via $\mu$, while $W$
%acts trivially. 
% and $V_\mu$ is a representation of $A_G(N)$ (with type $\mu$) on
% which $W$ acts trivially. 

Since $A_K(N)$ maps to $A_G(N)$, we may consider the space of
$A_K(N)$-invariants in $\mathrm{H}_{\mathrm{top}}^{\mathrm{BM}}(\mu^{-1}(N),
\mathbb{Z})$, denoted 
\[
\mathrm{H}_{\mathrm{top}}^{\mathrm{BM}}(\mu^{-1}(N), \mathbb{Z})^{A_K(N)}.
\]
By \cite[Theorem 2.9.1]{Trapa2005}, there is a $\mathbb{Z}[W]$-module
isomorphism 
\begin{equation}\label{eq:Springer-representation2}
M(\mathcal{O}_K)\cong \mathrm{H}_{\mathrm{top}}^{\mathrm{BM}}(\mu^{-1}(N), \mathbb{Z})^{A_K(N)}.
\end{equation}
It follows that
\begin{align}\label{eq:cardinalite-dimension}
\#\left\{S\, \in\,  K\setminus X\, :\ 
 \mu(\overline{T_S^{\ast}X})=\overline{\mathcal{O}}_K 
 \right\}=\mathrm{rank}\left( \mathrm{H}_{\mathrm{top}}^{\mathrm{BM}}(\mu^{-1}(N),
 \mathbb{Z})^{A_K(N)} \right).   
\end{align}
(\emph{cf.}~\cite[Proposition 2.15 and Remark 2.16]{CNT}).

For groups of type $\mathrm{G}_{2}$, Carter attributes the computation of
the Springer correspondence to Springer himself.
Carter displays the description of the $W$-action on
$\mathrm{H}_{\mathrm{top}}^{\mathrm{BM}} 
(T_{{}^{\vee}K}^{\ast}({}^{\vee}\mathrm{G}_{2}/{}^{\vee}B),
\mathbb{Z})$ on \cite[page 427]{Carter85}. In Section
\ref{section:G2computations} this description will allow us to 
explicitly describe the $W$-action on a basis of 
$\mathrm{H}_{\mathrm{top}}^{\mathrm{BM}}(\mu^{-1}(N),
\mathbb{Z})^{A_{{}^{\vee}K}(N)}$ for $N$ in any nilpotent ${}^{\vee}K$-orbit
$\mathcal{O}$. 
%For more details, see the comment after Table \ref{tab:Wactiononconormal}. 

\subsection{The $W$-equivariance of the characteristic cycle map}

It is time to put the $W$-actions to good use. We maintain the general
setting of $G$ being a connected complex reductive algebraic group as
in the previous section.
\cite[Theorem 
  1]{Tanisaki}, asserts that taking characteristic cycle map is
$W$-equivariant, \emph{i.e.}
\begin{equation}\label{eq:W-equivariant}
CC(w\cdot P)=w\cdot CC(P), \quad P\in \mathscr{K}(X,K), \ w\in W.
\end{equation}
The $W$-equivariance of $CC(\cdot)$ 
is the central tool upon which our method for computing the
characteristic cycles of irreducible perverse sheaves on
${}^{\vee}\mathrm{G}_{2}/ \,{^\vee}B$ relies.

To simplify the notation, from now on, for each $K$-orbit $S$ in $X$
and each irreducible perverse sheaf $P$ on $X$, we write 
$$
[T_S]=\overline{T_S^\ast X}\quad\text{and}\quad \chi_{S}^{}(P) =
\chi_{S}^{\mathrm{mic}}(P).
$$
The following result  plays an important role in 
simplifying the computations in the next section.
\begin{prop}\label{prop:CC2}
Let $P \in \mathscr{K}(X,K)$ be irreducible.
Suppose $\alpha \in \tau (P)$ (see \eqref{eq:verticalP}). If 
$\chi_{S} (P) \neq 0$ then $S$ is $s_\alpha$-vertical. 
That is, all conormal bundles appearing in ${CC}(P)$ have
$s_\alpha$-vertical orbits. 
\end{prop}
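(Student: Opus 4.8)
The plan is to combine the $W$-equivariance of $CC$ with the graded nature of the Hotta--Rossmann action. Suppose $\alpha \in \tau(P)$, so that $s_\alpha \cdot P = -P$ by \eqref{eq:verticalP}. Applying $CC$ and using $W$-equivariance \eqref{eq:W-equivariant} gives
\[
s_\alpha \cdot CC(P) = CC(s_\alpha \cdot P) = CC(-P) = -CC(P).
\]
Now write $CC(P) = \sum_{S} \chi_S(P)\, [T_S]$ as in \eqref{cc}. The goal is to show that every $S$ with $\chi_S(P) \neq 0$ is $s_\alpha$-vertical. The first step is to suppose, for contradiction, that some $s_\alpha$-horizontal orbit $S_0$ appears with $\chi_{S_0}(P) \neq 0$, and among all such horizontal orbits pick one, say $S_0$, that is maximal with respect to the closure order (equivalently, whose image $\mu(\overline{T^\ast_{S_0}X})$ is maximal, using the fact from Theorem~\ref{theo:Waction}\eqref{theo:Waction2} that the horizontal expansion only introduces orbits $S'$ with $\mu(\overline{T^\ast_{S'}X}) \subsetneq \mu(\overline{T^\ast_{S_0}X})$, hence strictly smaller).

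The key step is then to track the coefficient of $[T_{S_0}]$ on both sides of $s_\alpha \cdot CC(P) = -CC(P)$. On the right the coefficient is $-\chi_{S_0}(P)$. On the left, by Theorem~\ref{theo:Waction}, a vertical orbit $S$ contributes $s_\alpha \cdot [T_S] = -[T_S]$, so $[T_{S_0}]$ cannot come from a vertical orbit other than $S_0$ itself; a horizontal orbit $S$ contributes $[T_S] + \sum_{S'} n_{S,S'}[T_{S'}]$ where the $S'$ are $s_\alpha$-vertical — so a horizontal $S_0$ (which is not vertical) can only receive its $[T_{S_0}]$ coefficient from the leading term when $S = S_0$, giving coefficient $+\chi_{S_0}(P)$. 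Thus the coefficient of $[T_{S_0}]$ on the left is $+\chi_{S_0}(P)$ plus possibly contributions from other horizontal orbits $S \neq S_0$ whose expansion contains $[T_{S_0}]$; but such an $S$ must be $s_\alpha$-horizontal with $\mu(\overline{T^\ast_{S_0}X}) \subsetneq \mu(\overline{T^\ast_S X})$, i.e. strictly larger, contradicting the maximality of $S_0$ among horizontal orbits appearing in $CC(P)$ — unless $\chi_S(P) = 0$ for all such $S$. Hence the left-hand coefficient of $[T_{S_0}]$ is exactly $\chi_{S_0}(P)$, and comparing with the right-hand side yields $\chi_{S_0}(P) = -\chi_{S_0}(P)$, so $\chi_{S_0}(P) = 0$, a contradiction.

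The main obstacle I anticipate is making the maximality argument airtight: one needs that whenever a horizontal orbit $S$ contributes $[T_{S_0}]$ to $s_\alpha \cdot [T_S]$ with $S \ne S_0$, the orbit $S_0$ is genuinely \emph{strictly} below $S$ in the relevant order (so the induction/maximality has no escape), and that the leading coefficient in the horizontal formula \eqref{eq:horizontal} is exactly $1$ and not altered. Both of these are guaranteed by the precise statement of Theorem~\ref{theo:Waction}: part (2) says $s_\alpha \cdot \overline{T^\ast_S X} = \overline{T^\ast_S X} + \sum_{S'} n_{S,S'}\overline{T^\ast_{S'}X}$ with the $S'$ all $s_\alpha$-vertical and satisfying $\mu(\overline{T^\ast_{S'}X}) \subset \mu(\overline{T^\ast_S X})$; since $S_0$ is horizontal it is not among the $S'$, so indeed $[T_{S_0}]$ forces $S = S_0$ (leading term) or $\mu(\overline{T^\ast_{S_0}X}) \subsetneq \mu(\overline{T^\ast_S X})$. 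Ordering the horizontal orbits appearing in $CC(P)$ by the partial order given by inclusion of their $\mu$-images and inducting downward from the maximal ones then closes the argument cleanly.
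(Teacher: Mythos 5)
Your argument is correct and is essentially the paper's proof: both apply the $W$-equivariance \eqref{eq:W-equivariant} to the identity $s_\alpha\cdot CC(P)=-CC(P)$ and compare coefficients in the basis of conormal bundles, the decisive point being that in Theorem~\ref{theo:Waction}\eqref{theo:Waction2} all correction terms $\overline{T^{*}_{S'}X}$ attached to a horizontal orbit are themselves $s_\alpha$-vertical, so a horizontal $[T_{S_0}]$ can only arise from the leading term of its own expansion. Your maximality induction and the claimed strictness of the $\mu$-image inclusions are superfluous (the verticality of the $S'$, which you also invoke, already forces $S=S_0$), and the coefficient comparison then gives $\chi_{S_0}(P)=-\chi_{S_0}(P)$, i.e.\ exactly the paper's conclusion that $2\chi_{S}(P)=0$ for every $s_\alpha$-horizontal $S$.
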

\begin{proof} 
We have
$$
CC(P)\ =\ \sum_{S,\, s_\alpha\mbox{-vert.}} \chi_{S}^{}(P)
  \, [T_{S}]\, +  \sum_{S,\, s_\alpha\mbox{-hor.}}
  \chi_{S}^{}(P)   \,   [T_{S}].
$$
Now, since $\alpha \in \tau (P)$, equalities (\ref{eq:horizontal}) and
(\ref{eq:W-equivariant}) imply   
  \begin{align*}
  &s_{\alpha}\cdot {CC}(P) = 
{CC}(s_\alpha\cdot P)  
  =\mathrm{Ch}(-P).
\end{align*}
Therefore
\begin{align*}
  \sum_{S,\, s_\alpha\mbox{-vert.}} \chi_{S}^{}(P)
  \,s_{\alpha} \cdot [T_{S}] +  \sum_{S,\, s_\alpha\mbox{-hor.}}
  \chi_{S}^{}(P)   \,s_{\alpha} \cdot   [T_{S}] =&
   -\sum_{S,\, s_\alpha\mbox{-vert.}} \chi_{S}^{}(P)
  \, [T_{S}]\ -\sum_{S,\, s_\alpha\mbox{-hor.}}
  \chi_{S}^{}(P)   \,   [T_{S}],
\end{align*}
and by (\ref{eq:vertical}) and (\ref{eq:horizontal}), we  write
\begin{align*}  
  &\sum_{S,\, s_\alpha\mbox{-vert.}} -\chi_{S}^{}(P)
  \,[T_{S}] +  \sum_{S,\, s_\alpha\mbox{-hor.}}
  \chi_{S}(P)   \,s_{\alpha} \cdot   [T_{S}] =
   -\sum_{S,\, s_\alpha\mbox{-vert.}} \chi_{S}^{}(P)
  \, [T_{S}] \, -
  \sum_{S,\, s_\alpha\mbox{-hor.}}
  \chi_{S}^{}(P)   \,   [T_{S}]\\
  &\Longrightarrow   \sum_{S,\, s_\alpha\mbox{-hor.}}
  \chi_{S}^{}(P)   \,s_{\alpha} \cdot   [T_{S}] =
   -  \sum_{S,\, s_\alpha\mbox{-hor.}}
   \chi_{S}^{}(P)   \,   [T_{S}]\\
   &\Longrightarrow  \sum_{S,\, s_\alpha\mbox{-hor.}}
  \chi_{S}^{}(P)   \left([T_{S}] + \sum_{S'}
  n_{S,S'} \,  [T_{S'}]\right)=
   -  \sum_{S,\, s_\alpha\mbox{-hor.}}
   \chi_{S}^{}(P)   \,   [T_{S}]\\
   &\Longrightarrow  \sum_{S,\, s_\alpha\mbox{-hor.}}
  \chi_{S}^{}(P)   \left(2[T_{S}] + \sum_{S'}
  n_{S,S'} \,  [T_{S'}]\right) = 0.
\end{align*}  
Therefore, for all $s_\alpha\mbox{-horizontal orbit }S$, we have
$\chi_{S}^{}(P)=0.$ 
The result follows.  
\end{proof}

Now we return to the setting of $G = {^\vee}\mathrm{G}_{2}$.
Using Proposition \ref{prop:CC2} we can deduce which of the
${}^{\vee}K$-orbits $S$ are horizontal and which ones are vertical.
To see this, recall that  by Lemma \ref{lem:lemorbit}
the conormal bundle of orbit $S_i$ appears with multiplicity one in
the characteristic cycle of $P(\xi_i)$ for $1\leq i\leq 9$
(\ref{eq:IrrObj1}).    Combining this observation with
the tau-invariants  in (\ref{eq:tau}) and Proposition \ref{prop:CC2},
we deduce that
\begin{equation}
\begin{aligned}\label{eq:verticaldescription}
S_i \text{ is $s_{1}$-vertical }\quad&\text{if and only
  if}\quad i\in\{1,4,6,8,9\},\\
S_i \text{ is $s_{2}$-vertical }\quad&\text{if and only
  if}\quad i\in\{2,3,5,7,9\}.
\end{aligned}
\end{equation}

As a warm-up for the computations in the next section,
% we demonstrate the utility of Proposition \ref{prop:CC2} by 
we compute the characteristic cycles of 
$P(\xi_3)$, $P(\xi_4)$, and $P(\xi_9)$. By the Bruhat
ordering relation described in  Diagram (\ref{eq:BruhatOrbits}), we have
$$\overline{S_3}\, =\, S_0\, \cup\,  S_1\, \cup\,  S_3
\quad \text{and} \quad \overline{S_4}\, =\, S_0\, \cup\,  S_2\, \cup\,  S_4.$$
Consequently, Lemma \ref{lem:lemorbit} tells us that 
\begin{align*}
CC(P(\xi_3)) &= \chi_{S_0}^{}(P(\xi_3))\, [T_{S_0}] +
\chi_{S_1}^{}(P(\xi_3))\, [T_{S_1}] + [T_{S_3}], \\ 
CC(P(\xi_4)) &= \chi_{S_0}^{}(P(\xi_4))\, [T_{S_0}] +
\chi_{S_2}^{}(P(\xi_4))\, [T_{S_2}] + [T_{S_4}]. 
\end{align*}
Since \( s_{2} \in \tau(P(\xi_3)) \), and  \( S_0 \) and \( S_1 \) are
\( s_{2} \)-horizontal, the contrapositive of
Proposition~\ref{prop:CC2} implies that 
\begin{equation}\label{eq:CC3}
CC(P(\xi_3)) = [T_{S_3}].
\end{equation}
Similarly, since \( s_1 \in \tau(P(\xi_4)) \), and \( S_0 \) and \(
S_2 \) are \( s_{1} \)-horizontal, we deduce that 
\begin{equation}\label{eq:CC4}
CC(P(\xi_4)) = [T_{S_4}].
\end{equation}
The case of $P(\xi_9)$ is simpler, since $\tau(P(\xi_9))=\{ {^\vee}\alpha_1,
{^\vee}\alpha_2 \} $, and \( S_9 \) is the only orbit that is \( s_\alpha
\)-vertical for both simple roots. By the contrapositive of
Proposition~\ref{prop:CC2}, we deduce that $\chi^{}_{S_i}(P(\xi_9))=0$
for $0\leq i\leq 8$, that is
\begin{equation}\label{eq:CC9}
CC(P(\xi_9))\ =\ [T_{S_9}].
\end{equation}

\subsection{Interim progress on the $W$-action on $\mathscr{L}(X, {^\vee}K)$}

Using the $W$-equivariance of the characteristic cycles map, we are
able to provide a better description of the $W$-action 
on $\mathscr{L}(X, {^\vee}K)$. 
The complete description, provided in
Table~(\ref{tab:Wactiononconormal}), will be obtained alongside the
computation of the characteristic cycles of the remaining perverse
sheaves $P(\xi_i)$, for $5\leq i\leq 11$ (\ref{eq:IrrObj1}).

Let $\alpha_{\ell}$, $\ell = 1,2$, be a simple root and recall the
projection $\pi_{\ell} = \pi_{\alpha_{\ell}}$  (\ref{pimap}).  For each
$s_{\ell}$-horizontal orbit $S \in {^\vee}K \backslash X$, let 
$S'  \in  {^\vee}K \backslash X$ denote the unique
open orbit in the closure
$\overline{\pi_{\ell}^{-1}(\pi_{\ell}(S))}$. Then for any other ${^\vee}K$-orbit
$S'' \subset \overline{\pi_{\ell}^{-1}(\pi_{\ell}(S))}$, we have 
\begin{equation}\label{eq:SS'S''}
\dim(S'') \leq \dim(S') - 1.
\end{equation} 
Notice from Table~\ref{eq:ccaction} and (\ref{eq:tau}), that  
if $\alpha_{\ell} \notin \tau(P(\xi_{i}))$ for $0 \leq i \leq 6$, 
then there exists a unique perverse sheaf $P(\xi_j)$ 
such that $\dim(S_j) = \dim(S_{i}) + 1$ and $P(\xi_j)$ appears with
multiplicity one in $s_{\ell} \cdot P(\xi_{i})$. 
It follows from the linearity of the characteristic cycle map and
Lemma~\ref{lem:lemorbit} that $[T_{S_j}]$ appears with multiplicity
one in  
\[
CC(s_{\ell} \cdot P(\xi_{i})).
\]
On the other hand, using equation~\eqref{eq:horizontal} and 
\eqref{eq:SS'S''} we deduce that $[T_{S_j}]$ does not appear in
$s_{\ell} \cdot [T_S]$ 
for any orbit $S \neq S_{i}$ with $\chi_S(P(\xi_{i})) \neq 0$. 
By equation~\eqref{eq:W-equivariant} we have
\begin{equation}\label{eq:alpha-equivariance}
CC(s_{\ell} \cdot P(\xi_{i})) = s_{\ell} \cdot CC(P(\xi_{i})),
\end{equation}
we conclude that
\begin{equation}\label{eq:nSiSj=1}
n_{S_{i}, S_j} = 1, \quad 0 \leq i \leq 6.
\end{equation}
In the case of $P(\xi_i)$ for $i = 7, 8$, we see from 
Table~\ref{eq:ccaction} that $[T_{S_9}]$ appears with multiplicity two in 
$CC(s_\alpha \cdot P(\xi_i))$. As before, equation~\eqref{eq:SS'S''} 
implies that $[T_{S_9}]$ does not appear in $s_\alpha \cdot [T_S]$ 
for any orbit $S$ with $\dim(S) < 5$. Once again, using equality 
\eqref{eq:alpha-equivariance}, we deduce that
\begin{equation}\label{eq:nSiS9=2}
n_{S_i, S_9} = 2,\quad i=7,8.
\end{equation}
 
In order to gain more information about further coefficients $n_{S_{i}, S_{j}}$
we make use of the moment map $\mu$ as it appears in Theorem \ref{theo:Waction}.
We first determine the image under $\mu$ of the conormal bundles to
the ${^\vee}K$-orbits on $X$.   The reader should review
(\ref{eq:nilpotent-orbits}) before reading the following proposition.
\begin{prop}\label{prop:momentmapimage}
In the setting of Sections \ref{Korbits} and \ref{sec:momap}, the moment map 
%$$\mu: T_{{}^{\vee}K}^\ast X\, =\, 
%\bigcup_{i=0}^{9} [T_{S_i}]\, \longrightarrow\,  \mathcal{N}_{X,
%{}^{\vee}K}({}^{\vee}\mathfrak{g}_2^{\ast}),$$ 
%of the conormal bundle to the ${}^{\vee}K$-orbits on $X$,  
takes the following values
\begin{align*}%\label{eq:momentmapimage}
\begin{aligned}
\mu([T_{S_9}])\, &=\,\mathcal{O}_{0}\\
\mu([T_{S_8}])\, &=\,\overline{\mathcal{O}}_{3}\\
\mu([T_{S_6}])\, &=\,\mu([T_{S_7}])=\overline{\mathcal{O}}_{4}\\
\mu([T_{S_1}])\, &=\,\mu([T_{S_3}])=\overline{\mathcal{O}}_{5,1}\\
\mu([T_{S_2}])\, &=\,\mu([T_{S_4}])=\mu([T_{S_5}])=\overline{\mathcal{O}}_{5,2}\\
\mu([T_{S_0}])\, &=\, \overline{\mathcal{O}}_6.\\
\end{aligned}
\end{align*}
\end{prop}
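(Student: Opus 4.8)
\emph{Proof plan.} The strategy is to climb the Bruhat order \eqref{eq:BruhatOrbits} for the ten orbits $S_0,\dots,S_9$, combining the associated-variety computations already in hand, the grading of the Hotta--Rossmann action from Theorem~\ref{theo:Waction}, and the Springer-theoretic counting identity \eqref{eq:cardinalite-dimension}. First the easy cases. The closed orbits are exactly \eqref{eq:momentmapfor012}: $\mu([T_{S_0}])=\overline{\mathcal{O}}_6$, $\mu([T_{S_1}])=\overline{\mathcal{O}}_{5,1}$, $\mu([T_{S_2}])=\overline{\mathcal{O}}_{5,2}$. For $S_3$, $S_4$ and $S_9$ the full characteristic cycle is already known: $CC(P(\xi_3))=[T_{S_3}]$, $CC(P(\xi_4))=[T_{S_4}]$, $CC(P(\xi_9))=[T_{S_9}]$ by \eqref{eq:CC3}, \eqref{eq:CC4}, \eqref{eq:CC9}. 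Since $\mathrm{AV}(P(\xi))=\mu(\mathrm{Ch}(P(\xi)))$ and $\mathrm{Ch}(P(\xi))$ is the support of $CC(P(\xi))$, this gives $\mu([T_{S_3}])=\mathrm{AV}(P(\xi_3))=\overline{\mathcal{O}}_{5,1}$ and $\mu([T_{S_4}])=\mathrm{AV}(P(\xi_4))=\overline{\mathcal{O}}_{5,2}$ by Proposition~\ref{prop:Cells-AV}, and $\mu([T_{S_9}])=\mathrm{AV}(P(\xi_9))=\mathcal{O}_0$ by \eqref{eq:AVfor9}.

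It remains to treat $S_5,S_6,S_7,S_8$. For each such $j$, Lemma~\ref{lem:lemorbit} gives $\chi_{S_j}(P(\xi_j))=1\ne 0$, so $\mu([T_{S_j}])\subseteq\mathrm{AV}(P(\xi_j))$; by Proposition~\ref{prop:Cells-AV} this yields $\mu([T_{S_5}]),\mu([T_{S_8}])\subseteq\overline{\mathcal{O}}_{5,2}$ and $\mu([T_{S_6}]),\mu([T_{S_7}])\subseteq\overline{\mathcal{O}}_{5,1}$. Next I would invoke the counting identity \eqref{eq:cardinalite-dimension}: using Carter's description of the Springer correspondence for $\mathrm{G}_2$ on \cite[page 427]{Carter85} together with the component-group data for the five-dimensional orbits ($A_{{}^\vee K}(N)=S_3$ on $\mathcal{O}_{5,1}$, and $A_{{}^\vee K}(N)=\mathbb{Z}/2\mathbb{Z}$ on $\mathcal{O}_{5,2}$, the latter mapping onto a transposition in $A_{{}^\vee\mathrm{G}_2}(N)=S_3$), one finds that the number of ${}^\vee K$-orbits $S$ with $\mu([T_S])$ equal to a fixed $\overline{\mathcal{O}}$ is $1$ for each of $\mathcal{O}_6$, $\mathcal{O}_3$, $\mathcal{O}_0$, is $2$ for each of $\mathcal{O}_{5,1}$ and $\mathcal{O}_4$, and is $3$ for $\mathcal{O}_{5,2}$ (a total of $10$). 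The cases already settled exhibit $S_0$ over $\overline{\mathcal{O}}_6$, $\{S_1,S_3\}$ over $\overline{\mathcal{O}}_{5,1}$, $\{S_2,S_4\}$ over $\overline{\mathcal{O}}_{5,2}$, $S_9$ over $\mathcal{O}_0$; hence the slots over $\overline{\mathcal{O}}_6$, $\overline{\mathcal{O}}_{5,1}$, $\mathcal{O}_0$ are filled, and exactly one more orbit lies over $\overline{\mathcal{O}}_{5,2}$. Using $\overline{\mathcal{O}}_{5,1}\cap\overline{\mathcal{O}}_{5,2}=\overline{\mathcal{O}}_4$ (read off from \eqref{eq:nilpotent-orbits}) we get $\mu([T_{S_6}]),\mu([T_{S_7}])\in\{\overline{\mathcal{O}}_4,\overline{\mathcal{O}}_3\}$, and $\mu([T_{S_8}])\in\{\overline{\mathcal{O}}_{5,2},\overline{\mathcal{O}}_4,\overline{\mathcal{O}}_3\}$.

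Now I would feed in the grading of Theorem~\ref{theo:Waction}. Since $[T_{S_8}]$ occurs in $s_1\cdot[T_{S_5}]$ and $[T_{S_7}]$ occurs in $s_2\cdot[T_{S_6}]$ with coefficient $1$ by \eqref{eq:nSiSj=1}, we have $\mu([T_{S_8}])\subseteq\mu([T_{S_5}])$ and $\mu([T_{S_7}])\subseteq\mu([T_{S_6}])$. If $\mu([T_{S_5}])\subsetneq\overline{\mathcal{O}}_{5,2}$, then none of $S_5,S_6,S_7,S_8$ could supply the one remaining orbit over $\overline{\mathcal{O}}_{5,2}$ (the inclusion forces $\mu([T_{S_8}])\subsetneq\overline{\mathcal{O}}_{5,2}$ too, while $\mu([T_{S_6}]),\mu([T_{S_7}])\subseteq\overline{\mathcal{O}}_4$), contradicting the count; hence $\mu([T_{S_5}])=\overline{\mathcal{O}}_{5,2}$. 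Likewise, $\mu([T_{S_6}])=\overline{\mathcal{O}}_3$ would force $\mu([T_{S_7}])=\overline{\mathcal{O}}_3$, giving two orbits over $\overline{\mathcal{O}}_3$, a contradiction; hence $\mu([T_{S_6}])=\overline{\mathcal{O}}_4$, and consequently $\mu([T_{S_8}])\in\{\overline{\mathcal{O}}_4,\overline{\mathcal{O}}_3\}$ as well. At this point exactly one of $\mu([T_{S_7}]),\mu([T_{S_8}])$ equals $\overline{\mathcal{O}}_3$ and the other $\overline{\mathcal{O}}_4$.

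The last step --- deciding that it is $S_8$, not $S_7$, whose image is $\overline{\mathcal{O}}_3$ --- is the one the formal machinery above does not resolve; this ambiguity genuinely reflects the asymmetry between the long and short $\mathrm{SL}_2$-factors of ${}^\vee K$, and it is the main obstacle. I would settle it by a direct computation. Via the identification \eqref{cotiso}, \eqref{eq:TSX}, the fibre of $T^{\ast}_{S_j}X$ over the base point $\mathfrak{b}'_j=\mathrm{Ad}(y_j^{-1})({}^\vee\mathfrak{b})$ attached to $y_j$ in Table~\ref{orbitdata} is, via the Killing form, the space $\mathfrak{n}'_j\cap\mathfrak{p}$, where $\mathfrak{n}'_j$ is the nilradical of $\mathfrak{b}'_j$ and $\mathfrak{p}$ is the $(-1)$-eigenspace of the involution $\mathrm{Ad}(e(\rho/2))$; and $\mu([T_{S_j}])$ is the closure of the ${}^\vee K$-orbit of a generic element of this space. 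For $j=8$ this space is one-dimensional (as $\dim X-\dim S_8=1$), so the computation reduces to identifying the ${}^\vee K$-orbit of a single nilpotent vector against {\DJ}okovi\'c's list \eqref{eq:nilpotent-orbits} --- a finite linear-algebra check, easily carried out with the Atlas software --- which gives $\mu([T_{S_8}])=\overline{\mathcal{O}}_3$. Together with the count for $\mathcal{O}_3$ this forces $\mu([T_{S_7}])=\overline{\mathcal{O}}_4$, completing the proof. An alternative, which the authors may well prefer, is to defer this last case and extract it from the explicit $W$-action on $\mathscr{L}(X,{}^\vee K)$ (Table~\ref{tab:Wactiononconormal}) computed in the next section.
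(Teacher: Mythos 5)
Your proposal is correct, and its skeleton coincides with the paper's own proof: the closed orbits come from Samples' tables \eqref{eq:momentmapfor012}, the orbits $S_3$, $S_4$, $S_9$ from the characteristic-cycle identities \eqref{eq:CC3}--\eqref{eq:CC9} together with the associated varieties, the multiplicity counts over each nilpotent orbit from Carter's table via \eqref{eq:cardinalite-dimension}, and --- as you rightly single out --- the one step no amount of formal bookkeeping resolves is the explicit identification of the one-dimensional conormal fibre of $S_8$ with a root vector for a long root, matched against {\DJ}okovi\'c's list; that is exactly the computation the paper carries out. The only genuine divergence is in how you pin down $S_5$, $S_6$, $S_7$: you bound $\mu([T_{S_6}])$, $\mu([T_{S_7}])$ inside $\overline{\mathcal{O}}_{5,1}$ and $\mu([T_{S_5}])$, $\mu([T_{S_8}])$ inside $\overline{\mathcal{O}}_{5,2}$ via the cell-wise associated varieties of Proposition~\ref{prop:Cells-AV}, and then exploit the grading of Theorem~\ref{theo:Waction} through the nonvanishing coefficients $n_{S_5,S_8}=n_{S_6,S_7}=1$ of \eqref{eq:nSiSj=1} (whose derivation is purely dimensional, so there is no circularity), whereas the paper instead uses weak-order monotonicity of moment-map images ($S_3\le S_6\le S_7$ via \cite[Proposition 2.6]{CNT}) and then eliminates. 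Your route has the small advantage of not needing the CNT reference and of making the count for $\mathcal{O}_{5,2}$ explicit up front (the paper only computes that count later, in the proof of Lemma~\ref{lem=chiS5}); the paper's route needs one fewer appeal to the $W$-action tables. Either way the logic closes, so there is nothing to repair.
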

\begin{proof}
Since $S_9$ is open in $X$, it follows from 
\eqref{eq:TSX} and the definition of moment map,
that $\mu([T_{S_9}])=\mathcal{O}_{0}$. The statement concerning the
image of the conormal bundle corresponding to any of the three closed
orbits $S_0$, $S_1$ and $S_2$ 
has been verified in (\ref{eq:momentmapfor012}). Moving to the image
of $[T_{S_4}]$, we saw in equality (\ref{eq:CC4}) that    
$CC(P(\xi_4))\, =\,  [T_{S_4}]$.
Consequently, $\mu([T_{S_4}])=\mu(\mathrm{Ch}(P(\xi_4)))$ and 
by Propositions \ref{prop:HCcells} and \ref{prop:Cells-AV}, we obtain 
$$\mu([T_{S_4}])= \mu(\mathrm{Ch}(P(\xi_4)))= \mathrm{AV}(P(\xi_4))=
\overline{\mathcal{O}}_{5,2}.$$  
By (\ref{eq:CC3}), the same argument proves that 
$\mu([T_{S_3}])=\overline{\mathcal{O}}_{5,1}$.

We see that $\mu$ sends both $[T_{S_{1}}]$ and $[T_{S_3}]$ to
$\overline{\mathcal{O}}_{5,1}$.  We next prove that these are the only
two conormal bundles sent to $\overline{\mathcal{O}}_{5,1}$.  
Let $N_{5,1}\in \mathcal{O}_{5,1}$. 
%be the element denoted by $f_1$ in \cite[Table~1]{Samples}.  
According to column three of \cite[Table~1]{Samples}, we have
\[
A_K(N_{5,1}) = S_3.
\]
By the table for type \( \mathrm{G}_{2} \) on  \cite[page 427]{Carter85}, we obtain $A_K(N_{5,1}) =A_{G}(N_{5,1})$.
Consequently, by \eqref{eq:cardinalite-dimension} and \eqref{eq:Springer-representation}, we can write
\[
\#\left\{ S \in {}^\vee K \setminus X : \mu([T_S]) = \mathcal{O}_{5,1} \right\}
= \mathrm{rank}\left( \mathrm{H}_{\mathrm{top}}^{\mathrm{BM}}
(\mu^{-1}(N_{5,1}), \mathbb{Z})^{S_3} \right)
= \mathrm{rank} \ \phi_{\mathrm{triv}},
\]
where $\mathrm{triv}$ denotes the trivial $\mathbb{Z}[S_3]$-module. As noted in 
the table for type \( \mathrm{G}_{2} \) on
\cite[page 427]{Carter85}, the rank of $\phi_{\mathrm{triv}}$ is two. Hence,  
$[T_{S_1}]$ and $[T_{S_3}]$ are the only two conormal bundles whose
moment map image is equal  to $\overline{\mathcal{O}}_{5,1}$.

To determine $\mu([T_{S_8}])$ we appeal directly to the definition of
the moment map and identify $X$ with the set of Borel subalgebras of
${^\vee}\mathfrak{g}$.   The element  $S_{8}$ in Table
\ref{orbitdata} corresponds to the ${^\vee}K$-orbit  ${^\vee}K y_{8}^{-1}
\,{^\vee}B$, where $y_{8}^{-1} = \sigma_{1} \sigma_{2} g_{1}$.  This
${^\vee}K$-orbit  is identified with the ${^\vee}K$-orbit of the
Borel subalgebra  
${^\vee}\mathfrak{b}_{8} = \mathrm{Ad}(y_{8}^{-1})  \, {^\vee}\mathfrak{b}$.
Set ${^\vee}\mathfrak{t}_{8} = \mathrm{Ad}(y_{8}^{-1})  \,
{^\vee}\mathfrak{t}$ and ${^\vee}\mathfrak{n}_{8} = \mathrm{Ad}(y_{8}^{-1})  \,
{^\vee}\mathfrak{n}$ so that 
${^\vee} \mathfrak{b}_{8} =
{}^{\vee}\mathfrak{t}_8+{}^{\vee}\mathfrak{n}_8$. 
Through equation \eqref{eq:TSX} (see the remark before
\cite[Definition 2.2]{Barchini-Zierau}), we deduce  
\begin{align*}
\mu([T_{S_8}])&=  \overline{{}^\vee K \cdot
  \left({}^{\vee}\mathfrak{g}_2/
  ({}^\vee\mathfrak{k}+{}^\vee\mathfrak{b}_8)\right)^{\ast}} \\ 
&\cong \overline{{}^\vee K\cdot \left({}^\vee\mathfrak{n}_8^{-}\cap
  ({}^\vee\mathfrak{g}_2/{}^\vee\mathfrak{k})\right)}. 
\end{align*}
 Let $\theta_{8}$  be the involution of ${}^\vee\mathfrak{g}_2$ 
defined by conjugation by the element $S_8$ given in Table
\ref{orbitdata}.  Then 
$$\mathrm{Ad}(y_{8}) ({}^{\vee}\mathfrak{n}_8^-\cap
({}^\vee\mathfrak{g}_2/{}^\vee\mathfrak{k})) \cong \mathrm{Ad}(y_{8})
({}^{\vee}\mathfrak{n}_8^-\cap 
({}^\vee\mathfrak{g}_2)^{-\mathrm{Ad}(e(\rho/2))} ) = 
{}^{\vee}\mathfrak{n}^-\cap  ({}^\vee\mathfrak{g}_2)^{-\theta_8}.$$
By Table~\ref{orbitdata3}, the simple roots ${}^\vee\alpha_1$ 
and ${}^\vee\alpha_2$ are, respectively, complex and imaginary
non-compact for $S_8$. The action of $\theta_{8}$ on the roots is
given by the Weyl group element  \(p(S_8) = s_1 s_2 s_1 s_2 s_1\) from
Table~\ref{orbitdata}.  It is then a straightforward computation to verify that
\(\theta_8 \cdot {}^\vee\alpha\) is negative for every positive root
${^\vee}\alpha \neq {^\vee}\alpha_{2}$, and that every 
positive non-simple root is also complex.  
It is immediate that
for all negative roots ${}^{\vee}\alpha\neq - {^\vee}\alpha_2$,  the root
$\theta_{8} \cdot {}^\vee\alpha $ is positive, and that for
these negative roots ${^\vee}\alpha$
\[X_{{}^\vee\alpha} \notin  ({}^\vee\mathfrak{g}_2)^{-\theta_8}\]
for any non-zero root vector $X_{{^\vee}\alpha}$. 
Consequently,
$
{}^\vee\mathfrak{n}^-\cap ({}^\vee\mathfrak{g}_2)^{-\theta_8}\, =\,
\mathbb{C}X_{-{}^\vee\alpha_2}$, and  
$$
\mathfrak{n}_8^-\cap  ({}^\vee\mathfrak{g}_2/{}^\vee\mathfrak{k}) =
\mathbb{C}\, (\mathrm{Ad}(y_{8}^{-1})X_{-{}^\vee\alpha_2}). 
$$
Moreover, since ${}^\vee\alpha_2$ is a long root, it follows from
\cite[Table 1]{Dokovic} that the orbit ${{}^\vee K\cdot
  X_{-{}^\vee\alpha_2}}$ is three-dimensional. This implies that
$X_{-{}^\vee\alpha_2}$ belongs to the six-dimensional nilpotent
${}^{\vee}\mathrm{G}_{2}$-orbit $\mathcal{O}_{3,
  {^\vee}\mathrm{G}_{2}}$.   As
$y_{8} \in {^\vee}\mathrm{G}_{2}$ the 
nilpotent element  $\mathrm{Ad}(y_{8}^{-1})X_{-{}^\vee\alpha_2}$ also
belongs to $\mathcal{O}_{3, {^\vee}\mathrm{G}_{2}}$. 
Since $\mathcal{O}_3$ is the unique nilpotent ${}^{\vee}K$-orbit such
that ${}^{\vee}\mathrm{G}_{2}\cdot \mathcal{O}_3 = \mathcal{O}_{3,
  {^\vee}\mathrm{G}_{2}}$, we deduce that ${}^{\vee}K \cdot
(gX_{-{}^\vee\alpha_2}g^{-1}) = \mathcal{O}_3$. 
This implies  $\mu([T_{S_8}]) = \overline{\mathcal{O}}_3$.

On the other hand, according to the table for type \( \mathrm{G}_{2}
\) on \cite[page 427]{Carter85},  
for any $N_3\in\mathcal{O}_3$, equality 
\eqref{eq:cardinalite-dimension} reduces to
\begin{align*}
\#\left\{ S \in {}^\vee K \setminus {}^\vee G / {}^\vee B : \mu([T_S])
= \mathcal{O}_3 \right\} 
\ &=\  \mathrm{rank}
\mathrm{H}_{\mathrm{top}}^{\mathrm{BM}}(\mu^{-1}(N_3), \mathbb{Z})^{\{1\}} 
\ =\ \mathrm{rank} \phi_{\mathrm{triv}}= 1.
\end{align*}
Hence, $[T_{S_8}]$ is the unique conormal bundle mapping to
$\mathcal{O}_3$ under the moment map. 

To conclude, since $S_3 \leq S_6 \leq S_7$ in the weak order
closure—that is, the partial order on ${}^{\vee}K$-orbits obtained by
considering only the solid arrows in Diagram~\ref{eq:BruhatOrbits}—it
follows from \cite[Proposition 2.6]{CNT}, together with the fact that 
$\overline{\mathcal{O}}_3 \subset \mu([T_{S}])$ for all $S \neq S_9$, that
\[
\overline{\mathcal{O}}_3 \subset \mu([T_{S_7}]) \subset \mu([T_{S_6}])
\subset \mu([T_{S_3}]) = \overline{\mathcal{O}}_{5,1}. 
\] 
The only possibilities for $\mu([T_{S_7}])$ and $\mu([T_{S_6}])$ are
$\overline{\mathcal{O}}_3, \overline{\mathcal{O}}_4$ and
$\overline{\mathcal{O}}_{5,1}$. 
However, $[T_{S_1}]$ and $[T_{S_3}]$ are the only conormal bundles
whose moment map image equals $\overline{\mathcal{O}}_{5,1}$,  and we
have just verified that $[T_{S_8}]$ is the unique conormal bundle
whose image under the moment map is $\overline{\mathcal{O}}_3$.   
Therefore
\[
\mu([T_{S_6}]) = \mu([T_{S_7}]) = \overline{\mathcal{O}}_4.
\]
We may prove that these are the only two conormal bundles whose moment
map image is equal to \( \overline{\mathcal{O}}_4 \) following the
argument we gave for $\overline{\mathcal{O}}_{3}$. 
Indeed, according to the table for type \( \mathrm{G}_2 \) on \cite[page
  427]{Carter85}, for any \( N_4 \in \mathcal{O}_4 \),  
equality~\eqref{eq:cardinalite-dimension} gives
\[
\#\left\{ S \in {}^\vee K \setminus {}^\vee G / {}^\vee B : \mu([T_S])
= \mathcal{O}_4 \right\} 
= \dim \mathrm{H}_{\mathrm{top}}^{\mathrm{BM}}(\mu^{-1}(N_4), \mathbb{Z})^{\{1\}} 
= \dim \phi_{\mathrm{triv}} = 2.
\]
By the process of elimination the conormal bundle of $S_5$ is taken to
$\overline{\mathcal{O}}_{5,2}$ by $\mu$.    
\end{proof}
Suppose $S$ is an $s_{\ell}$-horizontal ${^\vee}K$-orbit so that
(\ref{eq:horizontal}) describes the $s_{\ell}$-action on $[T_S]$.
Using Table \ref{eq:ccaction},
Theorem \ref{theo:Waction}.\ref{theo:Waction2}, equations \eqref{eq:nSiSj=1} and
\eqref{eq:nSiS9=2}, and Proposition \ref{prop:momentmapimage},
we can rule out some of the conormal bundles
from appearing on the right-hand side of (\ref{eq:horizontal}).
For example
$$s_{2}\cdot [T_{S_{8}}] = [T_{S_{8}}] + \sum_{S'} n_{S_{8},S'} \,
[T_{S'}]$$ 
where $S'$ is $s_{2}$-vertical and $\mu([T_{S'}]) \subset \mu([T_{S_{8}}]) =
\overline{\mathcal{O}}_{3}$.  It follows from
(\ref{eq:nilpotent-orbits}) that $\mu([T_{S'}])$ equals
$\overline{\mathcal{O}}_{0}$ or $\overline{\mathcal{O}}_{3}$.  By
Proposition \ref{prop:momentmapimage} the only $s_{2}$-vertical
${^\vee}K$-orbit satisfying this property is $S' = S_{9}$.  By equation
\eqref{eq:nSiS9=2} $n_{S_{8}, S_{9}} = 2$, so that
$$s_{2}\cdot [T_{S_{8}}] = [T_{S_8}]+2[T_{S_9}].$$
Arguing in this manner we obtain the following table. 

\begin{minipage}{\linewidth}
\medskip
\centering
\captionof{table}{$\left({}^{\vee}\mathrm{G}_{2}(\lambda),{}^{\vee}K(\lambda)\right)\, =\, {}(\mathrm{G}_{2},\mathrm{SL}_2\times_{\upmu_2}\mathrm{SL}_2)$}\label{eq:ccactionOnConormal}
\begin{tabular}{ | l | c | c | }
\hline 
 $i$ & $s_{1}\cdot [T_{S_i}]$ & $s_{2}\cdot [T_{S_i}]$ \\ \hline \hline		
   0 & $[T_{S_0}]+[T_{S_4}]$ & $[T_{S_0}]+[T_{S_3}]$\\	\hline
   1 & $-[T_{S_1}]$ & $[T_{S_1}]+[T_{S_3}]$ \\ \hline
   2 & $[T_{S_2}]+[T_{S_4}]$ & $-[T_{S_2}]$ \\ \hline
   3 & $n_{S_3,S_1}^{}[T_{S_1}]+[T_{S_3}]+[T_{S_6}]$ & $-[T_{S_3}]$  \\ \hline
   4 & $-[T_{S_4}]$ & $n_{S_4,S_2}^{}[T_{S_2}]+[T_{S_4}]+[T_{S_5}]$\\ \hline
   5 & $n_{S_5,S_4}^{}[T_{S_4}]+T_{S_5}+n_{S_5,S_6}^{}[T_{S_6}]+[T_{S_8}]$ & $-[T_{S_5}]$\\ \hline 
   6 & $-[T_{S_6}]$ & $[T_{S_6}]+[T_{S_7}]$\\ \hline
   7 & $n_{S_7,S_6}^{}[T_{S_6}]+[T_{S_7}]+n_{S_7,S_8}^{}[T_{S_8}]+2[T_{S_9}]$ & $-[T_{S_7}]$\\ \hline 
   8 & $-[T_{S_8}]$ & $[T_{S_8}]+2[T_{S_9}]$\\ \hline
   9 & $-[T_{S_9}]$ & $-[T_{S_9}]$\\ 
  \hline  
\end{tabular}
\bigskip 
  \end{minipage}

\section{Computing characteristic cycles for
  ${}^{\vee}\mathrm{G}_{2}$}\label{section:G2computations} 
In this section, we work within the framework of Section \ref{Korbits}
and complete the description of the characteristic cycles of the
irreducible perverse sheaves (\ref{eq:IrrObj1}).  In view of equations
(\ref{CC012}), (\ref{eq:CC3})-(\ref{eq:CC9}), it remains to treat the
cases of the 
irreducible perverse sheaves $P(\xi_i)$, associated to the parameters 
$$\xi_i=\left(S_i,\underline{\mathbb{C}}_{S_i}\right),\, i=5,6,7,8
\ \text{ and }\ \xi_i=\left(S_9,\mathcal{L}_{i}\right),\, i=10,11.$$
To simplify notation throughout this section, we denote
$$
P_i:=P(\xi_i),
$$
and continue to write
$$
[T_S]:=\overline{T_S^\ast X}\quad\text{and}\quad \chi_{S}^{}(P):=
\chi_{S}^{\mathrm{mic}}(P).
$$
The remaining characteristic cycles are given by the next theorem.
\begin{theo}\label{theo:CCs}
\begin{align*}
CC(P_{5})&= [T_{S_5}]\\
CC(P_{6})&= 2[T_{S_1}]\, +\, 
[T_{S_6}]\\
CC(P_{7})&= [T_{S_3}]\, +\, 
[T_{S_7}]\\
CC(P_8)&= [T_{S_4}]\, +\, 
[T_{S_6}]\, +\, [T_{S_8}]\\
CC(P_{10})&=[T_{S_1}]\, +\, 
[T_{S_6}]\, +\, 
[T_{S_8}]\, +\, [T_{S_9}]\\
CC(P_{11})&=[T_{S_2}]\, +\, [T_{S_7}]\, +\, [T_{S_9}].
\end{align*}
\end{theo}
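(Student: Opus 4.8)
The plan is to compute the six characteristic cycles by bootstrapping from the ones already established, namely $CC(P_i)=[T_{S_i}]$ for $i=0,1,2$ (the closed orbits, \eqref{CC012}), for $i=3,4$ (\eqref{eq:CC3}, \eqref{eq:CC4}) and for $i=9$ (\eqref{eq:CC9}), using the $W$-equivariance \eqref{eq:W-equivariant} of $CC$ together with Table~\ref{eq:ccaction} (coherent continuation) and the partially-filled Table~\ref{eq:ccactionOnConormal} ($W$-action on conormal cycles). For each remaining index $i\in\{5,6,7,8,10,11\}$ I would first constrain the shape of $CC(P_i)$: Lemma~\ref{lem:lemorbit} says the coefficient of $[T_{S_i}]$ is $1$, all other coefficients are non-negative, and only conormal bundles of orbits contained in $\overline{S_i}$ occur (these closures are read from \eqref{eq:BruhatOrbits}); Proposition~\ref{prop:CC2} together with the tau-invariants \eqref{eq:tau} forces every orbit occurring in $CC(P_i)$ to be $s_\alpha$-vertical for each $\alpha\in\tau(P_i)$, the vertical orbits being listed in \eqref{eq:verticaldescription}; and Propositions~\ref{prop:Cells-AV} and~\ref{prop:momentmapimage} say that the moment-map image of the support of $CC(P_i)$ equals the known associated variety $\mathrm{AV}(P_i)$, which excludes any $[T_{S'}]$ with $\mu([T_{S'}])\not\subset\mathrm{AV}(P_i)$. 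After these reductions each $CC(P_i)$ is a sum of $[T_{S_i}]$ and at most two or three further conormal cycles.

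Next I would solve for the unknown entries $n_{S_3,S_1},n_{S_4,S_2},n_{S_5,S_4},n_{S_5,S_6},n_{S_7,S_6},n_{S_7,S_8}$ of Table~\ref{eq:ccactionOnConormal}. Matching $CC(s_\ell\cdot P_i)=s_\ell\cdot CC(P_i)$ term by term gives, for instance, $CC(P_6)=(n_{S_3,S_1}-1)[T_{S_1}]+[T_{S_6}]$ from $s_1\cdot P_3=P_1+P_3+P_6$, and $CC(P_5)=(n_{S_4,S_2}-1)[T_{S_2}]+[T_{S_5}]$ from $s_2\cdot P_4=P_2+P_4+P_5$, expressing each remaining characteristic cycle through the $n$'s. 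To pin down the ``diagonal'' coefficients I would invoke the Springer-theoretic description of the $W$-subquotients $M(\mathcal{O}_K)$ from Section~\ref{WactionL}, via Carter's $\mathrm{G}_2$ tables: $M(\mathcal{O}_{5,1})$ is the $2$-dimensional reflection representation of $W$, in which $s_1s_2$ has order six and hence trace $2\cos(\pi/3)=1$, which forces $n_{S_3,S_1}=3$; $M(\mathcal{O}_4)$ is the other $2$-dimensional irreducible representation, in which $s_1s_2$ has order three and trace $-1$, forcing $n_{S_7,S_6}=1$; and $M(\mathcal{O}_{5,2})$, which by Springer contains the reflection representation, has character forcing $n_{S_4,S_2}+n_{S_5,S_4}=3$. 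Feeding $n_{S_3,S_1}=3$ and $n_{S_7,S_6}=1$ back into the $W$-equivariance relations already yields $CC(P_6)=2[T_{S_1}]+[T_{S_6}]$ and $CC(P_7)=[T_{S_3}]+[T_{S_7}]$, and reduces the remaining cycles to one further unknown apiece.

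The main obstacle is precisely this last layer of coefficients — $n_{S_4,S_2}$ (equivalently $\chi_{S_2}(P_5)$, equivalently the assertion $CC(P_5)=[T_{S_5}]$), and likewise $n_{S_5,S_6}$ and $n_{S_7,S_8}$ — which are genuinely \emph{not} determined by $W$-equivariance, the relations of $W$, the moment map, or the characters of the $M(\mathcal{O}_K)$: the reflection representation occurs with multiplicity two inside $\mathscr{L}(X,{}^\vee K)$, so the conormal basis is not canonical on that isotypic part, and one checks that $n_{S_4,S_2}+n_{S_5,S_4}=3$ already forces the braid relation regardless of the split. Resolving this is the point of Lemma~\ref{lem=chiS5}, whose proof exploits the finer structure of the Springer module $M(\mathcal{O}_{5,2})$ (supplemented, where that does not suffice, by the explicit Kazhdan--Lusztig--Vogan data for $\overline{S_5}$ and $\overline{S_8}$ produced by the Atlas software) to establish $\chi_{S_2}(P_5)=0$ and the analogous vanishings; concretely this gives $n_{S_4,S_2}=1$, hence $n_{S_5,S_4}=2$, and $n_{S_5,S_6}=n_{S_7,S_8}=1$.

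With all six coefficients in hand the proof concludes by reading the characteristic cycles off the $W$-equivariance identities: $CC(P_5)=[T_{S_5}]$ directly; $CC(P_8)=s_1\!\cdot\! CC(P_5)-CC(P_4)-CC(P_5)=[T_{S_4}]+[T_{S_6}]+[T_{S_8}]$ from $s_1\cdot P_5=P_4+P_5+P_8$; $CC(P_{11})=s_2\!\cdot\! CC(P_8)-CC(P_5)-CC(P_8)-CC(P_9)=[T_{S_2}]+[T_{S_7}]+[T_{S_9}]$ from $s_2\cdot P_8=P_5+P_8+P_9+P_{11}$; and $CC(P_{10})=s_1\!\cdot\! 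CC(P_7)-CC(P_6)-CC(P_7)-CC(P_9)=[T_{S_1}]+[T_{S_6}]+[T_{S_8}]+[T_{S_9}]$ from $s_1\cdot P_7=P_6+P_7+P_9+P_{10}$. As a consistency check one then fills in the rest of Table~\ref{eq:ccactionOnConormal} and verifies $s_\ell^2=\mathrm{id}$ on every conormal cycle and the braid relation on a generator, both of which hold automatically.
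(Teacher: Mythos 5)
Your overall architecture matches the paper's: the same preliminary reductions via Lemma~\ref{lem:lemorbit}, Proposition~\ref{prop:CC2} with the tau-invariants \eqref{eq:tau}, and Propositions~\ref{prop:Cells-AV} and~\ref{prop:momentmapimage}, followed by the $W$-equivariance \eqref{eq:W-equivariant} to propagate known cycles, with Lemma~\ref{lem=chiS5} reserved for the stubborn coefficient $\chi_{S_2}(P_5)$. Your derivation of $n_{S_3,S_1}=3$ and $n_{S_7,S_6}=1$ from the traces of $s_1s_2$ on the Springer quotients $M(\mathcal{O}_{5,1})$ (the reflection representation, where $s_1s_2$ has trace $1$) and $M(\mathcal{O}_4)$ (the non-faithful two-dimensional irreducible, trace $-1$) is correct and is a genuinely different, shorter route than the paper's, which instead extracts $\chi_{S_1}(P_6)=2$ and $n_{S_7,S_6}=1$ from the interlocking coefficient-matching system \eqref{eq:S11S8a}--\eqref{eq:S10S7c} and \eqref{alpha0toB7}--\eqref{alpha1toB8}. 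Your closing subtraction identities for $CC(P_8)$, $CC(P_{10})$, $CC(P_{11})$ are also the right ones.

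The gap is in your treatment of $n_{S_5,S_6}$ and $n_{S_7,S_8}$. You group them with $n_{S_4,S_2}$ as coefficients ``not determined by $W$-equivariance \dots or the characters of the $M(\mathcal{O}_K)$'' and claim they are supplied by Lemma~\ref{lem=chiS5} ``and the analogous vanishings.'' Both halves of this are wrong. Lemma~\ref{lem=chiS5} establishes only $\chi_{S_2}(P_5)=0$, equivalently $n_{S_4,S_2}=1$ and hence $n_{S_5,S_4}=2$; it says nothing about $n_{S_5,S_6}$ or $n_{S_7,S_8}$, and no analogous vanishing statement exists. In fact those two coefficients \emph{are} determined by $W$-equivariance together with Table~\ref{eq:ccaction}: matching the coefficient of $[T_{S_9}]$ in $s_2\cdot CC(P_{10})=CC(P_{10})+CC(P_7)$ gives $2\chi_{S_8}(P_{10})-1=1$, so $\chi_{S_8}(P_{10})=1$, and then matching $[T_{S_8}]$ in $s_1\cdot CC(P_7)=CC(P_6)+CC(P_7)+CC(P_9)+CC(P_{10})$ gives $n_{S_7,S_8}=1$; similarly $s_1\cdot CC(P_{11})=CC(P_{11})+CC(P_8)$ yields $\chi_{S_7}(P_{11})=1$, and matching $[T_{S_7}]$ in $s_2\cdot CC(P_8)=CC(P_5)+CC(P_8)+CC(P_9)+CC(P_{11})$ gives $n_{S_5,S_6}=\chi_{S_6}(P_8)=1$. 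Without these steps your final subtractions leave $CC(P_8)=[T_{S_4}]+n_{S_5,S_6}[T_{S_6}]+[T_{S_8}]$ and $CC(P_{10})=[T_{S_1}]+[T_{S_6}]+n_{S_7,S_8}[T_{S_8}]+[T_{S_9}]$ undetermined, and $CC(P_{11})$ inherits the ambiguity through $CC(P_8)$. The missing arguments are of exactly the kind you already carry out elsewhere, so the proof is completable, but as written it does not close.
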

The proof of this theorem will occupy the entire section and relies
heavily on the $W$-equivariance of the characteristic cycle map
\eqref{eq:W-equivariant}. Before we roll up our sleeves and delve into
the intricacies of the argument, let us appeal to some results from
the previous section to provide a finer 
description of the cycles in Theorem \ref{theo:CCs} beyond that of
Lemma \ref{lem:lemorbit}. 
These results impose additional restrictions on the conormal bundles
that may appear in the characteristic cycle of an irreducible perverse
sheaf. The first restriction arises from the relationship between the
$\tau$-invariant and the horizontal/vertical nature of the orbits, as
seen in Proposition \ref{prop:CC2}. The second restriction
depends on the structure of the Harish-Chandra cells and our
understanding of the associated varieties.

By Lemma~\ref{lem:lemorbit}, the microlocal multiplicity $\chi_S(P_i)$
can be nonzero only if $S$ is contained in the support of
$P_i$. Using an argument similar to the one leading to
equations~(\ref{eq:CC3})-(\ref{eq:CC9}), another
family of conormal bundles can be removed from $CC(P_i)$.
Indeed, Proposition~\ref{prop:CC2} implies that if $\alpha_{\ell} \in
\tau(P_{i})$  (\ref{eq:verticalP})
% $s_{\ell} P_{i}= -P_{i}$
and $S$ is $s_{\ell}$-horizontal, then 
$$
\chi_{S}(P_{i})=0.
$$
Therefore, from the description of the $\tau$-invariants
in~(\ref{eq:tau}) and of the vertical orbits
in~(\ref{eq:verticaldescription}),  
%the contrapositive of Proposition~\ref{prop:CC2} implies that
we deduce
\begin{align}\label{eq:CCP7CCP11}
%\begin{aligned}
CC(P_5)&=\chi^{}_{S_2}(P_5)[T_{S_2}]\, +\, \chi^{}_{S_3}(P_5) [T_{S_3}]\, +\, [T_{S_5}]\nonumber\\
CC(P_6)&=\chi^{}_{S_1}(P_6)[T_{S_1}]\, +\, \chi^{}_{S_4}(P_6)[T_{S_4}]\, +\, [T_{S_6}]\nonumber\\
CC(P_7)&=\chi^{}_{S_2}(P_7)[T_{S_2}]\, +\, \chi^{}_{S_3}(P_7) [T_{S_3}]\, +\, 
\chi^{}_{S_5}(P_7)[T_{S_5}]
\, +\, [T_{S_7}]\\
CC(P_8)&=\chi^{}_{S_1}(P_8)[T_{S_1}]\, +\, \chi^{}_{S_4}(P_8)[T_{S_4}]\, +\, 
\chi^{}_{S_6}(P_8)[T_{S_6}]\, +\, [T^{}_{S_8}]\nonumber\\
CC(P_{10})&=\chi^{}_{S_1}(P_{10})[T_{S_1}]\, +\, \chi^{}_{S_4}(P_{10}) 
[T_{S_4}]\,+\, 
\chi^{}_{S_6}(P_{10})[T_{S_6}]
\, +\, \chi^{}_{S_8}(P_{10}) [T_{S_8}]\, +\, [T_{S_9}]\nonumber\\
CC(P_{11})&=\chi^{}_{S_2}(P_{11})[T_{S_2}]\, +\,
\chi^{}_{S_3}(P_{11})[T_{S_3}]\, +\, \chi^{}_{S_5}(P_{11})[T_{S_5}]\,
+\, \chi^{}_{S_7}(P_{11})[T_{S_7}]+[T_{S_9}].\nonumber 
%\end{aligned}
\end{align}
These formulas can be further simplified by appealing to the structure
of the Harish-Chandra cells described in Proposition
\ref{prop:HCcells}, together with the image of the moment map given in
Proposition \ref{prop:momentmapimage}.  
More precisely,  since $\mu(\mathrm{Ch}(P_i)) = \mathrm{AV}(P_i)$ 
the conormal bundles contributing to $CC(P_i)$ must map under $\mu$ to
subsets of $\mathrm{AV}(P_i)$.   There are two simplifying conclusions
to be made from this observation.

The first conclusion is a result of the discussion surrounding
(\ref{eq:nilpotent-orbits}) and reads as follows.
% Any conormal bundle whose moment
%map image is the closure of a nilpotent orbit whose
%${}^{\vee}\mathrm{G}_{2}$-saturation is a special nilpotent orbit
%distinct from $\mathrm{AV}(P_i)$ cannot appear in $CC(P_i)$. 
If $\mu([T_{S}]) = \overline{\mathcal{O}}$ and
${}^{\vee}\mathrm{G}_{2} \cdot \mathcal{O}$ is special with
${}^{\vee}\mathrm{G}_{2} \cdot \mathcal{O} \neq   
{}^{\vee}\mathrm{G}_{2} \cdot \mathrm{AV}(P_{i})$ then $\chi_{S}(P_{i}) =
0$.
For example, by Proposition~\ref{prop:momentmapimage}, $\mu([T_{S_5}])
= \overline{\mathcal{O}}_{5,2}$ and ${^\vee}\mathrm{G}_{2}\cdot
\mathcal{O}_{5,2} = \mathcal{O}_{5,{^\vee}\mathrm{G}_{2}}$ is special. Therefore, by
Proposition~\ref{prop:Cells-AV}, the conormal bundle
$\chi_{S_{5}}(P_{i}) \neq 0$  only if $P_i \in \mathrm{cell}_2$. 
The same holds for $[T_{S_2}]$ and $[T_{S_4}]$, the other two conormal
bundles whose image under the moment map is also
$\overline{\mathcal{O}}_{5,2}$. 
In other words, for any $P_i \notin \mathrm{cell}_2$, we have 
$$
0=\chi_{S_2}(P_i)=\chi_{S_4}(P_i)=\chi_{S_5}(P_i).
$$
The same reasoning applies to any conormal bundle whose image under
the moment map is $\overline{\mathcal{O}}_{5,1}$. Therefore, the
formulas in (\ref{eq:CCP7CCP11}) simplify to 
\begin{align}\label{eq:CCP5CCP11V2}
\begin{aligned}
CC(P_5)&=\chi^{}_{S_2}(P_5)[T_{S_2}]\, +\, 
[T_{S_5}]\\
CC(P_6)&=\chi^{}_{S_1}(P_6)[T_{S_1}]\, +\, 
[T_{S_6}]\\
CC(P_7)&= \chi^{}_{S_3}(P_7) [T_{S_3}]
\, +\, [T_{S_7}]\\
CC(P_8)&= \chi^{}_{S_4}(P_8)[T_{S_4}]\, +\, 
\chi^{}_{S_6}(P_8)[T_{S_6}]\, +\, [T^{}_{S_8}]\\
CC(P_{10})&=\chi^{}_{S_1}(P_{10})[T_{S_1}]\,+\, 
\chi^{}_{S_6}(P_{10})[T_{S_6}]
\, +\, \chi^{}_{S_8}(P_{10}) [T_{S_8}]\, +\, [T_{S_9}]\\
CC(P_{11})&=\chi^{}_{S_2}(P_{11})[T_{S_2}]\, +\,
\chi^{}_{S_5}(P_{11})[T_{S_5}]\, +\,
\chi^{}_{S_7}(P_{11})[T_{S_7}]+[T_{S_9}]. 
\end{aligned}
\end{align}

The second conclusion to be drawn from the identity $\mu(\mathrm{Ch}(P_i)) =
\mathrm{AV}(P_i)$  is that if $\mathrm{AV}(P_{i}) = \overline{\mathcal{O}}$ then
at least one $[T_{S}]$ appearing in $CC(P_{i})$ must satisfy
$\mu([T_{S}]) = \mathcal{O}$.  Applying this property to
(\ref{eq:CCP5CCP11V2}) with the aid of
Propositions~\ref{prop:Cells-AV} and~\ref{prop:momentmapimage}, we
conclude that the microlocal multiplicities 
\begin{align}\label{eq:nonzeromicro} 
\chi_{S_1}(P_6),\quad \chi_{S_3}(P_7),\quad \chi_{S_4}(P_8),\quad 
\chi_{S_1}(P_{10}),
\end{align}
are all nonzero, and that at least one of two values
\[
\chi_{S_2}(P_{11}) \quad \text{and} \quad \chi_{S_2}(P_{5})
\]
is nonzero.

To compute the remaining coefficients, we use the $W$-equivariance of
the characteristic cycle map $CC(\cdot)$, as described in
(\ref{eq:W-equivariant}), to establish relationships among the
microlocal multiplicities. We first consider $CC(P_i)$ for $i = 7, 8,
10,$ and $11$, and postpone the computation of $CC(P_5)$ and $CC(P_6)$
to the end of this section. 

Let us start by looking at row 11 of Table \ref{eq:ccaction}. 
%Thus and from 
Equality (\ref{eq:W-equivariant}) implies
\begin{align}\label{eq:s2onP11}
s_{1}\cdot CC(P_{11})=
CC(s_{1}\cdot P_{11})
=CC(P_{11})\, +\, CC(P_8).
\end{align}
By Equality~(\ref{eq:CCP5CCP11V2}) and Table~\ref{eq:ccactionOnConormal}, the left-hand side of the previous equation is 
equal to the sum of the right-hand sides of the following three equations
\begin{align*}
  \chi_{S_2}^{\mathrm{}}(P_{11})(s_{1}\cdot[T_{S_2}])&=\mathbf{\chi_{S_2}^{}(P_{11})[T_{S_2}]}\, +\,  \chi_{S_2}^{}(P_{11})[T_{S_4}]\nonumber\\
\chi_{S_5}^{}(P_{11})(s_{1}\cdot[T_{S_5}])\, &=\, \mathbf{\chi_{S_5}^{}(P_{11})[T_{S_5}]}\, +\,  \chi_{S_5}^{}(P_{11}) \left(n_{S_5,S_4}[T_{S_4}]\, +\, \nonumber
n_{S_5,S_6}[T_{S_6}]\, +\, [T_{S_8}]\right)\\%\label{eq:nS6S5}\\
 \chi_{S_7}^{}(P_{11})(s_{1}\cdot [T_{S_7}])\, &=\,  \mathbf{\chi_{S_7}^{}(P_{11})
[T_{S_7}]}\, +\,  \chi_{S_7}^{}(P_{11})\left(n_{S_7,S_6}[T_{S_6}]\, +\, n_{S_7,S_8} [T_{S_8}]\right)\\ 
&\qquad\qquad\qquad\qquad\qquad\qquad\qquad\qquad\qquad +\, \mathbf{2\cdot\chi_{S_7}^{}(P_{11}) [T_{S_9}]}\\
s_{1}\cdot [T_{S_9}]\, &=\, \mathbf{-[T_{S_9}]}.\nonumber
\end{align*}
In bold, we have written the constituents of $CC(P_{11})$, which
cancel with the right-hand side of \eqref{eq:s2onP11}. 
Comparing with the right-hand side of (\ref{eq:s2onP11}), we conclude 
\begin{align}
\chi^{}_{S_4}(P_{8})&=\chi^{}_{S_2}(P_{11})
\, +\, \chi^{}_{S_5}(P_{11})n_{S_5,S_4}\label{eq:S11S8a} \\
\chi^{}_{S_6}(P_{8})&=
\chi^{}_{S_5}(P_{11}) n_{S_5,S_6}
\, +\, 
\chi^{}_{S_7}(P_{11})n_{S_7,S_6} \label{eq:S11S8b}\\
1&=\chi^{}_{S_5}(P_{11})%n_{S_5,S_8}
\,  +\,   \chi^{}_{S_7}(P_{11})n_{S_7,S_8}\label{eq:S11S8c} \\
1&=2\cdot \chi^{}_{S_7}(P_{11})-1.\label{eq:S11S8d} 
\end{align}
Since by Table \ref{eq:ccaction} row 11,
the action of $s_{2}$ 
on $P_{10}$ is given by $P_{10}+P_{7}$, 
the exact same argument to the one 
leading to equalities (\ref{eq:S11S8a})-(\ref{eq:S11S8d}),
yields the following list of equations
\begin{align}
\chi^{}_{S_3}(P_7)\, &=\, 
\chi^{}_{S_1}(P_{10})\label{eq:S10S7a}\\
1&=\chi^{}_{S_6}(P_{10})\label{eq:S10S7b}\\
1&=2\cdot \chi^{}_{S_8}(P_{10}) -1.\label{eq:S10S7c}
\end{align}
Note that four equations appear in
  (\ref{eq:S11S8a})- (\ref{eq:S11S8d}), and  three equations appear in
(\ref{eq:S10S7a})-(\ref{eq:S10S7c}).  The fewer and simpler
  equations in the latter case are due to the image of the moment map
  and its relationship with the $W$-action on 
the conormal bundles, as described in
Table~\ref{eq:ccactionOnConormal}. For example, since 
\[
\mu([T_{S_8}]) = \overline{\mathcal{O}}_3 \subset
\overline{\mathcal{O}}_4 = \mu([T_{S_7}]), 
\]
it follows from~\eqref{eq:horizontal} that the conormal bundle
$[T_{S_7}]$ does not appear in $s_{1} \cdot [T_{S_8}]$, while 
$[T_{S_8}]$ may appear in $s_{2} \cdot [T_{S_7}]$. This results in the
asymmetry between Equations~\eqref{eq:S11S8c} and~\eqref{eq:S10S7b}. 
%Therefore, $n_{S_8, S_7} = 0$, and the product $\chi_{S_8}^{}(P_{10})
%n_{S_8, S_7}$ disappears from Equation~\eqref{eq:S10S7b}. 

Looking at equalities (\ref{eq:S11S8d}) and (\ref{eq:S10S7c}), we deduce that
\begin{equation}\label{eq:S7P11S8P10}
\chi^{}_{S_7}(P_{11})=\chi^{}_{S_8}(P_{10})=1.
\end{equation}
Above, we applied $s_{2}$ to $P_{10}$ and $s_{1}$ to $P_{11}$. 
We now apply 
$s_{1}$ to $P_7$, and $s_{2}$ to $P_8$.
Once again, using the $W$-equivariance (\ref{eq:W-equivariant}) and Table
(\ref{eq:ccaction}), we write 
\begin{align}
s_{1}\cdot CC(P_7)\ &=\ CC(P_6)\, +\, CC(P_7)\, +\, CC(P_9)\, +\, CC(P_{10}),
\label{alpha0toB7}
\\
s_{2}\cdot CC(P_8)\ &=\ CC(P_5)\, +\, CC(P_8)\, +\, CC(P_9)\, +\, CC(P_{11}).
\label{alpha1toB8}
\end{align}
According to \eqref{eq:CCP5CCP11V2}, the left-hand side of
(\ref{alpha0toB7})  contains a  multiple of 
$s_{1} \cdot [T_{S_3}]$.  We can say more about this term.  By
\eqref{eq:CC3}, we have $CC(P_3)=[T_{S_3}]$.  The $W$-equivariance
(\ref{eq:W-equivariant}) combined with row three of Table
\ref{eq:ccaction} therefore implies 
\begin{align*}
s_{1}\cdot[T_{S_3}]= s_{1}\cdot CC(P_3)&=CC(s_1\cdot P_3)\\
&=CC(P_3)\, +\, CC(P_1)\, +\, CC(P_6)\\
&=[T_{S_3}]\, +\, [T_{S_1}]\, +\,  \left(\chi_{S_1}(P_6)[T_{S_1}]+[T_{S_6}]\right).
\end{align*} 
The same argument with $P_4$ and $s_{2}$ in place of 
$P_3$ and $s_{1}$, can be used to obtain
\begin{align}\label{eq:TS4}
s_{2}\cdot[T_{S_4}]=[T_{S_4}]\, +\,  [T_{S_2}]\, +\, 
\left(\chi_{S_2}^{}(P_{5})[T_{S_2}]\, +\, [T_{S_5}]\right).
\end{align}
Consequently, by \eqref{eq:CCP5CCP11V2} and
Table~\ref{eq:ccactionOnConormal}, the left-hand side of
(\ref{alpha0toB7}) is the sum of the  
right-hand sides of the following two equations 
\begin{align}
\chi_{S_3}^{}(P_7)(s_{1}\cdot [T_{S_3}])\, &=\,
\mathbf{\chi_{S_3}^{}(P_7)[T_{S_3}]}+  
 \chi_{S_3}^{}(P_7)[T_{S_1}]\, +\nonumber\\
 &\qquad\qquad\qquad\qquad
 \chi_{S_3}^{}(P_{7})\left(\chi_{S_1}^{}(P_{6})[T_{S_1}]\, +\,
     [T_{S_6}]\right)\label{eq:alpha2S3}\\ 
s_{1}\cdot [T_{S_7}]\, &=\, \mathbf{[T_{S_7}]}\, +\, 
n_{S_7,S_6}  [T_{S_6}]\, +\, n_{S_7,S_8} [T_{S_8}]\, +\, 2 [T_{S_9}]. \label{eq:nS7S8}
\end{align}
The summands of $CC(P_7)$ are indicated in bold.

Similarly, the left-hand side of (\ref{alpha1toB8}) is 
equal to the sum of the 
right-hand sides of the following list of equations
\begin{align}
 \chi_{S_4}^{}(P_8)(s_{2}\cdot [T_{S_4}])\, &=\,
 \mathbf{\chi_{S_4}^{}(P_8)[T_{S_4}]}+  
 \chi_{S_4}^{}(P_8)[T_{S_2}]\, +\, \nonumber\\
 &\qquad\qquad\qquad\qquad
 \chi_{S_4}^{}(P_{8})\left(\chi_{S_2}^{}(P_{5})[T_{S_2}]\, +\,
     [T_{S_5}]\right)\label{eq:alpha1S4}\\ 
 \chi_{S_6}^{}(P_{8})(s_{2}\cdot[T_{S_6}])\, &=\,
 \mathbf{\chi_{S_6}^{}(P_{8})[T_{S_6}]}\, +\,   
\chi_{S_6}^{}(P_{8}) [T_{S_7}]\label{eq:nS6S5}\\
\nonumber s_{2}\cdot [T_{S_8}]\, &=\,  
 \mathbf{[T_{S_8}]}%\, +\,  n_{S_8,S_7} [T_{S_7}]
 \, +\, 2 [T_{S_9}]. 
%\label{eq:alpha1S8}
\end{align}
The summands of $CC(P_8)$ are indicated in bold.

Now, by \eqref{eq:CC9}, \eqref{eq:CCP5CCP11V2} and
\eqref{eq:S7P11S8P10}, $[T_{S_8}]$ appears 
on the right-hand-side of (\ref{alpha0toB7}) with
multiplicity given by $\chi^{}_{S_8}(P_{10})=1$. Meanwhile, from
\eqref{eq:alpha2S3} and \eqref{eq:nS7S8}, the conormal bundle $[T_{S_8}]$
appears on the left-hand side of \eqref{alpha0toB7} with multiplicity  
\(n_{S_7,S_8}\). Hence,
$$
n_{S_7,S_8}=1,
$$
and then from  (\ref{eq:S11S8c}) and \eqref{eq:S7P11S8P10}, that
\begin{equation} \label{eq:S5P11}
\chi_{S_5}^{}(P_{11})=0.
\end{equation}
To finish with equation  \eqref{alpha0toB7}, we notice that:
\begin{enumerate}
\item $[T_{S_1}]$ appears on the right-hand-side of
  (\ref{alpha0toB7}), multiplied by $\chi_{S_1}^{}(P_{10})\, +\,
  {\chi_{S_1}^{}(P_{6})}$. Hence, by (\ref{eq:alpha2S3}) and 
  (\ref{eq:S10S7a}), we have 
$$
\chi^{}_{S_3}(P_7)(1+\chi^{}_{S_1}(P_6))
=\chi_{S_1}^{}(P_{10})\, +\, {\chi_{S_1}^{}(P_{6})}\quad\Longrightarrow\quad
\chi^{}_{S_3}(P_7)\chi^{}_{S_1}(P_6)
=\chi_{S_1}^{}(P_{6}).$$  
By (\ref{eq:nonzeromicro}) the microlocal multiplicity
$\chi^{}_{S_1}(P_6)$ is nonzero and so
$1 =\chi_{S_3}^{}(P_{7})=\chi_{S_1}^{}(P_{10})$. 
\item $[T_{S_6}]$ appears on the right-hand-side of
  (\ref{alpha0toB7}), multiplied by $1\, +\,
  {\chi_{S_6}^{}(P_{10})}$. Hence, by (\ref{eq:S10S7b}),  
(\ref{eq:alpha2S3}), (\ref{eq:nS7S8}) and the previous point, we have
\begin{align*} 
\chi^{}_{S_3}(P_7)\, +\, n_{S_7,S_6}&=1+\chi_{S_6}^{}(P_{10})=2
\quad\Longrightarrow\quad n_{S_7,S_6}=1,
\end{align*}
\end{enumerate}
Applying the same reasoning to equation (\ref{alpha1toB8}), we deduce
the following: 
\begin{enumerate}
\item By  (\ref{eq:S5P11}), the conormal bundle $[T_{S_5}]$ appears
  only once on the right-hand side of
  (\ref{alpha1toB8}). Consequently, equation (\ref{eq:alpha1S4}) tells
  us that
$$\chi^{}_{S_4}(P_8)=1,$$
and the from  (\ref{eq:S11S8a}) and \eqref{eq:S5P11}, we obtain
$\chi^{}_{S_2}(P_{11})=1.$ 
\item By (\ref{eq:S7P11S8P10}), the conormal bundle $[T_{S_7}]$
  appears only once on  the right-hand-side of (\ref{alpha1toB8}).
By  (\ref{eq:nS6S5}), we therefore obtain
$$\chi_{S_6}(P_8)=1.$$
\end{enumerate}
Putting all the above equations together, we conclude that:
\begin{align*}
%\begin{aligned}\label{eq:CC7-11}
CC(P_{7})&= [T_{S_3}]\, +\, 
[T_{S_7}]\\
CC(P_8)&= [T_{S_4}]\, +\, 
[T_{S_6}]\, +\, [T_{S_8}]\\
CC(P_{10})&=[T_{S_1}]\, +\, 
[T_{S_6}]\, +\, 
[T_{S_8}]\, +\, [T_{S_9}]\\
CC(P_{11})&=[T_{S_2}]\, +\, [T_{S_7}]\, +\, [T_{S_9}].
%\end{aligned}
\end{align*}

To complete our computations for Theorem \ref{theo:CCs}, it remains to
determine $CC(P_5)$ and $CC(P_6)$. Recall from (\ref{eq:CCP5CCP11V2}) that 
\begin{align*}
CC(P_5)&=\chi^{}_{S_2}(P_5)[T_{S_2}]\, +\, 
[T_{S_5}],\\
CC(P_6)&=\chi^{}_{S_1}(P_6)[T_{S_1}]\, +\, 
[T_{S_6}].
\end{align*}
Using the $W$-equivariance (\ref{eq:W-equivariant})
and tables (\ref{eq:ccaction}) and (\ref{eq:ccactionOnConormal}),  
we can write
\begin{align*}
CC(P_6)\, +\, CC(P_3)\, +\, CC(P_7)\ 
&=\ s_{2}\cdot CC(P_6)\\ 
&=\chi_{S_1}(P_6)([T_{S_1}]\, +\, [T_{S_3}])\, + \,[T_{S_6}]\, +\,
  [T_{S_7}]\\
&=\ CC(P_6) \, +\, CC(P_3) \, +\,
 \left(\chi_{S_1}(P_6)-1\right) [T_{S_3}] \, +\,  [T_{S_7}]\\
CC(P_5)\, +\, CC(P_4)\, +\, CC(P_8)\ 
&=\ s_{1}\cdot CC(P_5)\\ 
&=\chi_{S_2}(P_5)([T_{S_2}]\, +\, [T_{S_4}])\, + \,
n_{S_5,S_4}[T_{S_4}]\\
&\qquad
 \, +\, [T_{S_5}]\, +\, n_{S_5,S_6}[T_{S_6}]\ +\ 
  [T_{S_8}]\\
&=\ CC(P_5) \, +\, CC(P_4) \, +\,
  \left(\chi_{S_2}(P_5)+n_{S_5,S_4}-1\right) [T_{S_4}]\\ 
 & \qquad \, +\, 
 n_{S_5,S_6}[T_{S_6}] 
 \, +\,  [T_{S_8}].
\end{align*}
Since  $\chi_{S_3}(P_7)=\chi_{S_4}(P_8)=\chi_{S_6}(P_8)=1$, we conclude
\begin{align*}
\chi_{S_1}(P_6)-1&=\chi_{S_3}(P_7)=1\quad\Longrightarrow\quad \chi_{S_1}(P_6)=2\\
\chi_{S_2}(P_5)+n_{S_5,S_4}-1&=\chi_{S_4}(P_8)=1\quad\Longrightarrow\quad \chi_{S_2}(P_5)+n_{S_5,S_4}=2\\
n_{S_5,S_6}&=\chi_{S_6}(P_8)=1.
\end{align*}
Therefore
\begin{align}\label{eq:TS4TS5}
\begin{aligned}
s_{2}\cdot [T_{S_4}]&=
(1+\chi_{S_2}(P_5))[T_{S_2}]+[T_{S_4}]
+[T_{S_5}]\\
s_{1}\cdot [T_{S_5}]&=
(2-\chi_{S_2}(P_5))[T_{S_4}]
+[T_{S_5}]+[T_{S_6}] 
 +  [T_{S_8}],
 \end{aligned}
\end{align}
where the first equation is simply a rearrangement of~\eqref{eq:TS4}, and
\begin{align}\label{eq:CCP5CCP6}
CC(P_5)&=\chi_{S_{2}}^{}(P_5)
[T_{S_2}]\ +\, [T_{S_5}],\ \text{ where }\ \chi_{S_{2}}^{}(P_5)\leq 2,\\
CC(P_6)&=
2[T_{S_1}]\, +\, [T_{S_6}].\nonumber
\end{align}
In the following Lemma, we prove that $\chi_{S_{2}}^{}(P_5)=0$, which
in turn implies that $n_{S_5,S_4}=2$. 
This lemma concludes the proof of Theorem \ref{theo:CCs}. 
\begin{lem}\label{lem=chiS5}
We have
$$
\chi_{S_{2}}^{}(P_5)=0.
$$
\end{lem}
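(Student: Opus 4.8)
The plan is to read off the value of $c:=\chi^{}_{S_2}(P_5)$ from the $W$-module structure on the subquotient $M(\mathcal{O}_{5,2})$ of $\mathscr{L}(X,{^\vee}K)$ introduced in \eqref{eq:Z-module-quotient}, exactly as the analogous bookkeeping was carried out for $\mathcal{O}_{5,1}$ inside the proof of Proposition~\ref{prop:momentmapimage}. First I would set up $M(\mathcal{O}_{5,2})$: by Proposition~\ref{prop:momentmapimage} the conormal bundles with $\mu$‑image equal to $\overline{\mathcal{O}}_{5,2}$ are $[T_{S_2}],[T_{S_4}],[T_{S_5}]$, and those with strictly smaller image are $[T_{S_6}],[T_{S_7}],[T_{S_8}],[T_{S_9}]$; hence $M(\mathcal{O}_{5,2})$ is free of rank three on the images $\overline{[T_{S_2}]},\overline{[T_{S_4}]},\overline{[T_{S_5}]}$, with (from Table~\ref{eq:ccactionOnConormal}, together with $n_{S_4,S_2}=1+c$, $n_{S_5,S_4}=2-c$ from \eqref{eq:TS4TS5} and the vanishing of $\overline{[T_{S_6}]},\overline{[T_{S_8}]}$ in the quotient) the action $s_2\,\overline{[T_{S_4}]}=(1+c)\overline{[T_{S_2}]}+\overline{[T_{S_4}]}+\overline{[T_{S_5}]}$, $s_1\,\overline{[T_{S_5}]}=(2-c)\overline{[T_{S_4}]}+\overline{[T_{S_5}]}$, and $s_1\,\overline{[T_{S_2}]}=\overline{[T_{S_2}]}+\overline{[T_{S_4}]}$, $s_2\,\overline{[T_{S_2}]}=-\overline{[T_{S_2}]}$, $s_1\,\overline{[T_{S_4}]}=-\overline{[T_{S_4}]}$, $s_2\,\overline{[T_{S_5}]}=-\overline{[T_{S_5}]}$.

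Next I would identify this module. By \eqref{eq:cardinalite-dimension} and \cite[Theorem~2.9.1]{Trapa2005}, $M(\mathcal{O}_{5,2})\cong H^{\mathrm{BM}}_{\mathrm{top}}(\mu^{-1}(N_{5,2}),\mathbb{Z})^{A_{{^\vee}K}(N_{5,2})}$, and since the rank count forces this to be three while $A_{{^\vee}K}(N_{5,2})=\mathbb{Z}_2$, the image of $\mathbb{Z}_2$ in $A_{{^\vee}\mathrm{G}_2}(N_{5,2})=S_3$ is trivial and the module is all of $H^{\mathrm{BM}}_{\mathrm{top}}(\mu^{-1}(N_{5,2}),\mathbb{Z})$. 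Reading the Springer correspondence for ${^\vee}\mathrm{G}_2$ off the table on \cite[page~427]{Carter85} (only the trivial and sign local systems on the subregular orbit $\mathcal{O}_{5,{^\vee}\mathrm{G}_2}$ are of Springer type, the $2$-dimensional one being cuspidal), this identifies $M(\mathcal{O}_{5,2})$, as a $W$-module, with the direct sum of the two-dimensional reflection representation and the one-dimensional representation that is trivial on $s_1$ and the sign on $s_2$. Comparing with the matrices above and invoking the explicit structure of the Springer action for ${^\vee}\mathrm{G}_2$ should then pin $n_{S_4,S_2}$, and hence $c$.

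The step I expect to be the real obstacle is exactly this last comparison. Unlike the irreducible module $M(\mathcal{O}_{5,1})$ — where the analogous coefficient ($n_{S_3,S_1}=3$) is forced merely by requiring $s_1s_2$ to have the correct order — the rank-three module here is decomposable, and in fact the displayed matrices satisfy $s_i^2=\mathrm{id}$ and $(s_1s_2)^6=\mathrm{id}$ with a character that is \emph{independent of $c$} for every integer $c$, so no purely $\mathbb{Q}[W]$-level invariant can detect $c$. The fix is to use the identification integrally, i.e.\ geometrically: under Trapa's isomorphism the conormal basis of $M(\mathcal{O}_{5,2})$ matches the basis of $H^{\mathrm{BM}}_{\mathrm{top}}(\mu^{-1}(N_{5,2}),\mathbb{Z})$ indexed by the irreducible components of the (one-dimensional) Springer fibre, on which the $W$-action has definite integer structure constants, and these force the coefficient of $\overline{[T_{S_2}]}$ in $s_2\,\overline{[T_{S_4}]}$ to be $1$, i.e.\ $1+c=1$. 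As an independent check — and perhaps a cleaner route altogether — one can verify with the Atlas software that the interval $[S_2,S_5]$ is the chain $S_2<S_4<S_5$ and that the orbit closure $\overline{S_5}$ is smooth along the two-dimensional closed orbit $S_2$, whence $P_5=\mathrm{IC}(\overline{S_5},\underline{\mathbb{C}})$ is a shifted constant sheaf in a neighbourhood of $S_2$ and $\overline{T^{\ast}_{S_2}X}$ cannot appear in $CC(P_5)$. Either way one concludes $\chi^{}_{S_2}(P_5)=0$, and then \eqref{eq:CCP5CCP6} and \eqref{eq:TS4TS5} give $n_{S_5,S_4}=2$, finishing the proof of Theorem~\ref{theo:CCs}.
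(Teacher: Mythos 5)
You set up exactly the same framework as the paper (the quotient $M(\mathcal{O}_{5,2})$, Trapa's isomorphism \eqref{eq:Springer-representation2}, the Springer data from Carter, and the $c$-dependent action \eqref{eq:sigma-action}), and your observation that the $\mathbb{Q}[W]$-isomorphism class of this rank-three module is independent of $c$ — so no character-level argument can detect $c$ — is correct and is precisely why the argument must be integral. But at the decisive step your proposal has a genuine gap: you assert that under Trapa's isomorphism the conormal classes ``match'' a component basis of the Springer fibre on which the $W$-action has known integer structure constants forcing $1+c=1$. No such structure constants are computed or cited, and computing them is essentially equivalent to the problem at hand (indeed the invariant lattice is spanned by $A_{{}^\vee K}(N)$-orbit sums of components, and whether a conormal class equals such a sum with coefficient one is exactly the integral information in question); so this step begs the question. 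The paper resolves it differently: from \eqref{eq:isoV1} the lattice must split over $\mathbb{Z}$ as $\phi_{\psi_{(3)}}\oplus\phi_{\psi_{(1,2)}}$, the vector $v=(c-2)q([T_{S_2}])+q([T_{S_5}])$ spans the unique saturated copy of $\phi_{\psi_{(1,2)}}$, and for $c=1,2$ one checks by hand that no $W$-stable integral complement of $\langle v\rangle$ exists, leaving $c=0$. Your fallback route (smoothness of $\overline{S_5}$ near $S_2$ forces $CC(P_5)=[T_{S_5}]$) is a correct implication, but it is not established either: the Atlas software computes KLV polynomials and hence only rational smoothness/local multiplicities, which do not control characteristic cycle multiplicities (that is the content of the local Euler obstructions); the paper itself only asserts smoothness of $\overline{S_5}$ later, via a Magma computation whose details are omitted.

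Two smaller inaccuracies: the image of $A_{{}^\vee K}(N_{5,2})\cong\mathbb{Z}_2$ in $A_{{}^\vee\mathrm{G}_2}(N_{5,2})\cong\mathcal{S}_3$ is \emph{not} trivial — if it were, the invariant lattice would have rank $4$ (the full top homology $\phi_{\psi_{(3)}}\oplus\phi_{\psi_{(1,2)}}^{\oplus 2}$), contradicting the rank-three count from Proposition \ref{prop:momentmapimage}; the correct computation, as in the paper, restricts $\psi_{(1,2)}$ to the order-two subgroup and yields $M(\mathcal{O}_{5,2})\cong\phi_{\psi_{(3)}}\oplus\phi_{\psi_{(1,2)}}$ of rank three (your stated conclusion is right, but your reasoning for it is internally inconsistent). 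Also, your parenthetical claim that in the $\mathcal{O}_{5,1}$ case the coefficient $n_{S_3,S_1}=3$ is forced by the order of $s_1s_2$ is not how that value is obtained (it comes from $\chi_{S_1}(P_6)=2$ via the $W$-equivariance of $CC$), though this is incidental to the lemma.
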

\begin{proof}
We use the theory presented in Section \ref{WactionL}.  Let $N_{5,2}
\in \mathcal{O}_{5,2}$. From 
% be the element denoted by $f_3$ in \cite[Table~1]{Samples}.  
column three of \cite[Table~1]{Samples}, we have
\[
A_K(N_{5,2}) \cong \mathbb{Z}_2
\]
and equation~\eqref{eq:Springer-representation2} tells us that
\begin{equation}\label{eq:M52}
M(\mathcal{O}_{5,2}) \, \cong\,
\mathrm{H}_{\mathrm{top}}^{\mathrm{BM}}(\mu^{-1}(N_{5,2}), \mathbb{Z})^{\mathbb{Z}_2}. 
\end{equation}
By the table for type \( \mathrm{G}_{2} \) on
\cite[page 427]{Carter85}, we obtain $$A_K(N_{5,2}) \subset
A_G(N_{5,2}) \cong  \mathcal{S}_3,$$
the symmetric group of order six.   
The irreducible \(\mathbb{Z}[
\mathcal{S}_3] \)-modules are indexed by the partitions of 3:  
\[
\psi_{(3)} = \mathrm{triv}, \quad 
\psi_{(1,1,1)} = \mathrm{sgn}, \quad 
\psi_{(1,2)},
\]
where \( \mathrm{triv} \) and \( \mathrm{sgn} \) denote the trivial
and sign modules, respectively, and \( \psi_{(1,2)} \) is the
unique two-dimensional irreducible module. 
According to the table for type \( \mathrm{G}_{2} \) on \cite[page
  427]{Carter85}, the Springer correspondence
(\ref{eq:Springer-representation}) assigns to the pair \( 
\left( \mu^{-1}(N_{5,2}), \psi_{(3)} \right) \) a two-dimensional
irreducible character \( \phi_{\psi_{(3)}} \) of \( W \), and to \(
\left( \mu^{-1}(N_{5,2}), \psi_{(1,2)} \right) \) a one-dimensional
irreducible character \( \phi_{\psi_{(1,2)}} \) of \( W \). The pair
\( \left( \mu^{-1}(N_{5,2}), \psi_{(1,1,1)} \right) \), however, is
not associated to any irreducible character of \( W \) via the
Springer correspondence. In summary,  the top Borel-Moore homology  
decomposes as the direct sum
$$
\mathrm{H}_{\mathrm{top}}^{\mathrm{BM}}(\mu^{-1}(N_{5,2}), \mathbb{Z})=
\left( \phi_{\psi_{(3)}}\otimes {\psi_{(3)}} \right) \oplus \left(
\phi_{\psi_{(1,2)}}\otimes {\psi_{(1,2)}} \right).
$$
Moreover, since the restriction
$\left.\psi_{(1,2)}\right|_{\mathbb{Z}_2}^{}=\mathrm{triv}\oplus\mathrm{sgn}$, 
by \eqref{eq:M52}, we have 
\begin{align}\label{eq:isoV1}
M(\mathcal{O}_{5,2})\, &\cong\,  \left( \left( 
\phi_{\psi_{(3)}}\otimes {\psi_{(3)}} \right) \oplus \left( \phi_{\psi_{(1,2)}}\otimes
    {\psi_{(1,2)}} \right) \right)^{\mathbb{Z}_2}\nonumber\\ 
\ & \cong \ \phi_{\psi_{(3)}}\oplus \phi_{\psi_{(1,2)}}
\end{align}
as $\mathbb{Z}(W)$-modules.  
Consider the projection \( q \) from the \(\mathbb{Z}\)-module
\eqref{eq:Z-module}  to the quotient \eqref{eq:Z-module-quotient}, and
let \( \sigma \) denote the \(\mathbb{Z}[ W] \)-module 
on~\eqref{eq:Z-module-quotient} induced by the \( W \)-action
on~\eqref{eq:Z-module}.   
By Proposition~\ref{prop:momentmapimage}, we have
\[
\mu([T_{S_2}]) = \mu([T_{S_4}]) = \mu([T_{S_5}]) = \overline{\mathcal{O}}_{5,2},
\]
and so  from~\eqref{eq:isoV1} we deduce that
\begin{align}\label{eq:isoV2}
\left( \langle q([T_{S_2}]), q([T_{S_4}]), q([T_{S_5}]) \rangle, \sigma \right)
\cong \phi_{\psi_{(3)}} \oplus \phi_{\psi_{(1,2)}}.
\end{align}
Additionally,  Table \ref{eq:ccactionOnConormal} and \eqref{eq:TS4TS5} imply
\begin{align}
\begin{aligned}\label{eq:sigma-action}
\sigma(s_{1})\cdot q([T_{S_2}])&=q([T_{S_{2}}])+q([T_{S_{4}}])\\
\sigma(s_{2})\cdot q([T_{S_2}])&=-q([T_{S_{2}}])\\
\sigma(s_{1})\cdot q([T_{S_4}])&=-q([T_{S_4}]) \\
\sigma(s_{2})\cdot q([T_{S_4}])&=
(1+\chi_{S_2}(P_5))q([T_{S_2}])+q([T_{S_4}])
+q([T_{S_5}])\\
\sigma(s_{1})\cdot q([T_{S_5}])&=
(2-\chi_{S_2}(P_5))q([T_{S_4}])+q([T_{S_5}])\\
\sigma(s_{2})\cdot q([T_{S_5}])&=-q([T_{S_5}]). 
\end{aligned}
\end{align}
Now, by~\eqref{eq:isoV2}, there exists a $\mathbb{Z}$-linear combination with
of  
\( q([T_{S_2}]) \), \( q([T_{S_4}]) \), and \( q([T_{S_5}]) \) that
spans a one-dimensional  
\( W \)-invariant subspace isomorphic to \( \phi_{\psi_{(1,2)}} \). 
Examining the action defined in~\eqref{eq:sigma-action}, it is
straightforward to verify that 
\begin{align*}
\sigma(s_{1}) \cdot \left( (\chi_{S_2}(P_5)-2)\, q([T_{S_2}]) +
q([T_{S_5}]) \right)  
&= \left( (2 - \chi_{S_2}(P_5))\, q([T_{S_2}]) + q([T_{S_5}]) \right),\\
\sigma(s_{2}) \cdot \left( (\chi_{S_2}(P_5)-2)\, q([T_{S_2}]) +
q([T_{S_5}]) \right)  
&= -\left( (2 - \chi_{S_2}(P_5))\, q([T_{S_2}]) + q([T_{S_5}]) \right). 
\end{align*}
Therefore, the vector 
\[
v = (\chi_{S_2}(P_5)-2)\, q([T_{S_2}]) + q([T_{S_5}])
\]
spans a rank one \( \mathbb{Z}[W] \)-module on which \( s_{1} \)
acts trivially and \( s_{2} \) acts by multiplication by \(-1\). This
module corresponds to \( \phi_{\psi_{(1,2)}} \). Returning to
\eqref{eq:isoV2}, we obtain the decomposition 
\[
\left( \langle q([T_{S_2}]), q([T_{S_4}]), q([T_{S_5}]) \rangle, \sigma \right)
\cong \left( \langle v \rangle, \sigma \right) \oplus \phi_{\psi_{(3)}}.
\]
Following~\eqref{eq:CCP5CCP6}, the only possible values for \(
\chi_{S_2}^{}(P_5) \) are \( 0 \), \( 1 \), or \( 2 \). We will show
that any value other than \( 0 \) leads to a contradiction. 
Suppose \( \chi_{S_2}^{}(P_5) = 2 \) so that
\[
\phi_{\psi_{(1,2)}} \cong (\langle v \rangle, \sigma ) =  \left(
\langle q([T_{S_5}]) \rangle, \sigma \right). 
\]
Any complement of \( \langle q([T_{S_5}]) \rangle \) is of the form
\begin{equation}\label{eq:complement0}
\langle q([T_{S_2}]) + n\, q([T_{S_5}]),\ q([T_{S_4}]) + m\,
q([T_{S_5}]) \rangle \quad \text{with } n, m \in \mathbb{Z}. 
\end{equation}
Let us show that the complement cannot be \( W \)-invariant. 
Indeed, 
 by~\eqref{eq:sigma-action}, the element
\[
\sigma(s_{2}) \cdot \left( q([T_{S_4}]) + m\, q([T_{S_5}]) \right)
\]
only lies in the submodule~\eqref{eq:complement0}  when \(
3n + 2m = 1 \), and if this condition is satisfied, then 
\[
\sigma(s_{1}) \cdot \left( q([T_{S_4}]) + m\, q([T_{S_5}]) \right)
\]
does not lie in~\eqref{eq:complement0}.

Similarly, if \( \chi_{S_2}(P_5) = 1 \), then
\[
\phi_{\psi_{(1,2)}} \cong 
\left( \langle -q([T_{S_2}]) + q([T_{S_5}]) \rangle, \sigma \right).
\]
Any  complement of this submodule is of the form
\begin{align}\label{eq:complement}
\left\langle 
q([T_{S_2}]) + q([T_{S_5}]) + n\, z,\ 
q([T_{S_4}]) + m\, z 
\right\rangle, \quad \text{with } n, m \in \mathbb{Z},
\end{align}
where \( z = -q([T_{S_2}]) + q([T_{S_5}]) \).
By~\eqref{eq:sigma-action}, we have   
\begin{align*}
\sigma(s_{2})\cdot (q([T_{S_4}])+mv)&=
2q([T_{S_2}])+q([T_{S_4}]) +q([T_{S_5}])-m z\\
&=\frac{3}{2}\left(q([T_{S_2}])+q([T_{S_5}])\right)+q([T_{S_4}])
-\frac{1+2m}{2} z.
%\left(-q([T_{S_2}])+q([T_{S_5}])\right)
\end{align*} 
and this element does not belong to the $\mathbb{Z}$-module
(\ref{eq:complement}).    
We are thus left with the remaining possibility that
$\chi_{S_2}^{}(P_5)=0$.  This proves Lemma \ref{lem=chiS5}.
Additionally, we observe that  equations \eqref{eq:TS4TS5} reduce to
\begin{align*}
s_{2}\cdot [T_{S_4}]&=
[T_{S_2}]+[T_{S_4}]
+[T_{S_5}]\\
s_{1}\cdot [T_{S_5}]&=
2[T_{S_4}]
+[T_{S_5}]+[T_{S_6}] 
 +  [T_{S_8}]
\end{align*}
yielding
\[
\phi_{\psi_{(1,2)}}\oplus \phi_{\psi_{(3)}} \cong 
\left( \langle -2q([T_{S_2}]) + q([T_{S_5}]) \rangle, \sigma \right)\oplus
\left( \langle q([T_{S_2}]) + q([T_{S_5}]), q([T_{S_4}]) \rangle, \sigma \right)
\]

\end{proof}
In computing these characteristic cycles, we have determined all the
coefficients appearing in equation  \eqref{eq:horizontal} of Theorem
\ref{theo:Waction}. Table  
\ref{eq:ccactionOnConormal}, which
describes the $W$-action of the simple reflections on the conormal
bundles to the ${}^{\vee}K$-orbits in the flag variety $X =
{}^{\vee}\mathrm{G}_{2} / {}^{\vee}B$, can now be completed as follows.

\begin{minipage}{\linewidth}
\medskip
\centering
\captionof{table}{$\left({}^{\vee}\mathrm{G}_{2}(\lambda),
  {}^{\vee}K(\lambda)\right)\,   =\, 
  {}(\mathrm{G}_{2},\mathrm{SL}_2\times_{\upmu_2}\mathrm{SL}_2)$} 
\label{tab:Wactiononconormal}   
\begin{tabular}{ | l | c | c | }
\hline 
 $i$ & $s_{1}\cdot [T_{S_i}]$ & $s_{2}\cdot [T_{S_i}]$ \\ \hline	\hline	
   0 & $[T_{S_0}]+[T_{S_4}]$ & $[T_{S_0}]+[T_{S_3}]$\\	\hline
   1 & $-[T_{S_1}]$ & $[T_{S_1}]+[T_{S_3}]$ \\ \hline
   2 & $[T_{S_2}]+[T_{S_4}]$ & $-[T_{S_2}]$ \\ \hline
   3 & $3[T_{S_1}]+[T_{S_3}]+[T_{S_6}]$ & $-[T_{S_3}]$ \\ \hline
   4 & $-[T_{S_4}]$ & $[T_{S_2}]+[T_{S_4}]+[T_{S_5}]$\\ \hline
   5 & $2[T_{S_4}]+[T_{S_5}]+[T_{S_6}]+[T_{S_8}]$ & $-[T_{S_5}]$\\ \hline
   6 & $-[T_{S_6}]$ & $[T_{S_6}]+[T_{S_7}]$\\ \hline
   7 & $[T_{S_6}]+[T_{S_7}]+[T_{S_8}]+2[T_{S_9}]$ & $-[T_{S_7}]$\\ \hline
   8 & $-[T_{S_8}]$ & $[T_{S_8}]+2[T_{S_9}]$\\ \hline
   9 & $-[T_{S_9}]$ & $-[T_{S_9}]$\\ 
 \hline  
\end{tabular}
\bigskip
\end{minipage}

In~\eqref{eq:sigma-action}, using the identification
in~\eqref{eq:Springer-representation2}, we described the action of $W$
on a basis of $\mathrm{H}_{\mathrm{top}}^{\mathrm{BM}}(\mu^{-1}(N),
\mathbb{Z})^{A_K(N)}$ for 
$N \in \mathcal{O}_{5,2}$.  
Similarly, for any nilpotent ${}^{\vee}K$-orbit $\mathcal{O}$ and $N
\in \mathcal{O}$, the action of $W$ on a basis of
$\mathrm{H}_{\mathrm{top}}^{\mathrm{BM}}(\mu^{-1}(N),
\mathbb{Z})^{A_K(N)}$ can be readily 
deduced from Table~\ref{tab:Wactiononconormal}. We leave the details
to the interested reader. 

\section{Characteristic cycles in the case of a regular non-integral
  infinitesimal character}
\label{section:G2computationsNon-integral}

Let $\lambda\in {}^\vee\mathfrak{t}$ be a regular integrally dominant element
(\ref{intdom}).  In the previous section we assumed that $\lambda$ was
integral (\ref{intdom}).  According to Proposition \ref{glambda}, the
integrality of $\lambda$ is equivalent to
${}^{\vee}\mathrm{G}_{2}(\lambda) = {^\vee}\mathrm{G}_{2}$.
In this section we assume that $\lambda$ is \emph{not} integral.
By Proposition \ref{glambda}, there are four  possibilities for
${}^{\vee}\mathrm{G}_{2}(\lambda)$: 
$${^\vee}T, \ \mathrm{GL}_{2}, \
\mathrm{SL}_{2} \times_{\upmu_{2}} \mathrm{SL}_{2}, \ \mathrm{SL}_{3}.$$
Write $X(\lambda)$ for the flag variety 
${}^{\vee}\mathrm{G}_{2}(\lambda) /\,  ^{\vee}B(\lambda)$
of ${}^{\vee}\mathrm{G}_{2}(\lambda)$.
As in Proposition \ref{Ilambdaorbits}, we continue to denote
${}^{\vee}K(\lambda)$  for the analogues of the subgroup 
group ${}^{\vee}K \subset {}^{\vee}\mathrm{G}_{2}$.  

The set $\{P(\xi):\, \xi\in
\Xi(X(\lambda),{}^{\vee}K(\lambda)) \}$ of
${}^{\vee}K(\lambda)$-equivariant irreducible perverse sheaves on
$X(\lambda)$, was described in (\ref{eq:IrrObj2}). 
The goal of this section is to compute the characteristic cycles of all  
these irreducible perverse sheaves. This turns out to be an easy exercise,
which is a direct consequence of Lemma \ref{lem:lemorbit}  and
Proposition \ref{prop:CC2}. 
\begin{theo}\label{theo:CCsNonintegral}
Let $P$ be an irreducible perverse sheaf in
$\mathcal{P}(X(\lambda),{}^{\vee}K(\lambda))$   
with support in the ${}^{\vee}K(\lambda)$-orbit closure
$\overline{S}$. Then 
\begin{equation}\label{eq:CCnonintegral}
CC(P)=[T_S].
\end{equation}
\end{theo}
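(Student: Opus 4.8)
The plan is to deduce everything from Lemma~\ref{lem:lemorbit} and Proposition~\ref{prop:CC2}, running through the pairs $({}^{\vee}\mathrm{G}_{2}(\lambda),{}^{\vee}K(\lambda))$ of Proposition~\ref{Ilambdaorbits}. Writing $\xi=(S,\underline{\mathbb{C}}_{S})$, Lemma~\ref{lem:lemorbit} already gives $CC(P(\xi))=[T_{S}]+\sum_{S'\subsetneq\overline{S}}\chi_{S'}(P(\xi))\,[T_{S'}]$, so the whole content of the theorem is that $\chi_{S'}(P(\xi))=0$ for every orbit $S'\subsetneq\overline{S}$. If $S$ is closed --- which in particular covers every case with ${}^{\vee}K(\lambda)={}^{\vee}\mathrm{G}_{2}(\lambda)$, where $X(\lambda)$ is a single orbit --- then $\overline{S}=S$ and there is nothing to prove. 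So the argument reduces to the non-closed orbits in rows 2--4 of the table of Proposition~\ref{Ilambdaorbits}: for $(\mathrm{GL}_{2},{}^{\vee}T)$ and $(\mathrm{SL}_{2}\times_{\upmu_{2}}\mathrm{SL}_{2},\mathrm{GL}_{2})$ the only non-closed orbit is the open one, while for $(\mathrm{SL}_{3},\mathrm{GL}_{2})$ the non-closed orbits are the two two-dimensional orbits $S_{3},S_{4}$ and the open orbit $S_{5}$, with closure relations recorded in the Hasse diagram~\eqref{sl3hasse}.

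First I would treat the open orbit $S$ (cases (i), (ii), and $S_{5}$ in case (iii)). The open dense orbit is $s_{\alpha}$-vertical for every simple root $\alpha$: for $x\in S$ the fibre $\pi_{\alpha}^{-1}(\pi_{\alpha}(x))$ is a $\mathbb{P}^{1}$ meeting $S$ in a nonempty --- hence dense --- open subset. Since $S$ is then dense in $\pi_{\alpha}^{-1}(\pi_{\alpha}(S))$, one has $\overline{S}=\pi_{\alpha}^{-1}(\overline{\pi_{\alpha}(S)})$, so $P(\xi)=\pi_{\alpha}^{\ast}\,IC(\overline{\pi_{\alpha}(S)},\underline{\mathbb{C}})[1]$ is a $\pi_{\alpha}$-pullback and hence satisfies $s_{\alpha}\cdot P(\xi)=-P(\xi)$, i.e.\ $\alpha\in\tau(P(\xi))$ for every simple $\alpha$. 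By Proposition~\ref{prop:CC2}, any $S'$ with $\chi_{S'}(P(\xi))\neq0$ is $s_{\alpha}$-vertical for every simple $\alpha$, so $\overline{S'}=\pi_{\alpha}^{-1}(\overline{\pi_{\alpha}(S')})$ for each $\alpha$; thus $\overline{S'}$ is stable under each minimal parabolic subgroup of ${}^{\vee}\mathrm{G}_{2}(\lambda)$. These parabolics generate ${}^{\vee}\mathrm{G}_{2}(\lambda)$, so the closed set $\overline{S'}$ is ${}^{\vee}\mathrm{G}_{2}(\lambda)$-stable, whence $\overline{S'}=X(\lambda)$ and $S'=S$. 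Therefore $CC(P(\xi))=[T_{S}]$.

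It remains to handle $S_{3}$ and $S_{4}$ in the $\mathrm{SL}_{3}$ case. From~\eqref{sl3hasse} we read $\overline{S_{3}}=S_{3}\cup S_{0}\cup S_{2}$ with $S_{3}=m(s_{2})(S_{0})=m(s_{2})(S_{2})$, so that $S_{3}$ is $s_{2}$-vertical while $S_{0}$ and $S_{2}$ are $s_{2}$-horizontal. As above the irreducible perverse sheaf supported on $\overline{S_{3}}$ is pulled back along $\pi_{2}$, so $\alpha_{2}\in\tau(P(\xi_{3}))$, and Proposition~\ref{prop:CC2} forces $\chi_{S_{0}}(P(\xi_{3}))=\chi_{S_{2}}(P(\xi_{3}))=0$, giving $CC(P(\xi_{3}))=[T_{S_{3}}]$. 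The orbit $S_{4}$ is dealt with identically, using $\alpha_{1}$, the relation $\overline{S_{4}}=S_{4}\cup S_{0}\cup S_{1}$, and the $s_{1}$-horizontality of $S_{0},S_{1}$ recorded in~\eqref{sl3hasse}.

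The main obstacle will be the verification of $\alpha\in\tau(P(\xi))$ that feeds Proposition~\ref{prop:CC2}; I would justify it either by the identification $\overline{S}=\pi_{\alpha}^{-1}(\overline{\pi_{\alpha}(S)})$ valid for any $s_{\alpha}$-vertical orbit $S$ (whence $P(\xi)$ is a $\pi_{\alpha}$-pullback and $s_{\alpha}\cdot P(\xi)=-P(\xi)$), or simply by quoting the explicit coherent-continuation action in each of these small groups ${}^{\vee}\mathrm{G}_{2}(\lambda)$, computed with the Atlas software exactly as in Section~\ref{cohcont}; the rest is bookkeeping with the closure relations of~\eqref{sl3hasse} and the elementary fact that the open orbit is the unique orbit vertical for all simple roots. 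One can in fact shortcut the open-orbit and the $S_{3},S_{4}$ cases altogether, since the relevant $P(\xi)$ is then the shifted constant sheaf on a smooth subvariety --- namely $X(\lambda)$ itself, or a $\mathbb{P}^{1}$-bundle over $\mathbb{P}^{1}$ --- whose characteristic cycle is the closure of its conormal bundle, i.e.\ $[T_{S}]$; but the route above stays within the framework of Lemma~\ref{lem:lemorbit} and Proposition~\ref{prop:CC2}, as the theorem's placement suggests.
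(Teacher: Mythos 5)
Your proof is correct and follows the same skeleton as the paper's: closed orbits (and the single-orbit cases ${}^{\vee}K(\lambda)={}^{\vee}\mathrm{G}_{2}(\lambda)$) are dispatched by Lemma~\ref{lem:lemorbit}, and the non-closed orbits by feeding tau-invariants and horizontality into Proposition~\ref{prop:CC2}, exactly as in the derivations of \eqref{eq:CC3}, \eqref{eq:CC4} and \eqref{eq:CC9}. The genuine difference lies in how the key input $\alpha\in\tau(P(\xi))$ is obtained: the paper quotes the Atlas-computed coherent continuation action (Table~\ref{eq:ccactionSL3}) and reads the tau-invariants off it, whereas you derive them geometrically --- an $s_\alpha$-vertical orbit has $\pi_\alpha$-saturated closure, so its intersection cohomology sheaf with trivial local system (the only local systems occurring here) is a $\pi_\alpha$-pullback and therefore satisfies $s_\alpha\cdot P=-P$. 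This buys a software-free argument, and as a bonus your parabolic-generation argument gives a clean general proof that the open orbit is the unique orbit vertical for every simple root, a fact the paper only asserts case by case. Your smoothness shortcut is also valid (the smoothness of the $\mathrm{SL}_3$ orbit closures is confirmed in Section~\ref{KashiwaraSection}). One caveat: your assignments $\alpha_2\in\tau(P(\xi_3))$ and $\alpha_1\in\tau(P(\xi_4))$, read off the edge labels of \eqref{sl3hasse}, are the opposite of what Table~\ref{eq:ccactionSL3} records ($s_1\cdot P_3=-P_3$ and $s_2\cdot P_4=-P_4$); the diagram and the table are not mutually consistent in their labelling of $s_1$ versus $s_2$ on the middle orbits, so be aware that you are following the diagram's convention. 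Since the roles of the two middle orbits and the two simple roots are symmetric, this does not affect the conclusion $CC(P)=[T_S]$.
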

\begin{proof}
We provide a proof based on the cases listed in Proposition
\ref{Ilambdaorbits}.  In the case that
${}^{\vee}K(\lambda)={}^{\vee}\mathrm{G}_{2}(\lambda)$ there is
only one ${}^{\vee}K(\lambda)$-orbit on $X(\lambda)$, and the result follows
from Lemma \ref{lem:lemorbit}.   

For all other pairs
$({}^{\vee}\mathrm{G}_{2}(\lambda),{}^{\vee}K(\lambda))$, equality
(\ref{eq:CCnonintegral}) ends up being a direct consequence of Proposition
\ref{prop:CC2}.   Among those pairs, the most interesting case is
the pair $(\mathrm{SL}_3,\mathrm{GL}_2)$.  We  provide the details for
this case.  As orbits $S_0$, $S_1$, and $S_2$ are closed
(\ref{sl3hasse}), equality 
(\ref{eq:CCnonintegral} follows for these orbits from 
Lemma \ref{lem:lemorbit}. 

As we did in Table \ref{eq:ccaction}, in the case of the pair
$(\mathrm{G}_{2},\mathrm{SL}_2\times_{\upmu_2}\mathrm{SL}_2)$, we
summarize in the following table, the coherent continuation
representation of the simple reflections on the set irreducible
perverse sheaves $\{P(\xi_i):0\leq i\leq 5\}$ (\ref{eq:IrrObj2}). For
simplicity, we write $P_i$ instead of $P(\xi_i)$.  

\begin{table}[!htb]
\centering   
   % \caption{Global caption}
    \begin{minipage}{.5\linewidth}
      \caption{$\left({}^{\vee}\mathrm{G}_{2}(\lambda),{}^{\vee}K(\lambda)\right)\, =\, (\mathrm{SL}_3,\mathrm{GL}_2)$      
      }\label{eq:ccactionSL3}
      \centering
      \begin{tabular}{ | l | c | c | }
\hline 
 $i$ & $s_{1}\cdot P_i$ & $s_{2}\cdot P_i$ \\ \hline \hline		
   0 & $P_0+P_4$ & $P_0+P_3$\\	\hline
   1 & $-P_1$ & $P_1+P_4$ \\ \hline
   2 & $P_2+P_3$ & $-P_2$ \\ \hline
   3 & $-P_3$ & $P_2+P_3+P_5$  \\ \hline
   4 & $P_1+P_4+P_5$ & $-P_4$\\ \hline
   5 & $-P_5$ & $-P_5$\\ 
 \hline  
\end{tabular}
    \end{minipage}%
\end{table}

Looking at Table \ref{eq:ccactionSL3} and
(\ref{eq:verticalP}), we see that 
\begin{align*}
\begin{aligned}
%\label{eq:tauSL3}
\alpha_1\in \tau(P(\xi_i))&\quad\text{ if and only if }\quad i\in\{1,3,5\},\\
\alpha_2\in \tau(P(\xi_i))&\quad\text{ if and only if }\quad i\in\{2,4,5\}.
\end{aligned}
\end{align*}
As a consequence, equality~(\ref{eq:CCnonintegral}) follows  for
$P(\xi_3)$ and $P(\xi_4)$ by the exact same argument used to prove
(\ref{eq:CC3}) and (\ref{eq:CC4}). 
The case of $P(\xi_5)$ is similar and follows by the same argument
used to obtain  (\ref{eq:CC9}).

For the remaining pairs described in Proposition \ref{Ilambdaorbits},
the situation is less exciting. In these cases, the open orbit $S$
carries only the trivial local system $\underline{\mathbb{C}}_S$, and
the action of any simple reflection on  $P(\xi)$, where
$\xi=(S,\underline{\mathbb{C}}_S)$, satisfies  
%\label{eq:ccothercases}
$$s_\alpha\cdot P(\xi)=-P(\xi).$$
Equality $CC(P(\xi))=[T_S]$ can then be proved as equality (\ref{eq:CC9}).
This completes the proof of Theorem~\ref{theo:CCsNonintegral},
since all the other orbits are closed, and Lemma~\ref{lem:lemorbit}
takes care of Equality~(\ref{eq:CCnonintegral}) for closed orbits.
\end{proof}

As an application of Theorem \ref{theo:CCsNonintegral}, we determine
the $W$-action of the simple reflections on the conormal bundles for
the pair $(\mathrm{SL}_3,\mathrm{GL}_2)$.
Since the characteristic cycle of every irreducible perverse sheaf 
reduces to a single conormal bundle, the $W$-equivariance
\eqref{eq:W-equivariant}  of the characteristic cycle map implies that
the $W$-action  on the conormal bundles follows the same pattern as
Table \ref{eq:ccactionSL3}. More precisely,
\begin{table}[!htb]
\centering   
   % \caption{Global caption}
    \begin{minipage}{.5\linewidth}
       \caption{$\left({}^{\vee}\mathrm{G}_{2}(\lambda),{}^{\vee}K(\lambda)\right)\, =\, (\mathrm{SL}_3,\mathrm{GL}_2)$      
      }
      \centering
\begin{tabular}{ | l | c | c | }
\hline 
 $i$ & $s_{1}\cdot [T_{S_i}]$ & $s_{2}\cdot [T_{S_i}]$ \\ \hline \hline		
   0 & $[T_{S_0}] + [T_{S_4}]$ & $[T_{S_0}] + [T_{S_3}]$\\	\hline
   1 & $-[T_{S_1}]$ & $[T_{S_1}] + [T_{S_4}]$ \\ \hline
   2 & $[T_{S_1}] + [T_{S_3}]$ & $-[T_{S_2}]$ \\ \hline
   3 & $-[T_{S_3}]$ & $[T_{S_2}] + [T_{S_3}] + [T_{S_5}]$ \\ \hline
   4 & $[T_{S_1}] + [T_{S_4}] + [T_{S_5}]$ & $-[T_{S_4}]$\\ \hline
   5 & $-[T_{S_5}]$ & $-[T_{S_5}]$\\ 
 \hline  
\end{tabular}
          \end{minipage}
    \end{table}

\section{Computing Micro-Packets}

Let $\lambda\in {}^\vee\mathfrak{t}$ be a regular integrally dominant
element (\ref{intdom}).   In this section we compute the micro-packets
of  each of the
${}^{\vee}K(\lambda)$-orbits $S$ on the flag variety $X(\lambda)$ 
of ${}^{\vee}\mathrm{G}_{2}(\lambda)$. 
We begin by briefly recalling the definition of the micro-packet corresponding
to $S$ as given in the introduction. The definition of characteristic
cycles (\ref{cc}) involves a map 
$$\chi_{S}^{\mathrm{mic}}:\mathscr{K}(X,H)\rightarrow \mathbb{Z}$$
called the microlocal multiplicity along $S$
(\ref{eq:characteristicvariety}). 
% Adams, Barbasch and 
%Vogan use the microlocal multiplicity map to associate to $S$ a the
%micro-packet $\Pi_S^{\mathrm{mic}}$, which is a finite set of
%irreducible representations of pure real forms.  
%In the case of $\mathrm{G}_{2}$, there are only two pure real
%forms, the split real form and the compact real form. 
%. We denote by $\mathrm{G}_{2}(\mathbb{R}, \delta_s)$ and
%$\mathrm{G}_{2}(\mathbb{R}, \delta_c)$ the corresponding sets of real
%points. 
The micro-packet $\Pi_S^{\mathrm{mic}}$ is defined in the generalized setting of
(\ref{landscape}) and (\ref{landscape1}) as follows. By the
Local Langlands Correspondence, to each
complete geometric parameter $\xi \in \Xi(X(\lambda),
{}^{\vee}K(\lambda))$ there corresponds an irreducible representation
$\pi(\xi)$ of either the split or compact form of $\mathrm{G}_{2}$. The 
 micro-packet is defined as 
\begin{equation}\label{eq:DefMicroPacket}
\Pi_{S}^{\mathrm{mic}}\, =\, \{\pi(\xi):\chi_{S}^{\mathrm{mic}}(P(\xi))\neq 0\}.
\end{equation}
(\emph{cf.}~(\ref{mpacket})).
For each $S$ there is also a strongly stable distribution given by
$$
\eta_S^{\mathrm{mic}}=\sum_{\xi\in \Xi(X(\lambda),{}^{\vee}K(\lambda))}
e(\xi) \, (-1)^{\dim(S_\xi)-\dim(S)} \, \chi_{S}^{\mathrm{mic}}(P(\xi)) \ \pi(\xi)
$$
(\emph{cf.}~(\ref{etamic})). 
%Here $e(\xi)$ is Kottwitz' sign (see \cite{Kot} and \cite[Definition
%15.8]{ABV}) whose value is one if 
%$\pi(\xi)$ is a representation of $\mathrm{G}_{2}(\R,\delta_s)$ and
%minus one otherwise. 
\begin{comment}
Now, recall that to each A-parameter $\psi$ of $\mathrm{G}_{2}$, there
is an associated L-parameter  
$\varphi_\psi$, and by \eqref{lparambij}, this corresponds to a
${}^{\vee}K(\lambda)$-orbit on $X(\lambda)$, which we denote by
$S_\psi$. In this case, we  
refer to the micro-packet of $S_\psi$ as the  
$\mathrm{ABV}$-packet of $\psi$, and write
$$
\Pi_\psi^{\mathrm{ABV}}:=\Pi_{S_\psi}^{\mathrm{mic}}.
$$
\end{comment}
We shall give an explicit description of these micro-packets and stable
distributions. 

The most interesting case arises when $\lambda$ is
regular and integral. In this case 
$$\left({}^{\vee}\mathrm{G}_{2}(\lambda),{}^{\vee}K(\lambda)\right) =
({^\vee}\mathrm{G}_{2} , {^\vee}K) \cong
 {}(\mathrm{G}_{2},\mathrm{SL}_2\times_{\upmu_2}\mathrm{SL}_2),$$  
and $X(\lambda)$ is the flag variety ${}^{\vee}\mathrm{G}_{2}/
{^\vee}B$ as in Sections \ref{Korbits} and \ref{section:G2computations} .
The microlocal multiplicities along ${}^{\vee}K$-orbits of
$X(\lambda)$ were computed in Theorem~\ref{theo:CCs}. The following
theorem is therefore an immediate consequence of Theorem~\ref{theo:CCs} and
the definitions of the micro-packets and stable distributions.  
\begin{theo}\label{theo:Micropackets}
 The micro-packet and stable distribution, associated to each
 ${}^{\vee}K$-orbit of the flag variety of ${}^{\vee}\mathrm{G}_{2}$
 are described in the following table. 
\begin{center}
\begin{tabular}{ | l | c | c | }
\hline
 $i$ & $\Pi_{S_i}^{\mathrm{mic}}$ & $\eta_{S_i}^{\mathrm{mic}}$
\\ \hline \hline		
   0 & $\{\pi(\xi_0)\}$ & $\pi(\xi_0)$\\	\hline
   1 & $\{\pi(\xi_{10}),\pi(\xi_6),\pi(\xi_1)\}$ & $\pi(\xi_{10})+2\pi(\xi_6)+\pi(\xi_1)$ \\ \hline
   2 & $\{\pi(\xi_{11}),\pi(\xi_2)\}$ & $\pi(\xi_{11})+\pi(\xi_2)$ \\ \hline
   3 & $ \{\pi(\xi_7),\pi(\xi_3)\}$  & $ \pi(\xi_7)+\pi(\xi_3)$ \\ \hline
   4 & $\{\pi(\xi_8),\pi(\xi_4)\}$ & $\pi(\xi_8)+\pi(\xi_4)$\\ \hline
   5 & $\{\pi(\xi_5)\}$ & $\pi(\xi_5)$\\ \hline
   6 & $\{\pi(\xi_{10}),\pi({\xi_8}),\pi(\xi_6)\}$ & $\pi(\xi_{10})-\pi({\xi_8})+\pi(\xi_6)$\\ \hline
   7 & $ \{\pi(\xi_{11}),\pi(\xi_7)\}$ & $ -\pi(\xi_{11})+\pi(\xi_7)$\\ \hline
   8 & $\{\pi(\xi_{10}),\pi(\xi_8)\}$ & $-\pi(\xi_{10})+\pi(\xi_8)$\\ \hline
   9 & $ \{\pi(\xi_9),\pi(\xi_{10}),\pi(\xi_{11}),
\boldsymbol{\pi(\xi_{12})}\}$ & $ \pi(\xi_9)+\pi(\xi_{10})+\pi(\xi_{11})-
\boldsymbol{
\pi(\xi_{12})}$\\ 
  \hline  
 \end{tabular}
\end{center}
We have written $\pi(\xi_{12})$ in bold to indicate that it is a
representation of the compact pure real form of $\mathrm{G}_2$. All
other representations are representations of the split real form. 
\begin{comment}
\begin{eqnarray*}
\Pi_{S_0}^{\mathrm{mic}} = \{\pi(x_9,y_0)\}&;& 
\eta_{S_0}^{\mathrm{mic}}=\pi(x_9,y_0)\\
\Pi_{S_1}^{\mathrm{mic}} = \{\pi(x_2,y_{9}),\pi(x_6,y_6),\pi(x_{9},y_1)\}
&;& 
\eta_{S_1}^{\mathrm{mic}}=\pi(x_2,y_{9})+2\pi(x_6,y_6)+\pi(x_{9},y_1)
\\
\Pi_{S_2}^{\mathrm{mic}} = \{\pi(x_1,y_{9}),\textcolor{red}{\pi(x_5,y_5)},\pi(x_{9},y_2)\}
&;& 
\eta_{S_2}^{\mathrm{mic}}=\pi(x_1,y_{9})+\textcolor{red}{\chi_{S_2}^{}(P_5)\pi(x_5,y_5)}+\pi(x_{9},y_2)\\
\Pi_{S_3}^{\mathrm{mic}} = \{\pi(x_3,y_7),\pi(x_7,y_3)\}
&;& 
\eta_{S_3}^{\mathrm{mic}}=
\pi(x_3,y_7)+\pi(x_7,y_3)
\\
\Pi_{S_4}^{\mathrm{mic}} = \{\pi(x_4,y_8),\pi(x_{8},y_4)\}
&;& 
\eta_{S_4}^{\mathrm{mic}}=\pi(x_4,y_8)+\pi(x_{8},y_4)\\
\Pi_{S_5}^{\mathrm{mic}} = \{\pi(x_5,y_5)\}
&;& 
\eta_{S_5}^{\mathrm{mic}}=\pi(x_5,y_5)\\
\Pi_{S_6}^{\mathrm{mic}} = \{\pi({x_2,y_9}),\pi({x_4,y_8}),\pi(x_6,y_6)\}
&;& 
\eta_{S_6}^{\mathrm{mic}}=
\pi({x_2,y_9})-\pi({x_4,y_8})+\pi(x_6,y_6)
\\
\Pi_{S_7}^{\mathrm{mic}} = \{\pi(x_1,y_{9}),\pi(x_{4},y_8)\}
&;& 
\eta_{S_7}^{\mathrm{mic}}=-\pi({x_1,y_9})+\pi({x_3,y_7})
\\
\Pi_{S_8}^{\mathrm{mic}} = \{\pi(x_2,y_{9}),\pi(x_{4},y_8)\}
&;& 
\eta_{S_8}^{\mathrm{mic}}=-\pi({x_2,y_9})+\pi({x_4,y_8})\\
\Pi_{S_9}^{\mathrm{mic}} = \{\pi(x_0,y_9),\pi(x_{1},y_9),\pi(x_2,y_9),\pi(x_0',y_9)\}
&;& 
\eta_{S_1}^{\mathrm{mic}}=
\pi(x_0,y_9)+\pi(x_{1},y_9)+\pi(x_2,y_9)-\pi(x_0',y_9).
\end{eqnarray*}
\end{comment}
\end{theo}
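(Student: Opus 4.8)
The plan is to read Theorem~\ref{theo:Micropackets} off from the characteristic cycles computed earlier, by unwinding the definition \eqref{eq:DefMicroPacket} of the micro-packet and the formula \eqref{etamic} for the associated stable distribution. The only input needed is the full table of microlocal multiplicities $\chi^{\mathrm{mic}}_{S_i}(P(\xi_j))$, $0\le i\le 9$, $0\le j\le 12$, and $\chi^{\mathrm{mic}}_{S_i}(P(\xi_j))$ is by definition the coefficient of $\overline{T^\ast_{S_i}X}$ in $CC(P(\xi_j))$. All of these cycles are available: $CC(P(\xi_j))=\overline{T^\ast_{S_j}X}$ for $j=0,1,2$ by \eqref{CC012}, for $j=3,4$ by \eqref{eq:CC3}--\eqref{eq:CC4}, and for $j=9$ by \eqref{eq:CC9}; while $CC(P(\xi_j))$ for $j=5,6,7,8,10,11$ is exactly Theorem~\ref{theo:CCs}. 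Assembling these into a table of multiplicities is purely mechanical.

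The one parameter of \eqref{eq:IrrObj1} not covered by Theorem~\ref{theo:CCs} is $\xi_{12}=(S_9,\mathcal L_{12})$. Here Lemma~\ref{lem:lemorbit} already gives $\chi^{\mathrm{mic}}_{S_9}(P(\xi_{12}))=1$, and to finish I would show $CC(P(\xi_{12}))=\overline{T^\ast_{S_9}X}$ by repeating the argument used for $P(\xi_9)$: the representation attached to $P(\xi_{12})$ is a representation of the compact real form of $\mathrm{G}_2$, hence finite dimensional, so $\mathrm{AV}(P(\xi_{12}))=\mathcal O_0$; then, exactly as in the paragraph following \eqref{eq:AVfor9}, for every orbit $S\neq S_9$ the fibre $T^\ast_{S,\mathfrak b'}X$ is nonzero, so $\mu$ does not vanish on $\overline{T^\ast_{S}X}$, forcing $\chi^{\mathrm{mic}}_{S}(P(\xi_{12}))=0$ for $S\neq S_9$. (Alternatively, the Atlas computation gives $\tau(P(\xi_{12}))=\{{}^{\vee}\alpha_1,{}^{\vee}\alpha_2\}$ and the claim follows from Proposition~\ref{prop:CC2} as in \eqref{eq:CC9}.)

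With all microlocal multiplicities in hand, the micro-packet $\Pi^{\mathrm{mic}}_{S_i}$ consists of those $\pi(\xi_j)$ for which $\overline{T^\ast_{S_i}X}$ occurs in $CC(P(\xi_j))$; comparing with the cycles above reproduces the first column of the table. For the stable distribution \eqref{etamic} two further data are required. The orbit dimensions come from the Hasse diagram \eqref{eq:BruhatOrbits}: the closed orbits $S_0,S_1,S_2$ have dimension $2$ (cf.\ the proof of Lemma~\ref{closedorb}), and each solid edge raises the dimension by one, so $\dim S_3=\dim S_4=3$, $\dim S_5=\dim S_6=4$, $\dim S_7=\dim S_8=5$, $\dim S_9=6$, while $\xi_{10},\xi_{11},\xi_{12}$ are all supported on $S_9$. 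For the signs, $\pi(\xi_j)$ is a representation of the split (quasi-split) form for $0\le j\le 11$, with Kottwitz sign $e(\xi_j)=1$, whereas $\pi(\xi_{12})$ is the unique irreducible representation of the compact form with infinitesimal character $\lambda$ and has $e(\xi_{12})=-1$; the real-form assignments are furnished by the Atlas software \cite{atlas} and the sign conventions are those of \cite[(17.24)(g), Corollary~19.16]{ABV}. Substituting into \eqref{etamic} yields the second column; for instance the contribution of $\xi_{12}$ to $\eta^{\mathrm{mic}}_{S_9}$ is $e(\xi_{12})(-1)^{\dim S_9-\dim S_9}\chi^{\mathrm{mic}}_{S_9}(P(\xi_{12}))\,\pi(\xi_{12})=-\pi(\xi_{12})$, the only minus sign attached to a compact-form term.

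The substantive work is entirely in Theorem~\ref{theo:CCs}; what remains is bookkeeping. The main obstacle is therefore not a computation but the two pieces of external input needed to close small gaps: the value $CC(P(\xi_{12}))=\overline{T^\ast_{S_9}X}$, which falls outside the scope of Theorem~\ref{theo:CCs}, and — more importantly — the identification of $\pi(\xi_{12})$ with the compact real form together with the value $e(\xi_{12})=-1$ of its Kottwitz sign, this being the sole place where the split/compact dichotomy is visible in the final answer.
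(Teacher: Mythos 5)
Your reduction of the theorem to the characteristic cycle computations — assembling $\chi^{\mathrm{mic}}_{S_i}(P(\xi_j))$ from \eqref{CC012}, \eqref{eq:CC3}--\eqref{eq:CC9} and Theorem~\ref{theo:CCs}, inserting the orbit dimensions and the Kottwitz signs into \eqref{etamic} — is exactly the paper's argument, and you are right that the one parameter this leaves untouched is $\xi_{12}=(S_9,\mathcal{L}_{12})$. But your proposed treatment of $\xi_{12}$ does not work. By \eqref{eq:AVdef} and \eqref{eq:characteristicvariety}, $\mathrm{AV}(P(\xi_{12}))$ and $CC(P(\xi_{12}))$ are invariants of the \emph{dual-side} module $\pi({}^{\vee}\xi_{12})$, an irreducible $({}^{\vee}\mathfrak{g}_2,{}^{\vee}K)$-module for the split form ${}^{\vee}\mathrm{G}_2(\mathbb{R})$ — not of the primal representation $\pi(\xi_{12})$ of the compact form. (The paper's own argument for $\xi_9$ uses the dual-side module: it is $\pi({}^{\vee}\xi_9)$, the trivial representation of ${}^{\vee}\mathrm{G}_2(\mathbb{R})$, that is finite dimensional.) The finite-dimensionality of the compact form's $\pi(\xi_{12})$ therefore says nothing about $\mathrm{AV}(P(\xi_{12}))$. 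Worse, the conclusion $CC(P(\xi_{12}))=\overline{T^{\ast}_{S_9}X}$ cannot hold: if the characteristic variety were the zero section, the corresponding $\mathcal{D}$-module would be $\mathcal{O}$-coherent, hence a flat connection on the simply connected variety $X$, hence with trivial monodromy on $S_9$; but ${}^{\vee}K$ is connected, so $\pi_1(S_9)$ surjects onto the component group of the stabilizer and $\mathcal{L}_{12}$ is nontrivial as a plain local system. Your fallback claim that Atlas gives $\tau(P(\xi_{12}))=\{{}^{\vee}\alpha_1,{}^{\vee}\alpha_2\}$ is also unavailable: $\pi({}^{\vee}\xi_{12})$ is the irreducible minimal principal series forming the second (singleton) block of ${}^{\vee}\mathrm{G}_2(\mathbb{R})$ at infinitesimal character $\rho$ — which is why it is absent from Table~\ref{eq:ccaction}, from the cells of Proposition~\ref{prop:HCcells}, and from Theorem~\ref{theo:CCs} — and its tau-invariant is empty (translation of a full principal series to a wall is again a nonzero principal series), so Proposition~\ref{prop:CC2} imposes no constraint.

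This is not a gap you can close by the argument sketched; the step would prove something false. With $\tau(P(\xi_{12}))=\emptyset$ and a singleton block, coherent continuation gives $s_j\cdot P(\xi_{12})=P(\xi_{12})$ for $j=1,2$, so the $W$-equivariance \eqref{eq:W-equivariant} forces $CC(P(\xi_{12}))$ to be a $W$-invariant cycle with coefficient $1$ on $[T_{S_9}]$ (Lemma~\ref{lem:lemorbit}). From Table~\ref{tab:Wactiononconormal} the space of $W$-invariants is one-dimensional, spanned by $[T_{S_0}]+3[T_{S_1}]+[T_{S_2}]+2[T_{S_3}]+2[T_{S_4}]+[T_{S_5}]+2[T_{S_6}]+[T_{S_7}]+[T_{S_8}]+[T_{S_9}]$, so this must be $CC(P(\xi_{12}))$; note its moment-map image is $\overline{\mathcal{O}}_6$, consistent with the special-orbit constraint, whereas $\mathcal{O}_0$ is impossible for a non-trivial local system. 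In particular $\chi^{\mathrm{mic}}_{S_i}(P(\xi_{12}))\neq 0$ for every $i$, so by \eqref{eq:DefMicroPacket} the compact-form representation would enter every micro-packet, not only $\Pi^{\mathrm{mic}}_{S_9}$. So the contribution of $\xi_{12}$ is precisely the point at which the one-line deduction from Theorem~\ref{theo:CCs} is insufficient (the paper is silent on $CC(P(\xi_{12}))$ as well), and any complete proof of the stated table has to confront it directly rather than dispose of it by the finite-dimensionality or tau-invariant claims you make.
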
 

It is of interest to know which of these micro-packets can be 
Arthur packets in the sense of \cite[Section 22]{ABV}.  We refer the
reader to \cite[Section 22]{ABV} for more details in the following
investigation.  In the setting
of $\mathrm{G}_{2}$, an A-parameter reduces to a homomorphism
$$\psi: W_{\mathbb{R}} \times \mathrm{SL}_{2}  \rightarrow
{^\vee}\mathrm{G}_{2}$$
whose restriction $\psi_{|W_{\mathbb{R}}}$ is a tempered L-parameter,
and whose restriction $\psi_{|\mathrm{SL}_{2}}$ is algebraic.  The
Arthur packet $\Pi_{\psi}$ only depends on $\psi$ up to
${^\vee}\mathrm{G}_{2}$-conjugacy.  The 
restriction $\psi_{|\mathrm{SL}_{2}}$ is determined by the value of
its differential at the nilpotent element
$\begin{bmatrix} 0 & 1 \\ 0 &  0 \end{bmatrix} \in \mathfrak{sl}_{2}$, 
and the conjugacy 
class of this value falls into one of five possible nilpotent orbits
\cite[Section 6.20, Section 7.18]{HumphCC}.  Since
$\psi(W_{\mathbb{R}})$ commutes with $\psi(\mathrm{SL}_{2})$, the
image of $\psi_{|W_{\mathbb{R}}}$ lies in the (Levi subgroup of the)
centralizer of $\psi(\mathrm{SL}_{2})$.  These centralizers are well-understood
\cite[Section 13.1]{Carter85} and place restrictions on the nature of
$\psi$. 

By definition, each Arthur packet $\Pi_{\psi}$  is equal to
some micro-packet 
$\Pi^{\mathrm{mic}}_{S_{\ell}}$.  In order to determine which
$S_{\ell}$ this is in the present setting, we
need to first ensure that the infinitesimal character $\lambda$ of $\psi$
\cite[(22.8)(c)]{ABV}, is
dominant, regular and integral (\emph{cf.}~Section \ref{compgroupsec}).  Let
$S$ be the ${^\vee}K$-orbit which corresponds to the  tempered L-parameter
$\psi_{|W_{\mathbb{R}}}$ as in (\ref{lparambij}). Then
it turns out that $\Pi_{\psi} = \Pi_{S_{\ell}}^{\mathrm{mic}}$ only if $p(S_{\ell})
= p(S)$ (see Table \ref{orbitdata}).  
% Actually $p(S_{\ell})$ is equal to the image in the Weyl group of the
% element (22.8)(c)[abv], where $S$ here  equals $y_{0}$ there.  However,
% $y_{1}$ in (22.8)(b)[abv] belongs to ${^\vee}T$ in our setup, so it
% does not contrubute to the image in the Weyl group and only $p(S)$ matters.
We provide a summary of how to determine $S_{\ell}$ for each of the five
nilpotent orbits mentioned above.  The 
terminology of the headings follows \cite[Table 7.18]{HumphCC}.
\begin{description}
\item[Trivial ($1$)] 
In this case the image of $\psi_{|\mathrm{SL}_{2}}$ is trivial and we may
identify $\psi = \psi_{|W_{\mathbb{R}}}$, a tempered
L-parameter of $\mathrm{G}_{2}$.  In this case the Arthur packet is
equal to the L-packet of $\psi$.  As a tempered $L$-parameter with
dominant, regular and integral infinitesimal character, 
the L-packet $\Pi_{\psi}$ consists of discrete series \cite[Theorem 14.91,
  Proposition 8.22]{Knapp}.    The ${^\vee}K$-orbit $S_{\ell}$ corresponding  to
the L-parameter $\psi$, as outlined (\ref{lparam})-(\ref{lparambij}), 
must therefore have $p(S_{\ell})$ equal to the long Weyl group element
(\cite[proof of Lemma 3.4]{Langlands}).   Consulting Table
\ref{orbitdata} we see that  $\ell  = 9$ and so
$\Pi_{\psi} = \Pi_{S_{9}}^{\mathrm{mic}}$.
%  If $\pi \in \Pi_{\psi}$ is properly induced then it is induced from
%  either an intermediate parabolic or a Borel.  If $\pi$ is induced from
%  a Borel then its continuous parameter must be zero since it is
%  imaginary and the infinitesimal character is integral.  This forces
%  the infinitesimal character to be zero, which is not regular.  If
%  $\pi$ is induced from an intermediate parabolic corresponding to a
%  simple root $\alpha_{j}$ then there is an orthogonal root $\beta$
%  to $\alpha_{j}$.  As in the previous case, the continuous parameter
%  is zero and the infinitesimal character is then a
%  multiple of $\alpha_{j}$.  This infinitesimal character is
%  orthogonal to $\beta$ so is singular.  

\item[Long/short root ($A_{1}$/$\tilde{A}_{1}$)]  As an example,
  suppose $\psi(\mathrm{SL}_{2})$ is given by the
  $\mathrm{SL}_{2}$-triple of the  long  root
  $3{^\vee}\alpha_{1}+ 2 {^\vee}\alpha_{2}$.  As the group
  $\psi(W_{\mathbb{R}})$ centralizes $\psi(\mathrm{SL}_{2})$,
  $\psi(W_{\mathbb{R}})$  lies in the  
  group generated by the $\mathrm{SL}_{2}$-triple of the orthogonal short root
  ${^\vee}\alpha_{1}$.  
 The infinitesimal
  character of $\psi$ is integral  only if the infinitesimal character
  of $\psi_{|W_{\mathbb{R}}}$ is integral.  Since
  $\psi_{|W_{\mathbb{R}}}$ is also tempered, it must be
  a discrete parameter.  As a discrete parameter with values in a
  group isomorphic to
  $\mathrm{SL}_{2}$, it has infinitesimal character equal to 
$m \alpha_{1} $ where $m$ is a positive odd integer.  If we write
  $W_{\mathbb{R}} = \mathbb{C}^{\times} \cup j \mathbb{C}^{\times}$ as
  in Section \ref{compgroupsec},  the image of $\psi_{|W_{\mathbb{R}}}(j)$ in
  the Weyl group is the simple reflection $s_{1}$ generated by
  ${^\vee}\alpha_{1}$.  According to
\cite[(22.8)(c)]{ABV}, the infinitesimal character of $\psi$ is
$$\lambda = m \alpha_{1}  + {^\vee}(3{^\vee}\alpha_{1} + 2 {^\vee}\alpha_{2}) =
m \alpha_{1}  + (2\alpha_{2} + \alpha_{1})  \in {^\vee}\mathfrak{t}$$ 
which is not dominant.  Conjugation of $\psi$ by the Weyl group
element $s_{1}s_{2}$ produces an equivalent A-parameter with dominant
infinitesimal character $s_{1}s_{2} \cdot \lambda$.  The infinitesimal
character is regular for $m>1$.  Following the outline of
(\ref{lparam})-(\ref{lparambij}), the Weyl group element $p(S_{\ell})$
must be equal to 
the conjugate of $s_{1}$ by $s_{1}s_{2}$.  This corresponds to orbit
$S_{8}$, as seen in Table \ref{orbitdata}.  Consequently, $\Pi_{\psi}
= \Pi^{\mathrm{mic}}_{S_{8}}$.

Similar arguments show that when $\psi(\mathrm{SL}_{2})$ is given by
the $\mathrm{SL}_{2}$-triple of a short root, then $\Pi_{\psi} =
\Pi^{\mathrm{mic}}_{S_{7}}$.

\item[Subregular (${^\vee}\mathrm{G}_{2}(a_{1})$)] In this case the
  identity component of the centralizer of $\psi(\mathrm{SL}_{2})$
  consists of unipotent elements \cite[Table 7.12]{HumphCC}.  This
  forces $\psi_{|W_{\mathbb{R}}}(\mathbb{C}^{\times}) = 1$ and so
  $\psi_{|W_{\mathbb{R}}}$ has infinitesimal character zero.  The
  infinitesimal character of $\psi$ is then
\begin{equation}
\label{lampsi}
  \lambda = d\psi \left(0, \begin{bmatrix}1/2 & 0 \\ 0 &
    -1/2  \end{bmatrix}  \right) 
  \in {^\vee}\mathfrak{t}
\end{equation}
\cite[(22.8)(c)]{ABV}.
The first column of \cite[Table 7.12]{HumphCC} indicates that
${^\vee}\alpha_{1}(\lambda) = 0$.  Therefore $\lambda$ is singular and
$\Pi_{\psi}$ does not equal any of the micro-packets above.  

We will
need a  description of subregular A-parameters without restriction on
the infinitesimal character in Section
\ref{section:singularcase}, 
so we may as well do that now. The first column of \cite[Table
  7.12]{HumphCC} also indicates that ${^\vee}\alpha_{2}(2\lambda) = 2$.
This implies 
\begin{equation}
\label{lampsi1}
\lambda =   \alpha_{1} + 2 \alpha_{2} = (2\alpha_{1}+ 3\alpha_{2})/2 +
\alpha_{2}/2,
\end{equation}
a dominant element in ${^\vee}\mathfrak{t}$.
The roots $\alpha_{2}$ and $2\alpha_{1} + 3\alpha_{2}$ are orthogonal,
and their respective coroots are ${^\vee}\alpha_{2}$ and $2 \,
{^\vee}\alpha_{1}+ {^\vee}\alpha_{2}$.  There is a natural choice for
$\psi(1, \begin{bmatrix} 1 & 1 \\ 0
  & 1 \end{bmatrix})$, namely a product of nontrivial elements in the
root subgroups $U_{{^\vee}\alpha_{2}}$ and $U_{2\, {^\vee}\alpha_{1} +
{^\vee}\alpha_{2}}$. With this choice
\begin{equation}
\label{sl2im}
\psi(1, \mathrm{SL}_{2}) = \langle U_{\pm{^\vee\alpha_{2}}} \rangle
\langle U_{\pm( 2 \, {^\vee}\alpha_{1}+ {^\vee}\alpha_{2})} \rangle \cong \mathrm{SL}_{2}
\times_{\mu_{2}} \mathrm{SL}_{2} 
\end{equation}
(\emph{cf.}~Section \ref{groupK}).
 This being fixed, the only remaining freedom in choosing $\psi$ is in
 the value of the 
semisimple element
$\psi_{|W_{\mathbb{R}}}(j)$. As indicated above, the identity
component of the image
$\psi(W_{\mathbb{R}})$ consists of unipotent elements. Therefore the square of 
$\psi_{|W_{\mathbb{R}}}(j)$, being  both unipotent and 
semisimple,  is trivial.   This shows that
$\psi_{|W_{\mathbb{R}}}(j)$ can have order 
at most two.  Let $\psi_{a}$ be a subregular A-parameter for which
$(\psi_{a})_{|W_{\mathbb{R}}}(j) = 1$.   The only remaining
possibility  is a subregular A-parameter for which
$\psi_{|W_{\mathbb{R}}}(j)$ has order two.   This semisimple element
must then lie outside of the identity component of the centralizer of
$\psi(\mathrm{SL}_{2})$.  \cite[Table 7.18]{HumphCC} indicates that the
component group of the centralizer $\psi(1, \begin{bmatrix} 1 & 1 \\ 0
  & 1 \end{bmatrix})$ is  $\mathcal{S}_{3}$--the symmetric group 
of order six.  The elements of order two in $\mathcal{S}_{3}$ form a
single conjugacy 
class, so are all conjugate in the centralizer.  One of the elements
of order two is the unique nontrivial central element in
(\ref{sl2im}), which is $\exp( \uppi i \alpha_{2})$.   We define
$\psi_{b}$ to be the unique A-parameter with
$(\psi_{b})_{|W_{\mathbb{R}}}(j) = \exp( \uppi i 
\alpha_{2})$. This exhausts all subregular  A-parameters up to conjugacy.

\item[Regular (${^\vee}\mathrm{G}_{2}$)]  In this case the entire centralizer
  of $\psi(\mathrm{SL}_{2})$  consists of unipotent elements
  \cite[Tables 7.12 and 7.18]{HumphCC}.   This forces
  $\psi_{|W_{\mathbb{R}}}$ to be trivial. 
% as it consists of elements that are both semisimple and unipotent
 As in the subregular case, the
  infinitesimal character $\lambda$ of $\psi$ is given by (\ref{lampsi}).  The
  first column of \cite[Table 7.12]{HumphCC} then indicates that 
${^\vee}\alpha_{1}(2\lambda) = {^\vee}\alpha_{2}(2\lambda) = 2$.  
  \cite[Lemma A 13.3]{Humphreys} then tells us that
$\lambda = \rho$, which is regular, dominant and integral.
  Following
  (\ref{lparam})-(\ref{lparambij}) and \cite[(22.8)]{ABV}, the Weyl
  group element $p(S_{\ell})$ 
  is equal to the image of $\psi_{|W_{\mathbb{R}}}(j) = 1$ in the Weyl
  group.   As the image is trivial, Table \ref{orbitdata}
  indicates that $\Pi_{\psi} = \Pi_{S_{\ell}}^{\mathrm{mic}}$ for some
  $0 \leq \ell \leq 2$.  A careful examination of the terms in
  \cite[(22.8)(c)]{ABV}, together with \cite[(5.7)(c) and Proposition
    6.17]{ABV}, reveal 
  that $\Pi_{\psi} = \Pi_{S_{0}}^{\mathrm{mic}}$, where $S_{0}$
  corresponds to the Langlands parameter of the trivial
  representation of the split form of $\mathrm{G}_{2}$.  The details
  are left to the reader. 
% $y = y_{0} y_{1}$ where $y_{0} = 1$ and $y_{1} = exp(\pi i \rho) =
% e(\rho/2)$. (5.7)(c) implies $\phi_{\psi}}(j) = 1$.  Therefore
% $\phi_{\psi}$ is the Langlands parameter for the Langlands quotient
% of the standard principal series with infinitesimal character $rho$
% which is trivial on the component group of the inducing torus.  The
% Langlands quotient is the trivial representation.  In addition,
% $S_{0}=e(\rho/2)$  corresponds to the normalized torus part of the
% dual strong involution as in (14i) \cite{AVParameters}. This gives
% trivial unnormalized torus part and that's equal to $\phi_{\psi}(j)$.
  
  \end{description}
We note that, according to \cite[Corollary
  4.18]{arancibia_characteristic} the Arthur packets corresponding to
the orbits 
$S_0,\, S_7,\, S_8$ and $S_9$ above all fall within the class of packets
described by Adams and Johnson in \cite[Definition
  2.11]{Adams-Johnson}. Each such packet is associated with a
parabolic subgroup $Q \subset \mathrm{G}_{2}$, and to
each strong real form 
of the Levi subgroup of $Q$, there corresponds a unitary
character. The packets are then constructed by cohomologically
inducing these unitary characters to $\mathrm{G}_{2}$. That these
packets are of Adams-Johnson type may be computed using the
$\mathtt{Aq\_packet}$ command in the Atlas of Lie Groups and
Representations software. 

Using the Atlas software command
\texttt{is\_unitary}, we may verify 
that  each of the remaining micro-packets contains at
least one representation that is not unitary. In the next section, we
will see that some of these micro-packets are nonetheless useful in
describing Arthur packets with singular infinitesimal character. This
occurs for the singular Arthur packet associated to
the subregular nilpotent orbit discussed above. 
   
We conclude with the case that $\lambda$ is regular, but not integral. 
By (\ref{eq:IrrObj2}) and Theorem \ref{theo:CCsNonintegral},  the microlocal
multiplicity   
$\chi_S^{\mathrm{mic}}(P(\xi))$, for any
${}^{\vee}K(\lambda)$-orbit $S$ in $X(\lambda)$ and  $\xi\in
\Xi(X(\lambda), {}^{\vee}K(\lambda))$, 
is nonzero only when $\xi=(S,\underline{\mathbb{C}}_S)$.
Therefore, the micro-packet corresponding to $S$ 
reduces to its associated L-packet $\Pi_{S}$. This proves the following theorem.
\begin{theo}\label{theo:MicropacketsNonIntegral}
Suppose $\lambda \in {^\vee}\mathfrak{t}$ is integrally dominant,
regular, but non-integral. Then
$$
\Pi^{\mathrm{mic}}_S=\Pi_{S}.
$$
\end{theo}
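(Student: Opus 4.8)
The plan is to read the statement off from the characteristic cycle computation of Theorem~\ref{theo:CCsNonintegral}. First I would record that, in the regular non-integral setting, every irreducible ${}^{\vee}K(\lambda)$-equivariant local system on a ${}^{\vee}K(\lambda)$-orbit $S \subset X(\lambda)$ is the trivial one $\underline{\C}_S$ (this is the observation already used to produce the basis~\eqref{eq:IrrObj2}). Consequently the complete geometric parameters in $\Xi(X(\lambda),{}^{\vee}K(\lambda))$ are exactly the pairs $\xi_S = (S,\underline{\C}_S)$ with $S$ running over ${}^{\vee}K(\lambda)\setminus X(\lambda)$, and the assignment $\xi_S \mapsto S$ is a bijection between complete geometric parameters and orbits.

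Next I would unwind the definition of the microlocal multiplicity. Expanding $CC(P(\xi_{S'})) = \sum_{S}\chi^{\mathrm{mic}}_{S}(P(\xi_{S'}))\,[T_S]$ and invoking Theorem~\ref{theo:CCsNonintegral} to rewrite the left-hand side as the single conormal bundle $[T_{S'}]$, a comparison of coefficients gives
\[
\chi^{\mathrm{mic}}_{S}(P(\xi_{S'})) \;=\; \begin{cases} 1, & S = S',\\ 0, & S\neq S'.\end{cases}
\]
Substituting this into the defining formula~\eqref{eq:DefMicroPacket} of the micro-packet yields $\Pi^{\mathrm{mic}}_{S} = \{\pi(\xi_S)\}$, a single representation.

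Finally I would identify this singleton with the L-packet. By the enhanced local Langlands correspondence \cite[Theorem~10.4]{ABV}, the L-packet $\Pi_S$ attached to the orbit $S$ (equivalently, to its L-parameter) consists precisely of the representations $\pi(S,\mathcal{V})$ as $\mathcal{V}$ ranges over the irreducible ${}^{\vee}K(\lambda)$-equivariant local systems on $S$; since $\underline{\C}_S$ is the only such local system, $\Pi_S = \{\pi(\xi_S)\}$ as well, whence $\Pi^{\mathrm{mic}}_{S} = \Pi_S$. I do not expect any genuine obstacle here: the whole content of the theorem is carried by Theorem~\ref{theo:CCsNonintegral} together with the triviality of the relevant local systems, and the remaining steps are pure bookkeeping, exactly as anticipated in the paragraph preceding the statement.
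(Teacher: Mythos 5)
Your proposal is correct and follows the same route as the paper: the paper likewise combines the triviality of the local systems in~\eqref{eq:IrrObj2} with Theorem~\ref{theo:CCsNonintegral} to conclude that $\chi_S^{\mathrm{mic}}(P(\xi))\neq 0$ only for $\xi=(S,\underline{\C}_S)$, so that both the micro-packet and the L-packet reduce to the singleton $\{\pi(\xi_S)\}$. No gaps.
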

In more familiar notation, Theorem
\ref{theo:MicropacketsNonIntegral} asserts $\Pi^{\mathrm{mic}}_S=\Pi_{\varphi}$,
where $\varphi$ is the L-parameter corresponding to
the ${}^{\vee}K(\lambda)$-orbit $S$.

\section{Micro-packets with singular infinitesimal character} 
\label{section:singularcase}

To conclude our description of the micro-packets of $\mathrm{G}_{2}$,
we remove the regularity hypothesis on the infinitesimal character
$\lambda\in{}^{\vee}\mathfrak{t}$. In 
other words, the integrally dominant   (\ref{intdom}) element
$\lambda\in{}^{\vee}\mathfrak{t}$ may now satisfy
$${^\vee}\alpha(\lambda) = 0$$
for some roots ${^\vee}\alpha \in R({^\vee}\mathrm{G}_{2}, {^\vee}T)$.
To compute the micro-packets with singular infinitesimal character, we
extend the description of the microlocal multiplicities  
$\chi_S^{\mathrm{mic}}$ in Sections \ref{section:G2computations} and
\ref{section:G2computationsNon-integral} to incorporate  
 ${}^{\vee}K$-orbits $S$ of a \emph{generalized} flag variety of
${}^{\vee}\mathrm{G}_{2}$.  The main tool for this extension is the
\emph{translation principle}, which allows one to transfer results
from regular infinitesimal characters to those for singular
infinitesimal characters (\cite{Jantzen}, \cite[Chapter
  7]{greenbook}).
The reader is assumed to be somewhat familiar with the translation
principle. We will begin by presenting
the geometric perspective of the translation principle, as described
in \cite[Chapter 8]{ABV}.  Applying the translation principle to
Theorems \ref{theo:Micropackets} and 
\ref{theo:MicropacketsNonIntegral} will allow us to describe all
micro-packets of $\mathrm{G}_{2}$ with singular infinitesimal character. 

Let $\mathcal{O} \subset  {^\vee}\mathfrak{g}_2$ be the
${^\vee}\mathrm{G}_{2}$-orbit of  $\lambda 
\in {^\vee}\mathfrak{t}$ under the adjoint action.
The translation principle begins with the existence of a regular
element $\lambda' \in  {}^{\vee}\mathfrak{t}$, with 
\({^\vee}\mathrm{G}_2\)-orbit \(\mathcal{O}' \subset
  {^\vee}\mathfrak{g}_2\),  and a
  \emph{translation datum} \(\mathcal{T}\) from \(\mathcal{O}\) to
  \(\mathcal{O}'\) (\cite[Definition 8.6, Lemma 8.7]{ABV}). Two
  requirements of a translation datum are that
\begin{equation}\label{eq:lambdaprime}
\lambda - \lambda' \in X_*( {^\vee}T),
\end{equation}
and that for every root $\alpha \in
R({}^{\vee}\mathrm{G}_{2}, {}^{\vee}T)$
\begin{align}\label{eq:translationdatum1}
\alpha(\lambda) \in \{1,2,3, \ldots \}
\quad \Longrightarrow \quad 
\alpha(\lambda') \in \{1,2,3, \ldots\}.
\end{align}
Let \({}^{\vee}P(\lambda)\) be the parabolic subgroup of
\({}^{\vee}\mathrm{G}_{2}(\lambda)\) given by the root spaces of the roots
$\alpha \in R( {^\vee}\mathrm{G}_{2} {^\vee}T)$, satisfying 
%\label{eq:positiveroots}
$$\alpha(\lambda) \in \{0, 1,2,3, \ldots \}.$$
We define \({}^{\vee}P(\lambda')\) similarly \cite[(6.1),
  (6.2)]{ABV}. Due to equations \eqref{eq:lambdaprime} and
\eqref{eq:translationdatum1}, when \(\lambda'\) is integral,
\({}^{\vee}P(\lambda')\) is the Borel subgroup
\({^\vee}B \subset {}^{\vee}\mathrm{G}_{2}\) fixed at the beginning of Section
\ref{section:Korbitsflag}. Otherwise,  \({}^{\vee}P(\lambda')\) is one
of the Borel subgroups ${^\vee}B(\lambda')$
introduced in Section \ref{section:FlagNonIntegral}. In any case, we
have 
\[
{}^{\vee}P(\lambda') \subset {}^{\vee}P(\lambda).
\]
Let $\left({}^{\vee}\mathrm{G}_{2}(\lambda'),
{}^{\vee}K(\lambda')\right)$ be one of the pairs described in
Proposition \ref{Ilambdaorbits}. Recall from Section
\ref{section:FlagNonIntegral} that ${^\vee}\mathrm{G}_{2}(\lambda')$
is the centralizer of $e(\lambda')$ in $\mathrm{G}_{2}$, and that
${^\vee}K(\lambda')$ is the centralizer of an element $y \in
{^\vee}\mathrm{G}_{2}(\lambda')$ with $y^2 = e(\lambda')$. It follows
from  \eqref{eq:lambdaprime} and \eqref{eq:translationdatum1}
that $e(\lambda) = e(\lambda')$, which in turn implies that the
corresponding centralizers coincide, that is 
\[
{^\vee}\mathrm{G}_{2}(\lambda) = {^\vee}\mathrm{G}_{2}(\lambda') \quad
\text{and} \quad 
{^\vee}K(\lambda) = {^\vee}\mathrm{G}_{2}(\lambda)^y= {^\vee}K(\lambda').
\]
Despite these equalities, we will persist in  writing 
${^\vee}\mathrm{G}_{2}(\lambda)$ and
${^\vee}\mathrm{G}_{2}(\lambda')$, and likewise  
${^\vee}K(\lambda)$ and ${^\vee}K(\lambda')$, so that the reader can
easily distinguish when the theory pertains to the singular or to the regular
setting.  

In keeping with the notation of the previous sections, we denote by
\(X(\lambda)\) the generalized flag variety
\({}^{\vee}\mathrm{G}_{2}(\lambda) / {}^{\vee}P(\lambda)\) of
\({}^{\vee}\mathrm{G}_{2}(\lambda)\), and by \(X(\lambda')\) the flag
variety \({}^{\vee}\mathrm{G}_{2}(\lambda') /
{}^{\vee}P(\lambda')\). By \cite[Proposition 8.8]{ABV}, the surjection
\begin{align}\label{eq:ftee}
f_\mathcal{T}: \, X(\lambda') \, \longrightarrow \, X(\lambda)
\end{align}
defines a  smooth and proper morphism. In addition, the morphism
$f_\mathcal{T}$ has connected fibres of fixed dimension \(d\). 
%These facts will be
%used to define the translation functor and when comparing orbit dimensions.  
According to \cite[Proposition 7.15]{ABV}, the morphism
$f_\mathcal{T}$
% in \eqref{eq:ftee} 
induces an inclusion
%\label{eq:fstar}
$$f^{*}_{\mathcal{T}}: \Xi(X(\lambda), {}^{\vee}K(\lambda))
\hookrightarrow \Xi(X(\lambda'), {}^{\vee}K(\lambda))$$
of complete geometric parameters (\ref{geoparam}). We refer the reader
to \cite[page 95]{ABV} for a detailed description of the image 
\[
f_\mathcal{T}^{\ast}(\xi) = (f_\mathcal{T}^{\ast}S, f_\mathcal{T}^{\ast}\tau)
\]
of a complete geometric parameter \(\xi = (S, \tau) \in
\Xi(X(\lambda), {}^{\vee}K(\lambda))\). We only mention here that,
since there are only a finite number of
\({}^{\vee}K(\lambda')\)-orbits on \(X(\lambda')\), and
\(f^{\ast}_\mathcal{T}\) has connected fibres, there is a unique open
\({}^{\vee}K(\lambda')\)-orbit in \(f_{\mathcal{T}}^{-1}(S)\). The
orbit \(f_\mathcal{T}^{\ast}S\) is defined to be this unique open
orbit. 
%Following (\ref{orbitparams}), for each
%\({}^{\vee}K(\lambda')\)-orbit \(S'\) on \(X(\lambda')\), there exists
%an element \(y_{S'}^{} \in {}^{\vee}\mathrm{G}_{2}(\lambda')\)
%\textcolor{red}{ with \(y_{S'}^2 = 
%e(\lambda')\)}, such that 
%\[
%S' = {}^{\vee}K(\lambda') \, y_{S'}^{-1} \, {}^{\vee}P(\lambda').
%\]
%Similarly, for some \(y_{S} \in {}^{\vee}\mathrm{G}_{2}(\lambda)\)
%\textcolor{red}{with \(y_{S}^2 = e(\lambda)\)}, we have 
%\[
%S = {}^{\vee}K(\lambda') \,y_{S}^{-1}\, {}^{\vee}P(\lambda).
%\]
%Thus, the \({}^{\vee}K(\lambda')\)-orbits on \(X(\lambda')\) inside
%the preimage \(f_{\mathcal{T}}^{-1}(S)\) are those whose corresponding
%element \(y_{S'}^{}\) satisfies 
%\[
%y_{S}^{-1}y_{S'}^{} \in {}^{\vee}P(\lambda).
%\]
%The open orbit \(f_\mathcal{T}^{\ast}S\) is the one whose image under
%the map i%n \eqref{eq:pmap} has the highest length. 

We can now introduce the geometric version of the
translation functor.   
As defined in \cite[Proposition~8.8(b)]{ABV}, this functor maps the
category of \({}^{\vee}K(\lambda)\)–equivariant perverse sheaves on
\(X(\lambda)\) to the category of \({}^{\vee}K(\lambda')\)–equivariant
perverse sheaves on \(X(\lambda')\).   
It is given by the inverse image of the morphism
$f_\mathcal{T}^{\ast}$ in \eqref{eq:ftee}, shifted by the relative
dimension \(d\) 
\begin{equation}\label{eq:geometricTF}
f^{*}_{\mathcal{T}}[d]: \mathcal{P}\left(
X(\lambda),{}^{\vee}K(\lambda)\right) \longrightarrow \mathcal{P}\left(
X(\lambda'),{}^{\vee}K(\lambda')\right).
\end{equation}
This is a fully faithful exact functor, sending irreducible perverse
sheaves to irreducible perverse sheaves
\[
f^{\ast}_{\mathcal{T}}[d]\left(P(\xi)\right) =
P(f_\mathcal{T}^{\ast}(\xi)), \quad \xi \in  \Xi(X(\lambda), {}^{\vee}K(\lambda))
\]
 \cite[Proposition
  7.15(b)]{ABV}.
%These microlocal multiplicities behive well under the translation functor.
Another important property of the translation functor is that it
preserves the microlocal multiplicities. 
%along the \({}^{\vee}K(\lambda)\)-orbits of \(X(\lambda)\). 
Indeed, it is shown
in \cite[Proposition 20.1 (e)]{ABV} that for all \(\xi \in
\Xi(X(\lambda), {}^{\vee}K(\lambda))\) and for all
\({}^{\vee}K(\lambda)\)-orbits \(S\) in \(X(\lambda)\), we have 
\begin{align}
\begin{aligned}\label{eq:translationofchiV1}
\chi_{S}^{\mathrm{mic}}(P(\xi)) &=
\chi_{f^{\ast}_{\mathcal{T}}S}^{\mathrm{mic}}(f^{\ast}_{\mathcal{T}}P(\xi))
\\ 
&= \chi_{f^{\ast}_{\mathcal{T}}S}^{\mathrm{mic}}(P(f^{\ast}_{\mathcal{T}}\xi)).
\end{aligned}
\end{align}
As a consequence of  \eqref{eq:translationofchiV1}, we can write
$$CC(P(\xi)) = \sum_{S}
\chi_{f^{\ast}_{\mathcal{T}}S}^{\mathrm{mic}}(P(f^{\ast}_{\mathcal{T}}\xi))
\cdot \overline{T^{\ast}_{S}X(\lambda)}$$
(\emph{cf.}~(\ref{cc})).
In addition, by the definition of a micro-packet
\eqref{eq:DefMicroPacket}, we have 
\begin{eqnarray}\label{eq:singularmicpacketrelation}
\pi(\xi)\in\Pi_{S}^{\mathrm{mic}}
\quad \Longleftrightarrow\quad 
\pi(f^{*}_{\mathcal{T}}\xi)\in\Pi_{f^{*}_{\mathcal{T}}S}^{\mathrm{mic}}.
\end{eqnarray}
To complete the description of the micro-packets of
\(\mathrm{G}_{2}\), we need to relate the geometric translation
functor \eqref{eq:geometricTF} with its representation-theoretic
counterpart. The (Jantzen–Zuckerman) translation functor in
representation theory  may be regarded as a homomorphism 
%  \label{transfunct}
$$  \Psi_{\mathcal{T}}:\, K \Pi\left(\lambda',
  \mathrm{G}_{2}/\mathbb{R} \right)
  \longrightarrow K \Pi\left(\lambda,
  \mathrm{G}_{2}/\mathbb{R}\right),$$
from the Grothendieck group of  representations of pure real forms of
$\mathrm{G}_{2}$ with infinitesimal character~$\lambda'$, to that of
pure real forms of $\mathrm{G}_{2}$ with infinitesimal
character $\lambda$.  
%(\cite[(17.8j)]{AvLTV}). 
The functor $\Psi_{\mathcal{T}}$ is surjective (see, for example,
\cite[Corollary 17.9.8]{AvLTV}).  \cite[Proposition 16.4(b)]{ABV}
tells us that  for any complete geometric parameter \(\gamma \in
\Xi(X(\lambda'), {}^{\vee}K(\lambda'))\) the image
\(\Psi_{\mathcal{T}}(\pi(\gamma))\) is either irreducible or zero—the
former occurring if and only if \(\gamma = f^{\ast}_\mathcal{T}(\xi)\)
for some \(\xi \in \Xi(X(\lambda), {}^{\vee}K(\lambda))\).
%Alternatively, the image
%\(\Psi_{\mathcal{T}}(\pi(\gamma))\) is zero if and only if a root
%in the tau-invariant of \(P(\gamma)\), as in (\ref{eq:verticalP}), is
%orthogonal to \(\lambda\) \cite[Proposition 11.16]{ABV}.  
By \cite[Proposition 16.6]{ABV}, we have the identity
\begin{equation}\label{eq:SingularPimic}
\Psi_{\mathcal{T}} \left(\pi(f^{\ast}_\mathcal{T}(\xi)) \right) = \pi(\xi).
\end{equation}
An immediate consequence of this identity and the definition of the
L-packet $\Pi_{S}$ is
\begin{equation}
\label{lpackettrans}
\Pi_{S} = \left\{ \Psi_{\mathcal{T}}(\pi) : \pi \in
\Pi_{f_{\mathcal{T}}^{*} S}, \ \Psi_{\mathcal{T}}(\pi) \neq 0 \right\}.
\end{equation}

Identity (\ref{eq:SingularPimic}) applies to micro-packets as well.
Indeed, the equivalence of \eqref{eq:singularmicpacketrelation}
implies that the image of the micro-packet of
\(f^{\ast}_\mathcal{T}S\)  under the translation functor is equal to
the micro-packet of \(S\)  
\begin{equation}\label{eq:Singularetamic}
\Pi_{S}^{\mathrm{mic}}\ =\ \left\{\Psi_\mathcal{T}(\pi):\ \pi\in 
\Pi_{f^{\ast}_\mathcal{T}S}^{\mathrm{mic}},\ \Psi_\mathcal{T}(\pi)\neq 0 \right\}.
\end{equation}
The parallel statement for the strongly stable distributions is
\begin{align}
\label{singetamic}
\eta_{S}^{\mathrm{mic}}&=\sum_{\xi\in \Xi(X(\lambda),{}^{\vee}K(\lambda))}
e(\xi)(-1)^{\dim(S_\xi)-\dim(S)}\chi_{S}^{\mathrm{mic}}(P(\xi)) \,\pi(\xi)\\
\nonumber &=\sum_{\xi\in \Xi(X(\lambda),{}^{\vee}K(\lambda))}
e(\xi)(-1)^{\dim(f^{\ast}_{\mathcal{T}}S_\xi)-\dim(f^{\ast}_{\mathcal{T}}S)}
\chi_{f^{\ast}_{\mathcal{T}}S}^{\mathrm{mic}}(P(f^{\ast}_{\mathcal{T}}\xi)) \ 
\Psi_{\mathcal{T}}(\pi(f^{\ast}_\mathcal{T}(\xi))) \\
\nonumber &=\Psi_{\mathcal{T}}(\eta_{f^{\ast}_{\mathcal{T}}S}^{\mathrm{mic}}).
\end{align}

By Theorems \ref{theo:Micropackets} and
\ref{theo:MicropacketsNonIntegral}, we know how to compute
\(\Pi_{S'}^{\mathrm{mic}}\) for all \({}^{\vee}K(\lambda')\)-orbits
\(S'\) in \(X(\lambda')\).  Applying the translation functors as
above, the only difficulty in computing 
\(\Pi_{S}^{\mathrm{mic}}\) and \(\eta_{S}^{\mathrm{mic}}\) lies in
recognizing \(f^{\ast}_{\mathcal{T}}S\) among the 
\({}^{\vee}K(\lambda')\)-orbits in \(X(\lambda')\).
Identity (\ref{eq:SingularPimic}) converts the problem of recognizing
\(f^{\ast}_{\mathcal{T}}S\) among the  
\({}^{\vee}K(\lambda')\)-orbits into the equivalent
problem of computing the representation-theoretic
translation functor $\Psi_{\mathcal{T}}$.
The Atlas of Lie Groups and Representations software allows
one to compute  the image of an
irreducible or standard representation under $\Psi_{\mathcal{T}}$
using the commands \texttt{T\_irr} and \texttt{T\_std}.  One may 
use these commands to identify the orbits $f_\mathcal{T}^{\ast}S$.

One does not need software for these computations.  As an
example, we sketch the  computation of  $\Pi_\psi =
\Pi_{S_{\psi}}^{\mathrm{mic}}$ for $\psi$ an A-parameter of 
$\mathrm{G}_{2}$ whose restriction to $\mathrm{SL}_2$ is
determined by the subregular nilpotent orbit as presented on page
\pageref{lampsi}.  Following this presentation, there are two unipotent
A-parameters, $\psi_a$ and $\psi_b$, of $\mathrm{G}_{2}$ that satisfy 
$$(\psi_a)_{|\mathrm{SL}_2}= (\psi_b)_{|\mathrm{SL}_2}.$$
Here, $\psi_a$ denotes the A-parameter whose restriction to $W_\R$
is trivial,  and $\psi_b$ denotes the unique A-parameter such that 
$(\psi_b)_{|W_{\mathbb{R}}}(j) = \exp(\uppi i \alpha_2)$. 
%Moreover, by setting $\lambda$ as in \eqref{lampsi}, 
%we have $S_{\psi_a}=K(\lambda)P(\lambda)$, and from Table \ref{orbitdata} 
%we obtain $S_{\psi_b}=K(\lambda)\sigma_2^{-1}P(\lambda)$, where
%$\sigma_2$ is as in \eqref{titsrep}. 
Let $\lambda$ be as in \eqref{lampsi1}. We may then choose  
$\lambda' = \rho$, the half-sum of the positive roots in
$R(\mathrm{G}_{2},T)$, and use the translation datum
$\mathcal{T}$ from $\mathcal{O}$ to $\mathcal{O}'$. The parabolic
subgroup ${^\vee}P(\lambda')$ is the Borel subgroup ${^\vee}B$.   Since
${^\vee}\alpha_{1}(\lambda) = 0$, the
parabolic subgroup ${^\vee}P(\lambda)$ is given by
\[
{}^{\vee}P(\lambda)={}^{\vee}B \sqcup {}^{\vee}B
\sigma_{1} {}^{\vee}B = \langle U_{\pm {^\vee}\alpha_{1}} \rangle \,  {^\vee}B,
\]  
where $\sigma_{1}$ is the Tits representative (\ref{titsrep}) of the reflection
of ${^\vee}\alpha_{1}$, and 
$U_{{^\vee}\alpha_{1}}$ is the root subgroup of ${^\vee}\alpha_{1}$.
The prescriptions of \cite[(22.8)(c), (5.7)(c) and Proposition 6.17]{ABV}
imply that 
$$S_{\psi_a}={}^{\vee}K(\lambda)\,  {}^{\vee}P(\lambda) =
{}^{\vee}K(\lambda)\,  \langle U_{\pm {^\vee}\alpha_{1}} \rangle \,
{^\vee}B$$ 
The column labelled with $y_{j}$ in Table \ref{orbitdata} indicates
that  $f_\mathcal{T}(S_0)=f_\mathcal{T}(S_2)=S_{\psi_a}$. Moreover,
since $g_{1} \in \langle U_{\pm {^\vee}\alpha_{1}} \rangle$ 
%is obtained via Cayley transform through ${}^{\vee}\alpha_1$ from
%$S_0$ (and $S_1$),    
the table shows that $f_\mathcal{T}(S_4)=S_{\psi_a}$.  In summary,
Table \ref{orbitdata} shows that
\[
f^{-1}_{\mathcal{T}}(S_{\psi_a}) = S_0 \cup S_2 \cup S_4 \quad \text{and} \quad
f^{\ast}_{\mathcal{T}}(S_{\psi_a}) = S_4.
\]
We may now substitute the information from Theorem
\ref{theo:Micropackets}   into equations 
(\ref{eq:Singularetamic}) and (\ref{singetamic}) to
conclude that 
\[
\Pi_{S_{\psi_a}}^{\mathrm{mic}} =
\{\pi(f^{\ast}_{\mathcal{T}}(\xi_8)),
\pi(f^{\ast}_{\mathcal{T}}(\xi_4))\}  
\quad \text{and} \quad
\eta_{S_{\psi_a}}^{\mathrm{mic}} = \pi(f^{\ast}_{\mathcal{T}}(\xi_8))
+ \pi(f^{\ast}_{\mathcal{T}}(\xi_4)). 
\]
Similar considerations with $\psi_{b}$ bring
to light that
\begin{align*}
S_{\psi_b}  = {}^{\vee}K(\lambda)\sigma_{2}^{-1}  {}^{\vee}P(\lambda)
& =  {}^{\vee}K(\lambda) \sigma_2^{-1} \,  \langle U_{\pm
  {^\vee}\alpha_{1}} \rangle \, {}^{\vee}B,\\
f^{-1}_{\mathcal{T}}(S_{\psi_b}) = S_1, & \quad
f^{\ast}_{\mathcal{T}}(S_{\psi_b}) = S_1.
\end{align*}
Substituting Theorem \ref{theo:Micropackets} into
(\ref{eq:Singularetamic}) and (\ref{singetamic}), we obtain
\[
\Pi_{S_{\psi_b}}^{\mathrm{mic}} =
\{\pi(f^{\ast}_{\mathcal{T}}(\xi_{10})),
\pi(f^{\ast}_{\mathcal{T}}(\xi_6)),
\pi(f^{\ast}_{\mathcal{T}}(\xi_1))\}  
\quad \text{and} \quad
\eta_{S_{\psi_b}}^{\mathrm{mic}} =
\pi(f^{\ast}_{\mathcal{T}}(\xi_{10})) +
2\pi(f^{\ast}_{\mathcal{T}}(\xi_6)) +
\pi(f^{\ast}_{\mathcal{T}}(\xi_1)). 
\]
%By the description of the tau-invariant in (\ref{eq:tau}), the image
%of any of these irreducible representations under
%\(\Psi_{\mathcal{T}}\) is nonzero. 
The stable sums \(\eta_{S_{\psi_a}}^{\mathrm{mic}}\) and
\(\eta_{S_{\psi_b}}^{\mathrm{mic}}\) are the ones appearing in
\cite[Theorem 18.10]{VoganG2}.  However,  Vogan uses entirely different
methods.

We finish this section by supposing that  $\lambda$ is
non-integral. In this case it follows from
\eqref{eq:lambdaprime} that $\lambda'$ is also non-integral.
By Theorem \ref{theo:MicropacketsNonIntegral}, 
the micro-packets with infinitesimal character 
$\lambda'$ are equal to their corresponding
L-packets.  According to equations (\ref{lpackettrans}) and
\eqref{eq:Singularetamic}, 
the same can be said for any micro-packet $\Pi_S^{\mathrm{mic}}$
for $S$ a \({}^{\vee}K(\lambda)\)-orbit in $X(\lambda)$.

\section{Kashiwara's local index theorem}
\label{KashiwaraSection}

We continue working in the setting of
Section~\ref{section:singularcase}. As an application of
Theorem~\ref{theo:CCs}, we provide an explicit 
description of Kashiwara's local index formula
(\ref{Kashiwaraformula}) stated in the introduction.
%for any irreducible
%perverse sheaf on a generalized flag variety of
%${}^{\vee}\mathrm{G}_2(\lambda)$.

%That is, we fix \(\lambda \in {}^{\vee}\mathfrak{t}\), let
%\({}^{\vee}P(\lambda)\) denote the parabolic subgroup of
%\({}^{\vee}G_2\) corresponding to the family of roots of
%\({}^{\vee}T\) in \({}^{\vee}G_2\) satisfying
%Equation~\eqref{eq:positiveroots}, and consider the generalized flag
%variety   
%\[
%X := {}^{\vee}G_2(\lambda)/{}^{\vee}P(\lambda).
%\]
%Finally, we define \( {}^{\vee}K \) according to
%Proposition~\ref{Ilambdaorbits}. 

We give more details to the 
notion of \emph{local multiplicity} along a
${}^{\vee}K(\lambda)$-orbit in \( X(\lambda) \)  as in
\cite[Definition 23.6]{ABV}.    
To this end, fix a ${}^{\vee}K(\lambda)$-orbit \( S \subset X(\lambda)
\) and a point \( x \in S \).  For any constructible sheaf \( \mathcal{C} \) of
finite-dimensional complex vector spaces on \( X(\lambda) \),
we denote by \( \mathcal{C}_x \) the stalk of \( \mathcal{C} \) at \(
x \). 
Let $\mathcal{C}(X(\lambda),{}^{\vee}K(\lambda))$ be the category of
${}^{\vee}K(\lambda)$-equivariant constructible sheaves on
$X(\lambda)$. The map  
\[
\chi_{S}^{\mathrm{loc}} : \mathcal{C}(X(\lambda),{}^{\vee}K(\lambda))
\longrightarrow 
\mathbb{N},
\] 
defined by $\chi_{S}^{\mathrm{loc}}( \mathcal{C}) = \dim(\mathcal{C}_x)$,
is independent of the choice of $x \in S$, and is additive with respect
to short exact sequences.  It extends to a $\mathbb{Z}$-linear map
$$\chi_{S}^{\mathrm{loc}} : \mathscr{K}(X(\lambda),{}^{\vee}K(\lambda))
\longrightarrow \mathbb{Z},$$
which we refer to as the \emph{local multiplicity} along $S$
\cite[Definition 1.28, Lemma 7.8]{ABV}. 
%Any irreducible ${^\vee}K(\lambda)$-equivariant perverse sheaf \( P\)
%on $X(\lambda)$ may be regarded as an element in 
%\(\mathscr{K}(X(\lambda),
%{}^{\vee}K(\lambda)) \) and so $\chi_{S}^{\mathrm{loc}}(P)$ is defined
%in this sense. 
% the local multiplicity along \( S \) is also
%known as the \emph{local Euler characteristic} of \( P \) at \( S \),   
%and is given by the formula 
%\label{eq:local-multiplicity}
%$$\chi_{S}^{\mathrm{loc}}(P) = \sum_i (-1)^i \dim\left((\mathrm{H}^i
%P)_x\right).$$ 
%The local multiplicities can be effectively computed using the Atlas
%of Lie Groups and Representations software. 

There is a close relationship between the values of
$\chi_{S}^{\mathrm{loc}}$ on irreducible perverse sheaves and the
values on
irreducible constructible sheaves.  To see this, let $\xi = (S_\xi,
\mathcal{V}_\xi) \in \Xi(X(\lambda),{}^{\vee}K(\lambda))$ be a
complete geometric parameter.  
%as in~(\ref{eq:complete-geoparam}). 
Consider the irreducible constructible sheaf
\[
\mu(\xi) = i_\ast j_! \mathcal{V}_\xi,
\]
where $j\colon S \hookrightarrow \overline{S}$ is the inclusion
of $S$ into its closure, and $i\colon \overline{S} \hookrightarrow
X(\lambda)$ is the embedding of $\overline{S}$ into $X$. 
%$i$ and $j$ are the inclusion maps defined in~\eqref{eq:i-map}
%and~\eqref{eq:j-map}, respectively.  
By \cite[Corollary~23.3(a)-(b)]{ABV}, we have
\[
\chi_S^{\mathrm{loc}}(\mu(\xi)) = \dim( (\mathcal{V}_\xi)_x)=1,
\]
where $x\in S_\xi$.
% Since the set $\{\mu(\xi) : \xi \in
%\Xi(X(\lambda),{}^{\vee}K(\lambda))\}$ forms a basis of the Grothendieck group
%$\mathscr{K}(X,{}^{\vee}K)$, 
By~\cite[(7.11)(a)]{ABV},  any
irreducible perverse sheaf $P(\gamma)$,  $\gamma \in
\Xi(X(\lambda),{}^{\vee}K(\lambda))$ decomposes as a
$\mathbb{Z}$-linear combination
%\label{eq:characterformula}
$$P(\gamma) = \sum_{\xi \in \Xi(X(\lambda),{}^{\vee}K(\lambda))} (-1)^{\dim(S_\xi)}
\ c_g(\xi,\gamma) \, \mu(\xi) $$
in $\mathscr{K}(X(\lambda), {}^{\vee}K(\lambda))$.  
Applying $\chi_S^{\mathrm{loc}}$ to this equation, we see that
$$\chi_S^{\mathrm{loc}}(P(\gamma)) = \sum_{\xi \in \Xi(X(\lambda), {}^{\vee}K(\lambda))}
(-1)^{\dim(S_\xi)} \ c_g(\xi,\gamma)\, \chi_{S}^{\mathrm{loc}}(\mu(\xi)).$$ 
\cite[Theorem~16.19]{ABV} tells us that the coefficients
$c_g(\xi,\gamma)$ are given by Kazhdan–Lusztig–Vogan polynomials,
and the Atlas of Lie Groups and Representations software can compute
them explicitly. This 
provides an effective method for evaluating
$\chi_S^{\mathrm{loc}}(P(\gamma))$.
% In fact the Atlas
%software can do even more for us.  It is capable of computing the full
%expression in Equation~\eqref{eq:characterformula}. 

The local multiplicity map $\chi_{S}^{\mathrm{loc}}$ is related to the
microlocal multiplicity map $\chi_{S}^{\mathrm{mic}}$ through 
Kashiwara's local index formula (\ref{Kashiwaraformula}) which we
state again here.
%More precisely, by the theorem at the end 
%of~\cite[Section 2]{Kashiwara73}, the local and microlocal multiplicities are
%related as follows.   
For every \( {}^{\vee}K(\lambda) \)-orbit \( S' \subset \overline{S}
\), there exists an integer \( a(S, S') \) such that
% for every \( {}^{\vee}K(\lambda) \)-equivariant perverse sheaf on \(
% X(\lambda) \), we have 
\begin{equation}\label{eq:c(SS)'}
\chi_{S}^{\mathrm{loc}}(P(\gamma)) = \sum_{S' \subset \overline{S}}
(-1)^{\dim(S')} \, a(S, S') \ \chi_{S'}^{\mathrm{mic}}(P(\gamma)). 
\end{equation}  
In \cite[Section~3]{MacPherson74}  MacPherson points out
that the local Euler obstructions $a(S, S')$ may be used to determine   the
singularity  of $\overline{S}$ at a point $x \in S'$.
%MacPherson in his construction of 
%Chern classes for singular varieties
We also mention that the inverse of the
relationship in \eqref{eq:c(SS)'} appears in   
\cite[Section 8]{Ginsburg86} as the index formula of Dubson-Kashiwara.
This index formula 
expresses $\chi_{S}^{\mathrm{mic}}$ in terms of the various
$\chi_{S'}^{\mathrm{loc}}$.  There, the matrix entries are
denoted by $c_{S,S'}$, and these entries form the inverse of the matrix
with entries $(-1)^{\dim(S)} a(S',S)$.

In what follows, we explain how to compute them in the specific
setting of our generalized flag variety \( X(\lambda) \). 
Suppose first that \( \lambda \in {^\vee}\mathfrak{t} \) is regular
and integral \eqref{regint}.   
Then \( X(\lambda) = X \) is the full flag variety of \(
{}^{\vee}\mathrm{G}_2 \).   
We fix \( {}^{\vee}K \) as in Section~\ref{Korbits}. Since the
left-hand side of~\eqref{eq:c(SS)'} can be computed using
the Atlas software, and  Theorem~\ref{theo:CCs} gives
the values  $\chi_{S'}^{\mathrm{mic}}(P(\gamma))$ on the right, we
can compute all the local Euler 
obstructions $a(S, S')$ by a simple recursive argument on the
dimension of the ${}^{\vee}K$-orbit $S$. 
We record these values in the form of a matrix, where the entry in
position $(i,j)$ is the local Euler obstruction $a(S_i, S_j)$. 
\[
\begin{bmatrix}
1 & 0 & 0 & 1 & 1 & 1 & 1 & 0 & 0 & 1 \\
0 & 1 & 0 & 1 & 0 & 1 & -1 & 0 & 2 & 1 \\
0 & 0 & 1 & 0 & 1 & 1 & 1 & 2 & 0 & 1 \\
0 & 0 & 0 & 1 & 0 & 1 & 1 & 0 & 1 & 1 \\
0 & 0 & 0 & 0 & 1 & 1 & 1 & 1 & 0 & 1 \\
0 & 0 & 0 & 0 & 0 & 1 & 0 & 1 & 1 & 1 \\
0 & 0 & 0 & 0 & 0 & 0 & 1 & 1 & 1 & 1 \\
0 & 0 & 0 & 0 & 0 & 0 & 0 & 1 & 0 & 1 \\
0 & 0 & 0 & 0 & 0 & 0 & 0 & 0 & 1 & 1 \\
0 & 0 & 0 & 0 & 0 & 0 & 0 & 0 & 0 & 1 \\
\end{bmatrix} 
\]
Examining columns~6, 7, and~8, we deduce
from~\cite[Example~1]{Kashiwara73} that the ${}^{\vee}K$-orbit
closures $\overline{S_6}$ and $\overline{S_8}$ have a singularity at
$S_1$, and that the closure $\overline{S_7}$ has a singularity at
$S_2$. 
All other ${}^{\vee}K$-orbits have smooth closures. Indeed, orbits $S_{0}$,
$S_{1}$  and $S_{2}$ are closed homogeneous spaces, and thus
smooth. The open orbit $S_{9}$ has the smooth variety $X$ as its closure. 
We cannot determine whether orbit closures
$\overline{S}_{3}$, $\overline{S}_{4}$ and $\overline{S}_{5}$ are
smooth or not using \cite[Example~1]{Kashiwara73}.  
However, we have verified that these orbit closures are smooth
by using Magma software; we omit the details. 

Suppose now that \( \lambda \in {^\vee}\mathfrak{t} \) is regular, but not
integral.  The possibilities for the pairs
\(\left({}^{\vee}\mathrm{G}_2(\lambda), {}^{\vee}K(\lambda)\right)\)
are listed in Proposition~\ref{Ilambdaorbits}.  When
\(\left({}^{\vee}\mathrm{G}_2(\lambda), {}^{\vee}K(\lambda)\right)\)
is not isomorphic to  \((\mathrm{SL}_3, \mathrm{GL}_2)\), the orbits
are either open or closed in $X(\lambda)$.  Hence, they are smooth.
For the pair \( (\mathrm{SL}_3, \mathrm{GL}_2) \), the smoothness of
the \( {}^{\vee}K(\lambda) \)-orbits follows from~\cite[Theorem
  7.3.3]{McGovern2023}.  
It follows from \cite[Example~1]{Kashiwara73} that
all local Euler obstructions are equal to one
in these cases. 

Suppose finally that $\lambda \in {^\vee}\mathfrak{t}$ is singular. 
Once again the
Atlas software can be used to compute the local
multiplicities  on the left-hand side of equation~\eqref{eq:c(SS)'}.
We know the microlocal multiplicities on the right-hand side of
equation (\ref{eq:c(SS)'}) by  Theorem \ref{theo:CCs}, Theorem
\ref{theo:CCsNonintegral} and equation~\eqref{eq:translationofchiV1}.
In this manner we may compute the local Euler obstructions $a(S,
S')$.

As an example, we consider $\lambda$ as in equation~\eqref{lampsi1}. 
This is the infinitesimal character corresponding to the subregular
nilpotent orbit  
which we studied at the end of Section \ref{section:singularcase}. 
As we saw there,
% the parabolic ${}^{\vee}P(\lambda)$ is given by 
%${}^{\vee}B\sqcup {}^{\vee}B \sigma_{1} {}^{\vee}B$, and 
taking
\(\lambda' = \rho\) furnishes  a translation datum
\(\mathcal{T}\) from the  ${}^{\vee}\mathrm{G}_2$-orbit of \(\lambda\)
to the ${}^{\vee}\mathrm{G}_2$-orbit of \(\lambda'\).   
Recall the set \(\{\xi_i : 0 \leq i \leq 11\}\) of complete geometric
parameters for the flag variety $X$ of ${}^{\vee}\mathrm{G}_2$ listed in
\eqref{eq:IrrObj1}.  
The
${^\vee}K(\lambda)$-orbit structure on $X(\lambda)$ is given by the
image of the ${^\vee}K(\lambda)$-orbit structure on $X$ under (\ref{eq:ftee}).
There are five ${}^{\vee}K(\lambda)$-orbits on $X(\lambda)$ of
${}^{\vee}\mathrm{G}_2$, which we denote by $Q_i$ for $0 \leq i \leq 4$.  
Among these orbits, $Q_0$ and $Q_1$ are closed and are equal to
$S_{\psi_a}$ and $S_{\psi_b}$ respectively.
The orbit $Q_4$ is open,  
and the closure relations are given as follows
\[
Q_0, Q_1 \subset \overline{Q_2} \subset \overline{Q_3} \subset
\overline{Q_4} = X(\lambda).
\] 
%are encoded in the following Hasse diagram of the Bruhat order.
For orbits $Q_i$ with $0 \leq i \leq 3$, the constant sheaf
\(\underline{\mathbb{C}}_{Q_i}\) is the only irreducible
${}^{\vee}K(\lambda)$-equivariant local system supported on \(Q_i\).   
In orbit $Q_4$, in addition to \(\underline{\mathbb{C}}_{Q_4}\), there
exists a non-trivial local system which we denote by
\(\mathcal{L}_5\).   
In short, there are exactly six complete geometric parameters for
$X(\lambda)$
\[
\gamma_i = (Q_i, \underline{\mathbb{C}}_{Q_i}) \quad \text{for } 0
\leq i \leq 3, \quad\gamma_4 = (Q_4, \underline{\mathbb{C}}_{Q_4}),
\quad  \text{and} \quad \gamma_5 = (Q_4, \mathcal{L}_5). 
\]
 
Using the Atlas software command \texttt{T\_irr} in combination with
(\ref{eq:SingularPimic}), 
we obtain  
\begin{align*}
f_\mathcal{T}^{\ast}(\gamma_0)=\xi_1,\ 
f_\mathcal{T}^{\ast}(\gamma_1)=\xi_4,\ 
f_\mathcal{T}^{\ast}(\gamma_2)=\xi_6,\ 
f_\mathcal{T}^{\ast}(\gamma_3)=\xi_8,\ 
f_\mathcal{T}^{\ast}(\gamma_4)=\xi_9,\ 
f_\mathcal{T}^{\ast}(\gamma_5)=\xi_{10}.
\end{align*}
By Theorem~\ref{theo:CCs} and equation~\eqref{eq:translationofchiV1},
we deduce that 
$
CC(P(\gamma_i)) = \overline{T_{Q_i}^{\ast}X} \quad \text{for } i = 0, 1, 4,
$
and that 
\begin{align*}
CC(P(\gamma_{2})) &= 2\cdot \overline{T_{Q_0}^{\ast}X} +
\overline{T_{Q_2}^{\ast}X}, \\ 
CC(P(\gamma_3))   &= \overline{T_{Q_1}^{\ast}X} +
\overline{T_{Q_2}^{\ast}X} + \overline{T_{Q_3}^{\ast}X}, \\ 
CC(P(\gamma_5))   &= \overline{T_{Q_0}^{\ast}X} +
\overline{T_{Q_2}^{\ast}X} + \overline{T_{Q_3}^{\ast}X} +
\overline{T_{Q_4}^{\ast}X}. 
\end{align*} 
From these equations we can read off the values of $\chi_{Q_{j}}^{\mathrm{mic}}$.
Finally, using the Atlas software to compute the local
multiplicities in  equation~\eqref{eq:c(SS)'} we arrive to the
matrix
\[
\begin{bmatrix}
1 & 0 & 1 & 1 & 1 \\
0 & 1 & -1 & 1 & 1 \\
0 & 0 &  1& 2 & 1 \\
0 & 0 &  0& 1 & 1 \\
0 & 0 & 0 & 0 & 1 \\
\end{bmatrix}
\]
where the entry 
in position \((i, j)\) is the local Euler obstruction \(a(Q_i, Q_j)\).

%\bibliographystyle{alpha}
%\bibliography{reference}

\end{document}